\newcommand{\comment}[1]{}
\numberwithin{equation}{section}
\newtheorem{remark}{Remark}[section]
\newtheorem{theorem}{Theorem}[section]
\newtheorem{lemma}{Lemma}[section]
\newtheorem{cor}{Corollary}[section]
\theoremstyle{definition}
\newtheorem{definition}{Definition}[section]
\newtheorem{result}{Result}[section]
\DeclareMathOperator{\Tr}{Tr}
\newcommand{\beq}{\begin{eqnarray}}
\newcommand{\eeq}{\end{eqnarray}}
\newcommand{\ben}{\begin{eqnarray*}}
\newcommand{\een}{\end{eqnarray*}}
\begin{document}
\def\shorttitle{Wigner matrix}
 \def\shortauthors{A. Bose, K. Saha, A. Sen, P. Sen}
\title{\textbf{\Large \sc
\Large{Random matrices with independent entries: beyond non-crossing partitions} 
}}\small

 \author{
 \parbox[t]{0.20
\textwidth}{{\sc Arup Bose}
 \thanks{Statistics and Mathematics Unit, Indian Statistical Institute, 203 B.T. Road, Kolkata 700108, India. email: bosearu@gmail.com. 
Research  supported by J.C. Bose National Fellowship, Department of Science and Technology, Govt. of India.}}
\parbox[t]{0.25\textwidth}{{\sc Koushik Saha}
\thanks{Department of Mathematics, Indian Institute of Technology Bombay, Mumbai,  India. email: koushik.saha@iitb.ac.in.
Research  supported by MATRICS Grant of Science \& Engineering Research Board, Department of Science and Technology, Govt. of India.}}
\parbox[t]{0.25\textwidth}{{\sc Arusharka Sen}
\thanks{Department of Mathematics and Statistics,  Concordia University, 1455 Boulevard de Maisonneuve O, Montréal, QC H3G 1M8, CANADA. email: arusharka.sen@concordia.edu}}
 \parbox[t]{0.25\textwidth}{{\sc Priyanka Sen}
 \thanks{Statistics and Mathematics  Unit, Indian Statistical Institute, 203 B.T. Road, Kolkata 700108, INDIA. email: priyankasen0702@gmail.com}}
}

\date{\today}  
\maketitle
\begin{abstract}
The scaled standard Wigner matrix (symmetric with mean zero, variance one i.i.d. entries), and its limiting eigenvalue distribution, namely the semi-circular distribution, has attracted much attention. 
The $2k$th moment of the limit equals the number of non-crossing pair-partitions  of the set $\{1, 2, \ldots, 2k\}$. There are several extensions of this result in the literature. In this paper we consider a unifying extension which also yields additional results. 

Suppose $W_n$ is an $n\times n$ symmetric matrix  where the entries are independently distributed.
We show that under suitable assumptions on the entries, the limiting spectral distribution exists in probability or almost surely. The moments of the limit can be described through a set of partitions which in general is larger than the set of non-crossing pair-partitions.
This set gives rise to interesting enumerative combinatorial problems. 

Several existing limiting spectral distribution results follow from our results. These include results on the standard Wigner matrix, the adjacency matrix of a sparse homogeneous Erd\H{o}s-R\'{e}nyi graph, heavy tailed Wigner matrix, some banded Wigner matrices, and Wigner matrices with variance profile. Some new results on these models and their extensions also follow from our main results. 


\end{abstract}
\vskip 5pt
\noindent \textbf{Key words and phrases.} Empirical and limiting spectral distribution, Wigner matrix, semi-circular  distribution, non-crossing partition, special symmetric partition, cumulant and free cumulant, coloured rooted tree, graphon.  
\vskip 5pt
\noindent \textbf{AMS 2010 Subject Classifications.} Primary 60B20; Secondary  60B10.
\medskip

%
%
%
%
%
%
%
%
%
	%
	
\section{Introduction}\label{introduction}
Suppose $A_n$ is an $n\times n$ real \textit{symmetric} random matrix with (real) eigenvalues $\lambda_1,\lambda_2,\ldots,\lambda_n$. Its \textit{empirical spectral distribution} or measure (ESD) is the random probability measure:
\begin{align*}
\mu_{A_n} = \frac{1}{n} \sum_{i=1}^{n}  \delta_{\lambda_i},
\end{align*}
where $\delta_x$ is the Dirac measure at $x$. The expected empirical spectral distribution (EESD) is the corresponding expected empirical measure. 

The notions of convergence used for the ESD are (i) weak convergence of the EESD, and (ii) weak convergence of the ESD, the latter being either in probability or almost surely. The limits are identical when the latter limits are non-random. In any case, any of these limits will be referred to as the \textit{limiting spectral distribution} (LSD) of $\{A_n\}$. 

An $n\times n$ real symmetric random matrix 
\begin{align*}
W_n =  \begin{bmatrix}
x_{11} & x_{1 2} & x_{1 3} &\cdots & x_{1 (n-1)} & x_{1n}\\
x_{12} & x_{22} & x_{23} &\cdots & x_{2(n-1)} & x_{2n}\\ 
          \vdots  & \vdots & \vdots & \vdots & \vdots & \vdots\\
x_{1n} & x_{2n} & x_{3n} &\cdots & x_{(n-1)n} & x_{n n}          
\end{bmatrix},
\end{align*}
is called a Wigner matrix where it is often assumed that $\{x_{ij}\ ;1\leq i\leq j\leq n\}$ are independent and identically distributed (i.i.d.) random variables.
We mention five specific existing results for Wigner matrices under various assumptions on the entries. 
Some other results are quoted later in Section  \ref{discussion}. The first two are for fully i.i.d.~entries. For any set $A$, we use $|A|$ to denote the number of elements of $A$. For any probability measure $\mu$, $\beta_k(\mu)$ will denote the $k$th moment of $\mu$.
Let $NC(k)$ denote the set of non-crossing partitions on the set $\{1,2,\ldots, k\}$. Let $NC_2(2k)$ denotes the set of non-crossing pair-partitions of $\{1, 2, \ldots, 2k\}$.

See Theorem 2.1.3 in \citep{bose2018patterned} for a detailed proof and a short history of the precursors of the following result.
\begin{result}\label{result:semi-circle} (Standardized i.i.d. entries) Suppose that the entries $\{x_{ij}\ ;  i, j \geq 1, \ i  \leq j\}$ of $W_n$ are  i.i.d. 
with mean 0 and variance 1. Then, as $n \to \infty$, the almost sure LSD of $W_n/\sqrt{n}$ is the standard semi-circular distribution. This distribution, say $\mu_s$, has the following density
$$f(x)=\left\{
\begin{array}{cc}
\frac{1}{2\pi}\sqrt{4-x^2}&\mbox{if}\ x\in[-2,\ 2],\\
0& \mbox{otherwise}.
\end{array}
\right.$$
\end{result}
It is well-known that 
$$\beta_k(\mu_{s})= \left\{
\begin{array}{cc}
0 &\mbox{if}\ k \ \text{is odd},\\
\frac{2}{(k+1)} {k \choose {\frac{k}{2}}}= |NC_2(k)| & \mbox{if}\ k \ \mbox{is even}.
\end{array}
\right.$$

The next result deals with the case where the entries are heavy tailed. 
\begin{result}\label{Heavy-tailed entries, BenArous}(Theorem 1.1, \citep{arous2008spectrum})
Suppose the entries $\{x_{ij}\ ;i \leq j\}$ of $W_n$ are i.i.d. and satisfy $P\{|x_{ij}|> u\}= u^{-\alpha}L(u)$ as $u \to \infty$  where $L(\cdot)$ is  slowly varying and $\alpha \in (0,2)$. Also $\displaystyle \lim_{u \rightarrow \infty} \frac{\mathbb{P}[x_{ij}>u]}{\mathbb{P}[|x_{ij}|>u]}=\theta \in [0,1]$. 
Let $a_n=\inf\{u: P[|x_{ij}|> u]\leq 1/n\}$.
Then the ESD  of $\frac{W_{n}}{a_{n}}$ converges to a probability measure $\mu_{\alpha}$ in probability.   
\end{result}
A natural question is what happens if we retain the symmetry of the matrix but do not have i.i.d. entries and/or the distribution of the entries change as $n$ changes.  \textit{Hence we consider matrices $W_n=(x_{ij,n})$ where  $x_{ij,n}=x_{ji,n}$ for all $i, j$. For simplicity we drop the suffix $n$}. We quote three results  this scenario. 
The first result deals with a sparse case.
\begin{result}\label{result:sparse} 
(\citep{bauer2001random})
Suppose that for each fixed $n$, the entries of $W_n$ are i.i.d. Bernoulli $p_n$ such that $np_n\to \lambda$. Then the  EESD of $W_n$ converges weakly to a symmetric probability measure $\mu_{bg}$ say. Its even moments are given by 
\begin{align}\label{Bauer-exprss}
\beta_{2k}(\mu_{bg})=\sum_{b=1}^k I_{k,b}\lambda^b,
\end{align} 
where $I_{k,b}$ is the number of normalised $2k$-plets associated to certain trees with $b$ edges. 
\end{result}
A nicer description of $\mu_{bg}$ does not seem to be available, but it is easy to check that $\mu_{bg}$ is not a semi-circular distribution. Later, we shall present an alternative description of the above moments.

Now we quote a result where the primary motivation was to consider  entries of $W_n$ that are i.i.d. with distribution $G_n$ which may be all  light-tailed but as $n \to \infty$, $G_n$ may converge to a heavy-tailed distribution. 
\begin{result}\label{result:Zakharevich} (Theorem 1,  \citep{zakharevich2006generalization}) Suppose $\{G_k\}$ is a sequence of probability distributions each of which has mean zero and all moments finite. Let $\mu_n(k)$ be the $k$th moment of $G_n$. 
Suppose that for each $n$, the entries of $W_n$ are i.i.d. $G_n$. Let $$\displaystyle{\lim_{n\to\infty} \frac{\mu_n(k)}{n^{k/2-1}\mu_n(2)^{k/2}}}= g_k,  \ \ \text{say,  exists for all} \ \ k\geq 1.$$ Then the ESD of $\frac{W_n}{\sqrt{n\mu_n(2)}}$ converges in probability to a distribution $\mu_{zak}$ that depends only on the sequence $\{g_{2k}\}$.
\end{result}
Note that $g_2=1$. It is known that $\mu_{zak}=\mu_{s}$ if and only if $g_{2k}=0$ for all $k \geq 3$. This condition holds if $\{G_k\}$ are identical.

The moments of $\mu_{zak}$ in the above theorem are determined with the help of certain \textit{trees}. The construction of the trees in Result \ref{result:sparse} and Result \ref{result:Zakharevich} are different. We shall discuss these later in Sections \ref{sparse} and \ref{triangular iid}.   


The next result focuses on entries which have unequal variances. Suppose $\{\sigma_{ij}^2, i,j  \geq 2\}$ is an array of non-negative real numbers with $\sigma_{ij}^2=\sigma_{ji}^2$ for every $i,j$. For every $n \geq 1$, Let $I_1= [0,\frac{1}{n}]$ and $I_i= (\frac{i-1}{n}, \frac{i}{n}]$,   $i \geq 2$ be a partition of $[0, 1]$ and let 
\begin{equation*}H_n(x,y)= \sigma_{ij}^2 \ \ \text{when}\ \  (x,y) \in I_i\times I_j.
\end{equation*} This defines  a sequence of functions called \textit{graphons} $H_n$ on $[0, \ 1]^2$. 

Consider all finite multigraphs $G= (V, E)$ without loops with vertex set   $V=\{1,\ldots , n\}$ and edge set $E$. Let
$t(G, H_n)$ denote the \textit{homomorphism density}
\begin{equation}\label{zhu-density}
t(G, H_n)=\int_{[0,1]^{|V|}} \prod_{(i, j) \in E}H_n(x_i,x_j) \prod_{i \in V}dx_i.
\end{equation}
\begin{result}\label{result:Zhu}(Theorem 3.2, \citep{zhu2020graphon})
Suppose the entries $\{a_{ij}=a_{ji}: 1 \leq i \leq j \leq n\}$ of $W_n$ have mean zero. 
Let $\mathbb{E}[a_{ij}^2]=\sigma_{ij}^2$. Assume that 
\begin{equation}\label{boundedvar}\sup_{ij} \sigma_{ij}^2 \leq B < \infty,
\end{equation} 
 \begin{equation}\label{treecondition}\lim t(T, H_n) \ \text{exists for every finite tree} \ T,\ \ \text{and}
 \end{equation}
  \begin{equation}\label{lindeberg}
 \displaystyle \lim_{n \to \infty} \frac{1}{n^2} \sum_{1 \leq i, j \leq n} \mathbb{E}\big[a_{ij}^2\boldsymbol {1}_{[|a_{ij}|\geq \eta \sqrt{n}]}\big]=0 \ \ \ \  \text{ for any constant } \eta>0
\ \text{(Lindeberg's condition)}. 
\end{equation} 
Then under the conditions (\ref{boundedvar}), (\ref{treecondition}), and (\ref{lindeberg}), the LSD of $W_n/\sqrt{n}$, say $\mu_{zhu}$  exists almost surely. Its odd moments are $0$ and for every $k$,  
 \begin{equation}\label{zhulimit}\beta_{2k}(\mu_{zhu})=\sum_{T}\lim t(T, H_n)
 \end{equation}
 where the  sum is over all rooted trees $T$ each with $k+1$ vertices. 
\end{result}
%
One goal of this paper is to bring the above results under one umbrella. We state a general result in Theorem \ref{thm:maingk} 
along with an important Corollary \ref{thm:main}.
Results \ref{result:semi-circle}--\ref{result:Zhu} quoted above, essentially follow from these two results, and so do some more existing results which we mention later in  Section \ref{discussion}.
In addition, using the ideas from the proofs of Theorem \ref{thm:maingk} and Corollary \ref{thm:main}, in Results \ref{imhomoge}--\ref{res:block} we also prove some new results and/or improved versions of existing results for the adjacency matrices of some non-homogeneous Erd\H{o}s-R\'{e}nyi graphs, for matrices with variance profile, and for certain banded matrices and block random matrices. 
A detailed discussion on the existing and new results are given later in Section  \ref{discussion}. 
We provide a brief glimpse here.

Recall that $\mu_s$ is linked to $NC_{2}(2k)$.  
The limit distributions $\mu_{\alpha}, \mu_{bg}, \mu_{zak}$ and $\mu_{zhu}$ in Results \ref{Heavy-tailed entries, BenArous}--\ref{result:Zhu} respectively, cannot be described via $NC_{2}(2k)$. In Theorem \ref{thm:maingk}  we identify a larger class of partitions, called \textit{special symmetric partitions}, and denoted by $SS(2k)$, of $\{1, 2, \ldots , 2k\}$, which are linked to these limits. The class $SS(2k)$ includes $NC_{2}(2k)$ as well as many other crossing and non-crossing partitions. The contribution to the moments of the LSD in general vary across the partitions and depend on specific moment properties of the entries. 
The moments in Result \ref{result:Zakharevich} are described in \citep{zakharevich2006generalization} via certain trees. We connect that result with ours by showing that the set $SS(2k)$ is in one-to-one correspondence with those trees.   
Similarly, the count $I_{k,b}$ in (\ref{Bauer-exprss}) is the same as the count of a specific subclass $SS_{b}(2k)$ of $SS(2k)$. This provides an alternate partition description of the limiting moments of $\mu_{bg}$. 
Result \ref{result:Zhu} depends on only the first two moments of the entries. The limit moments of $\mu_{zhu}$ are described via certain rooted trees. Theorem \ref{thm:maingk} helps to generalise this result and thereby allowing contribution from the higher order moments of the entries. The contribution to the limit moments in the more general case now come from coloured rooted trees, which are also in one to one correspondence with $SS(2k)$.
The set $SS(2k)$ also appears in the limiting moments of the models described in Results \ref{res:graphons}--\ref{res:block} in Section \ref{discussion}. 
   
In  Section \ref{mainresults} we state our main result and its corollaries. In Section \ref{Circuits and Words} we provide some basic notions, specifically connected to partitions,  that we shall need in the proofs. 
In Section \ref{proof of the theorem}, we prove Theorem \ref{thm:maingk} and Corollary \ref{thm:main}.  In Section \ref{discussion} we connect our result with Results \ref{result:semi-circle}--\ref{result:Zhu} and  some other existing results. 
We also state and prove some new results and/or extend some existing results for specific models. 


\section{Main results}\label{mainresults}
Let $[k]:=\{1,2,\ldots, k\}$ and  $\mathcal{P}(k)$ denote the set of all partitions of $[k]$. Let $\mathcal{P}_{2}(2k)$ be the set of pair-partitions of $[2k]$.   
Suppose $\{c_k, \ k \geq 1\}$ is any sequence of numbers. Its multiplicative extension is defined on $\mathcal{P}_k$, $k \geq 1$ as follows. For any $\sigma\in \mathcal{P}_k$,  define 
\begin{equation*}c_{\sigma}=\prod_{V \ \ \text{is a block of}\ \ \sigma} c_{|V|}.
\end{equation*}
\begin{definition}(\textit{Special Symmetric Partition})\label{ss(2k)}
Let $\sigma \in \mathcal{P}(k)$ and let $V_1, V_2, \ldots $ be the blocks of $\sigma$, arranged in ascending order of their smallest elements.  This partition is said to be \textit{special symmetric} if 
\begin{enumerate}
\item[(i)] The last block is a union of sets of even size 
of consecutive integers.
\item[(ii)] Between any two successive elements of any block there are even number of elements from any other  block, and they each appear equal number of times in an odd and an even position.
\end{enumerate}
We denote the set of all special symmetric partitions of $[k]$ by $SS(k)$. 
\end{definition}
Clearly $SS(k)=\emptyset$ when $k$ is odd. Let $SS_b(2k)\subset SS(2k)$ where each partition has exactly $b$ blocks.  Clearly $b \leq k$ always. The one-block partition $\{1, 2, \ldots , 2k\}\in SS(2k)$. We shall see later that each block of $SS(2k)$ has even size. It is easy to check that 
$SS(2k)\cap \mathcal{P}_{2}(2k)=NC_2(2k) \subset SS(2k)$. There are $\pi \in SS(2k)$ that are either crossing or not paired. For example the partition  $\pi=\{\{1, 2, 5, 6\}, \{3, 4, 7, 8\}\}\in SS(8)$ but is crossing. 

Now we introduce a set of assumptions on the entries $\{x_{ij,n}\}$ of $W_n$. We drop the suffix $n$ for convenience wherever there is no confusion. For any real-valued function $g$ on $[0, \ 1]$, let $\|g\|:=\sup_{0 \leq x\leq 1} |g(x)|$ denote the sup norm. \\

\noindent \textbf{Assumption A}. 
Suppose $\{g_{2k,n}\}$ is a sequence of bounded Riemann integrable functions on $[0,1] ^2$. There exists a sequence $\{t_n\}$ with $t_n\in [0,\infty]$ such that 
\begin{enumerate}
\item [(i)] For each $k \in \mathbb{N}$,
\begin{align}
 & n \ \mathbb{E}\left[x_{ij}^{2k}\boldsymbol {1}_{\{|x_{ij}|\leq t_n\}}\right]= g_{2k,n}\Big(\frac{i}{n},\frac{j}{n}\Big) \ \ \ \text{for } \ 1\leq i \leq j \leq n, \label{gkeven}\\
& \displaystyle \lim_{n \rightarrow \infty} \ n^{\alpha} \underset{1\leq i \leq j \leq n}{\sup} \ \mathbb{E}\left[x_{ij}^{2k-1}\boldsymbol {1}_{\{|x_{ij}|\leq t_n\}}\right] = 0  \ \ \text{for any } \alpha<1 . \label{gkodd}
\end{align}
\item [(ii)] The functions $\{g_{2k,n}(\cdot), n \geq 1\}$ converges uniformly to $g_{2k}(\cdot)$ for each $k \geq 1$. 
\item[(iii)] Let  $M_{2k}=\|g_{2k}\|$, $M_{2k-1}=0$ for all $k \geq 1$. Then $\alpha_{2k}=  \sum_{\sigma \in \mathcal{P}(2k)} M_{\sigma}$  satisfies \textit{Carleman's condition}, 
$$ \displaystyle \sum_{k=1}^{\infty} \alpha_{2k}^{-\frac{1}{2k}}= \infty.$$
\end{enumerate}

\begin{theorem}\label{thm:maingk}
Let $W_n=(x_{ij,n})_{1\leq i\leq j \leq n}$ be the $n \times n$ symmetric matrix where $\{x_{ij,n}\ ;1 \leq i \leq j\leq n\}$ are independent and  satisfy Assumption A. Let $y_{ij}= x_{ij}\boldsymbol {1}_{\{|x_{ij}|\leq t_n\}}$ and $Z_n= (y_{ij,n})_{1\leq i\leq j \leq n}$. Then
\begin{enumerate}
\item[(a)] the ESD of $Z_n$ converges weakly almost surely to $\mu^\prime$ say, whose moment sequence is determined by the functions $g_{2k}, \ k \geq 1$.
\item[(b)] the ESD of $W_n$ converges weakly to $\mu^{\prime}$ almost surely (respectively in probability) if 
\begin{align}\label{truncation}
\frac{1}{n} \sum_{i,j} \ x_{ij}^2\boldsymbol {1}_{\{|x_{ij}| > t_n\}} \rightarrow 0, \ \mbox{almost surely (respectively in probability)},
\end{align}

 \end{enumerate} The law $\mu^{\prime}$ is semi-circular if and only if $g_{2k}=0$ for all $k>1$ and $\int_{0}^1 g_2(x,y)\ dy$ is constant.
\end{theorem}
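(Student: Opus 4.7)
The approach is via the moment method applied to the truncated matrix $Z_n$. Starting from the trace expansion
\begin{equation*}
\beta_k\big(\mathbb{E}\mu_{Z_n}\big) \;=\; \frac{1}{n}\mathbb{E}\big[\Tr(Z_n^k)\big] \;=\; \frac{1}{n}\sum_{i_1,\ldots,i_k=1}^{n} \mathbb{E}\big[y_{i_1 i_2} y_{i_2 i_3}\cdots y_{i_k i_1}\big],
\end{equation*}
I would group circuits $(i_1,\ldots,i_k)$ by the partition $\sigma \in \mathcal{P}(k)$ they induce on edge positions: $a, b$ lie in the same block iff $\{i_a, i_{a+1}\} = \{i_b, i_{b+1}\}$. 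By independence the expectation factorises across blocks. By \eqref{gkeven}, a block of size $2\ell$ contributes $g_{2\ell,n}(i/n, j/n)/n$; by \eqref{gkodd}, an odd-sized block contributes $o(n^{-\alpha})$ for every $\alpha<1$. If $\sigma$ has $b$ blocks and the generic circuit visits $v$ distinct vertex labels, the combined order of contribution of $\sigma$ is $n^{v-1-b}$, plus an error that vanishes if any block is odd.

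The central combinatorial task is to show that circuits surviving in the limit are exactly those whose edge partition lies in $SS(k)$ and whose vertex profile attains $v = b+1$. Walking around the circuit and tracking how the vertex sequence can be assembled consistently from matched edges, clause (ii) of Definition \ref{ss(2k)} (``even number between successive elements of any block, equally split between odd and even positions'') is precisely what allows the vertex set to resolve into $b+1$ free coordinates, while clause (i) encodes closing the walk. I expect this combinatorial equivalence to be the main obstacle. For each $\sigma \in SS(k)$, uniform convergence in Assumption A(ii) together with Riemann integrability promotes the sum over the $b+1$ free indices, rescaled by $n^{-(b+1)}$, to
\begin{equation*}
\int_{[0,1]^{b+1}} \prod_{V \in \sigma} g_{2|V|}\big(x_{\alpha(V)}, x_{\beta(V)}\big)\, dx_1\cdots dx_{b+1},
\end{equation*}
where $\alpha(V), \beta(V)$ are the two vertex labels incident to the block $V$. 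Summing over $\sigma \in SS(k)$ yields a candidate moment sequence which, by Assumption A(iii) (Carleman), uniquely determines a law $\mu'$.

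The almost sure upgrade follows from a standard fourth-moment argument: expanding $\mathrm{Var}(\beta_k(\mu_{Z_n}))$ as a double circuit sum, the matched-pair terms cancel and the residual is $O(n^{-2})$, so Borel-Cantelli applies. For part (b), the L\'evy distance between the ESDs of symmetric matrices satisfies
\begin{equation*}
L\big(\mu_{W_n}, \mu_{Z_n}\big)^3 \;\leq\; \frac{1}{n}\|W_n - Z_n\|_F^2 \;=\; \frac{1}{n}\sum_{i,j} x_{ij}^2\, \boldsymbol{1}_{\{|x_{ij}|> t_n\}}
\end{equation*}
by Hoffman--Wielandt, so hypothesis (\ref{truncation}) transfers the LSD of $Z_n$ to $W_n$ in the chosen mode of convergence.

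For the semicircular characterisation, observe that $SS(2k) \cap \mathcal{P}_2(2k) = NC_2(2k)$. If $g_{2k} = 0$ for all $k>1$, only pair partitions contribute, and each $\sigma \in NC_2(2k)$ corresponds to a planar tree on $k+1$ vertices whose contribution is an integral of $g_2$-factors along its edges. Setting $c(x) := \int_0^1 g_2(x,y)\,dy$, a leaf-deletion induction shows the tree integral equals $c^k$ for every such tree precisely when $c$ is independent of $x$; in that case $\beta_{2k}(\mu') = c^k\,|NC_2(2k)|$ identifies $\mu'$ as the semicircular law of variance $c$. Conversely, if either condition fails, the contributions from higher-order blocks or the non-uniformity across tree shapes break this identity, so $\mu'$ cannot be semicircular.
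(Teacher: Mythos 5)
Your overall architecture --- trace expansion, classification of circuits by the induced edge-partition, identification of $SS(k)$ as the partitions attaining $v=b+1$ free vertices, Riemann-sum passage to the integrals of $g_{2k}$, a Hoffman--Wielandt-type metric bound for part (b), and a leaf-deletion induction for the semicircular case --- is the same as the paper's. The combinatorial core that you defer ($v=b+1$ exactly for special symmetric partitions and $v\le b$ otherwise, including for partitions with odd blocks, which is what makes your $o(n^{-\alpha})$ factors actually kill those terms) is the content of the paper's Lemmas \ref{lem:wig1}, \ref{lemma:result} and \ref{lem:wig2}; you correctly flag it as the main obstacle, so I will not count it against you.

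There is, however, one genuine gap: the almost-sure upgrade. You claim $\var\big(\beta_k(\mu_{Z_n})\big)=O(n^{-2})$ and invoke Borel--Cantelli. In the regime this theorem is designed for, that bound is false. Take $k=2$: $\frac1n\Tr(Z_n^2)=\frac1n\sum_{i,j}y_{ij}^2$ is a sum of independent terms, and under \eqref{gkeven} each entry has $\E[y_{ij}^4]=g_{4,n}(i/n,j/n)/n$, so
\begin{equation*}
\var\Big(\tfrac1n\Tr(Z_n^2)\Big)\;\asymp\;\frac{1}{n^2}\sum_{i\le j}\E[y_{ij}^4]\;\asymp\;\frac1n\int_{[0,1]^2} g_4(x,y)\,dx\,dy,
\end{equation*}
which is of exact order $n^{-1}$ whenever $g_4\not\equiv 0$ --- precisely the case that distinguishes this theorem from the classical Wigner setting. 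So the variance is not summable, and Chebyshev plus Borel--Cantelli only yields convergence in probability. The paper circumvents this by verifying the \emph{fourth} moment condition of Lemma \ref{lem:genmoment}: the count $Q^b_{k,4}\le Cn^{b+2}$ of jointly- and cross-matched quadruples of circuits (Lemma \ref{lem:moment}), combined with the per-block bounds, gives $\frac{1}{n^4}\E[\Tr(Z_n^k)-\E\Tr(Z_n^k)]^4=O(n^{-3/2})$, which is summable. You would need to replace your second-moment step by this (or an equivalent) fourth-moment estimate. Two smaller points: the truncated entries $y_{ij}$ need not be centered, and the paper devotes a separate step to centering them and controlling the perturbation in $d_2$; and the converse half of the semicircular characterisation is asserted rather than proved --- since the individual partition contributions are nonnegative but the $NC_2$ contributions themselves vary with $g_2$, one cannot simply say the identity ``breaks''; the paper pins this down by normalising the variance and applying Cauchy--Schwarz to the fourth moment to force $\int_0^1 g_2(x,y)\,dy\equiv 1$ and $g_4\equiv 0$, then inducting.
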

\begin{remark}
 The condition that the sequence of functions $g_{2k,n}$ converges to $g_{2k}$ uniformly for all $k\geq 1$, can be weakened slightly. For details see Result \ref{res:graphons} in Section \ref{graphons}.
If all moments of $\{x_{ij}\}$ are finite, then choosing $t_n=\infty$, we have $Z_n=W_n$ and then the theorem concludes that the ESD of $W_n$ converges almost surely to $\mu^{\prime}$. 
\end{remark}
The following corollary is a special case of Theorem \ref{thm:maingk}.
\begin{cor}\label{thm:main}
Suppose the entries $\{x_{ij}=x_{ji}\ ;1 \leq i \leq j\leq n\}$ are i.i.d. for each $n$ and there is a sequence $(t_n)_{n \geq 1}$ such that the following two conditions hold:  
\begin{enumerate}
\item[(i)] For each $k \in \mathbb{N}$,
	 \begin{eqnarray}
\lim_{n \rightarrow \infty} n \ \mathbb{E}\left[x_{ij}^{2k}\boldsymbol {1}_{\{|x_{ij}|\leq t_n\}}\right]&=:& C_{2k} < \infty, \label{ckeven}\\
\lim_{n \rightarrow \infty} n^{\alpha} \  \mathbb{E}\left[x_{ij}^{2k-1}\boldsymbol {1}_{\{|x_{ij}|\leq t_n\}}\right] &=& 0 \ \ \text{for any}\ \  \alpha < 1.\label{ckodd}
\end{eqnarray}
\item[(ii)] The sequence $\{0, C_2, 0,C_4, 0,\ldots\}$ is  the cumulant sequence  of a probability distribution $G$ whose moment sequence $\{\beta_k\}$  satisfies Carleman's condition:
$$\displaystyle \sum_{k=1}^{\infty} \beta_{2k}^{-\frac{1}{2k}}= \infty.$$
\end{enumerate}
 Let $y_{ij}= x_{ij}\boldsymbol {1}_{[|x_{ij}|\leq t_n]}$ and $Z_n=(y_{ij})_{1\leq i \leq j\leq n}$. Then the ESD of $Z_n$ converges almost surely to $\mu$ whose moments are given by
$$\beta_k(\mu) =\left\{ \begin{array}{cc}
\displaystyle\sum_{\sigma \in SS(k)} C_{\sigma} & \text{if } k \text{ is even},\\
0 &  \text{if } k \text{ is odd}.
\end{array}
\right.
$$
Further, the ESD of $W_n$ converges weakly to $\mu$ almost surely (respectively in probability) if \begin{align*}
\frac{1}{n} \sum_{i,j} \ x_{ij}^2[\boldsymbol {1}_{\{|x_{ij}| > t_n\}}] \rightarrow 0, \ \mbox{almost surely (respectively in probability)}.
\end{align*}
\end{cor}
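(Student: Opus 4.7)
The plan is to deduce Corollary \ref{thm:main} as a direct i.i.d.\ specialization of Theorem \ref{thm:maingk}. The first step is to verify Assumption A for the i.i.d.\ model. Since $\{x_{ij}:1\le i\le j\le n\}$ are i.i.d., the quantity $n\,\mathbb{E}[x_{ij}^{2k}\mathbf{1}_{\{|x_{ij}|\le t_n\}}]$ is independent of $(i,j)$, so the $g_{2k,n}$ in \eqref{gkeven} may be taken as the constant function $C_{2k,n}:=n\,\mathbb{E}[x_{ij}^{2k}\mathbf{1}_{\{|x_{ij}|\le t_n\}}]$ on $[0,1]^{2}$, and the odd-moment clause \eqref{gkodd} coincides with \eqref{ckodd}. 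Hypothesis \eqref{ckeven} then forces $g_{2k,n}\to g_{2k}\equiv C_{2k}$ uniformly on $[0,1]^{2}$, giving Assumption A(ii).

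For Assumption A(iii), note $M_{2k}=\|g_{2k}\|=C_{2k}\ge 0$ (since $x^{2k}\ge 0$) and $M_{2k-1}=0$. By the multiplicative extension convention, $M_{\sigma}$ vanishes whenever $\sigma$ has a block of odd size and equals $C_{\sigma}$ otherwise. Applying the classical moment--cumulant formula to the distribution $G$ whose cumulant sequence is $\{0,C_{2},0,C_{4},\ldots\}$ yields
$$\beta_{2k}(G)\;=\;\sum_{\sigma\in\mathcal{P}(2k)}\prod_{V\in\sigma}\kappa_{|V|}\;=\;\sum_{\sigma\in\mathcal{P}(2k)}M_{\sigma}\;=\;\alpha_{2k},$$
so Carleman's condition on $\{\beta_{2k}(G)\}$ transfers verbatim to $\{\alpha_{2k}\}$, establishing A(iii).

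With Assumption A in place, Theorem \ref{thm:maingk}(a) yields almost sure weak convergence of the ESD of $Z_n$ to some limit $\mu'$, and part (b) together with the assumed truncation condition transfers this convergence to $W_n$ (almost surely or in probability according to the mode in the hypothesis). It remains only to identify $\mu'=\mu$, i.e.\ to show $\beta_{2k}(\mu')=\sum_{\sigma\in SS(2k)}C_{\sigma}$. The internal structure of the proof of Theorem \ref{thm:maingk}, alluded to in the excerpt, expresses each even moment of $\mu'$ as a sum indexed by $\sigma\in SS(2k)$ of a product of integrals of the $g_{|V|}$ over the blocks $V$ of $\sigma$; when each $g_{2j}$ is the constant $C_{2j}$, each such integral collapses to $C_{|V|}$, so the contribution of $\sigma$ becomes $C_{\sigma}$. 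Odd moments vanish because $SS(2k+1)=\emptyset$.

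The only delicate point I anticipate is the last one: confirming that the constant-$g$ specialization of the moment formula from Theorem \ref{thm:maingk} really equals $\sum_{\sigma\in SS(2k)}C_{\sigma}$ rather than a sum over a larger or smaller class. This requires inspecting how $SS(2k)$ enters the trace expansion of $\mathbb{E}[n^{-(k+1)}\mathrm{Tr}\, Z_n^{2k}]$ in the theorem's proof and checking that no cancellation or extra multiplicity arises under the constant-graphon reduction. Everything else---uniform convergence of constants, Carleman via moment--cumulant duality, and the truncation transfer---is routine bookkeeping once the graphon framework of Theorem \ref{thm:maingk} is granted.
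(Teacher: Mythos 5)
Your proposal is correct and follows essentially the same route as the paper: verify Assumption A by taking $g_{2k,n}$ to be the constant $n\,\mathbb{E}[x_{ij}^{2k}\mathbf{1}_{\{|x_{ij}|\le t_n\}}]$, transfer Carleman's condition from $G$ to $\{\alpha_{2k}\}$ via the moment--cumulant formula, invoke Theorem \ref{thm:maingk}, and observe that the integral formula \eqref{wignernoniid}--\eqref{limit-ss2k} collapses to $\sum_{\sigma\in SS(2k)}C_{\sigma}$ when each $g_{2j}$ is constant. The ``delicate point'' you flag at the end is exactly what the paper's equation \eqref{wignernoniid} already settles, so no gap remains.
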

A detailed discussion on our result and the existing results is given in Section \ref{discussion} after the proofs of Theorem \ref{thm:maingk} and Corollary \ref{thm:main}. We also state and prove some new results there which follow directly from Theorem \ref{thm:maingk} and Corollary \ref{thm:main} or follow from the ideas in their proofs. 

%
%


\section{Circuits, words and partitions}\label{Circuits and Words}


\noindent A \textit{circuit} $\pi$ is a function $\pi : \{0,1,2,\ldots,k\}\longrightarrow \{1,2,3,\ldots,n\}$ with $\pi(0)=\pi(k)$. We say that the \textit{length} of $\pi$ is $k$ and denote it by $\ell(\pi)$. Using  circuits, we can express $\Tr(W_n^k)$ as
\begin{align}\label{momentf1usual}
\mathbb{E}\big[\Tr(W_{n}^{k})\big]  = \sum_{\pi:\ell(\pi)=k}\mathbb{E}\big[x_{\pi(0),\pi(1)}x_{\pi(1),\pi(2)}\cdots x_{\pi(k-1),\pi(k)}\big]=\sum_{\pi:\ell(\pi)=k}\mathbb{E}[X_{\pi}],
\end{align}
where $X_{\pi}= x_{\pi(0),\pi(1)}x_{\pi(1),\pi(2)}\cdots x_{\pi(k-1),\pi(k)}$.
For any $\pi$, the (unordered) pairs $\{\pi(i-1),\pi(i)\}$ will be called \textit{edges}. 
When an edge appears more than once in a circuit $\pi$, then $\pi$ is called 
\textit{matched}. 
Any $m$ circuits $\pi_1,\pi_2,\ldots,\pi_m$ are said to be \textit{jointly-matched} if each edge occurs at least twice across all circuits. They are said to be \textit{cross-matched} if each circuit has an edge which occurs in at least one of the other circuits.\\
Two circuits $\pi_1$ and $\pi_2$ of length $k$ are said to be \textit{equivalent} if and only if for all $i,j$, \textbf{as unordered pairs}, 
\begin{align*}
(\pi_1(i-1),\pi_1(i))=(\pi_1(j-1),\pi_1(j))
\Leftrightarrow (\pi_2(i-1),\pi_2(i))=(\pi_2(j-1),\pi_2(j)).
\end{align*} 
For every $k$, the above is an equivalence relation on $\{\pi:\ell(\pi)=k\}$. 
Any equivalence class  of circuits can be indexed by an element of $\mathcal{P}(k)$.
The positions where the edges match are identified by each block of a partition of $[k]$. For example, the partition $\{\{1,3\}, \{2,4,5\}\}$ of $[5]$ corresponds to the equivalent class 
$$\big\{\pi:\ell(\pi)=5\ \mbox{and}\ (\pi(0),\pi(1))=(\pi(2),\pi(3)),\ (\pi(1),\pi(2))=(\pi(3),\pi(4))=(\pi(4),\pi(5))\big\}$$  
Also, an element of $\mathcal{P}(k)$ can be identified with a \textit{word} of length $k$ of letters.  Given a partition, we represent the integers of the same partition block by the same letter, and   the first occurrence of each letter is in alphabetical order and vice versa. For example, the partition $\{\{1,3\}, \{2,4,5\}\}$ of $[5]$ corresponds to the word $ababb$. On the other hand, the word $aabccba$ represents the partition $\{\{1,2,7\},\{3,6\},\{4,5\}\}$  of $[7]$. A typical word will be denoted by $\boldsymbol{\omega}$ and its $i$-th letter as $\boldsymbol{\omega}[i]$. For example, for the word $\boldsymbol{\omega}=abbacac$, $\boldsymbol{\omega}[2]=b$ and the partition is $\{\{1,4,6\},\{2,3\},\{5,7\}\}$. Hence from this point onwards we will use the words \textit{partition} and \textit{word} interchangeably.\\

\noindent \textit{The class $\Pi(\boldsymbol {\omega})$}: The equivalence class of circuits corresponding to a word $\boldsymbol {\omega}$ is denoted by $\Pi(\boldsymbol {\omega})$. So 
\begin{align*}
\Pi(\boldsymbol {\omega})= \big\{\pi: \boldsymbol {\omega}[i]=\boldsymbol {\omega}[j] \Leftrightarrow (\pi(i-1),\pi(i))= (\pi(j-1),\pi(j))\big\}.
\end{align*}
If $\boldsymbol {\omega}[i]=\boldsymbol {\omega}[j]$ , then as ordered pairs,
\[
  (\pi(i-1),\pi(i)) = 
  \begin{cases}
                (\pi(j-1),\pi(j))               & \text{($(C1)$ )   or} \\
                              (\pi(j),\pi(j-1))      & \text{($(C2)$)}.
  \end{cases}
\]
We call the above two situations as $(C1)$ and $(C2)$ respectively.

We now define some classes of words (partitions) that we shall use frequently. 
\begin{definition}
A word $\boldsymbol {\omega}$ is called \textit{even} if each distinct letter in $\boldsymbol {\omega}$ appears an even number of times. This is the same as saying that each block 	of the corresponding partition  has even size. 
The set of all even words (or partitions) of length $2k$ is denoted by $E(2k)$.	
\end{definition}
For example, $ababcc$ is an even word of length 6 with 3 letters. The corresponding partition of $[6]$ is $\{\{1,3\},\{2,4\},\{5,6\}\}$.		
\begin{definition}
A word $\boldsymbol {\omega}$ is called \textit{symmetric} if each distinct letter appears equal number of times in odd and even positions. The set of all such words (or partitions) of length $2k$ is denoted by $S(2k)$.	
\end{definition}
For instance, $abbbba$ is a symmetric word of length $6$ with $2$ distinct letters. Note that a symmetric word is always even. Hence $S(2k) \subset E(2k) \subset \mathcal{P}(2k)$. But every even word is not symmetric. For example, $ababcc$ is even but not symmetric.

Recall the set of partitions $SS(k)$ in definition \ref{ss(2k)}. The words $\boldsymbol {\omega}$ corresponding to $SS(2k)$ will be called \textit{special symmetric} words. For example, the word $aabbaabb$ is a special symmetric word where the corresponding special symmetric partition is $\{\{1, 2, 5, 6\}, \{3, 4, 7, 8\}\}$.

\begin{definition} \label{pure blocks}
Any string of length $m$ $(m>1)$ of  same letter in $\boldsymbol {\omega}$ will be called a \textit{pure block} of size $m$.
\end{definition}
For example, in the word $aabcccbd$, $a$ and $c$ appear in pure blocks of sizes 2 and 3 respectively.

As seen above, the word $aabbaabb$ is a special symmetric word where the corresponding special symmetric partition is $\{\{1, 2, 5, 6\}, \{3, 4, 7, 8\}\}$. 
 For such words, the last new letter appears in pure blocks of even sizes and between two successive appearances of any letter; the other letters that appear, occur even number of times and also an equal number of times in the odd and the even positions. 

We have given a verbal description of the set $SS(2k)$ in Section \ref{mainresults}. For completeness sake, here is a more formal description.  A subset of consecutive natural numbers $\{i,i+1, \ldots,   i+h\}$ is called an \textit{interval} in $\mathbb N$ and denoted by $[i, \ i+h ]$. For example, $[k]$ and $\{2,3,4,5\}=[2, \ 5]$ are intervals in $\mathbb N$. We define an  order  on disjoint  subsets of $\mathbb N$ as follows. For $V_1,V_2\subset \mathbb N$ and $V_1\cap V_2=\emptyset$, we say $V_1<V_2$ if $\min\{j:j\in V_1\}< \min\{j:j\in V_2\}$.  
 For $\sigma  \in \mathcal{P}(2k)$, we  write $\sigma$ as $ \{V_1,V_2,\ldots,V_r\}$  where $V_1,V_2,\ldots,V_r$ are the blocks of the partition $\sigma$ and $V_1<V_2<\cdots <V_r$.  
Now we can describe $SS(2k)$ as follows: 
\begin{enumerate}
\item[(a)] For every $k\in \mathbb N$, the single block partition is $\{1,2, \ldots , 2k\}$ an element of $SS(2k)$.  
 For $k=1$, this is the only element of $SS(2)$.
 \item[(b)] for $k > 1$, a partition  $\sigma = \{V_1 <V_2 < \cdots < V_r\}$ of $[2k]$ belongs to $SS(2k)$ if and only if
  	\begin{enumerate}
\item[(i)] The \textit{last} block  $V_r$ is a union of even sized intervals, say $V_r= I_1 \cup I_2 \cup \cdots\cup I_l$.
\item[(ii)]$|\{[\min\ I_j,\max\ I_{j+1}] \setminus(I_j \cup I_{j+1})\}\cap V_i|$ is even for $1\leq i \leq r-1$ and $1\leq j \leq l-1$.
\item[(iii)] $\sigma \setminus V_r$ can be realized as a special symmetric partition of $[2k]\setminus V_r$.
\end{enumerate}	
\end{enumerate}
The subclass of $SS(2k)$ that has exactly 
$b$ blocks will be denoted by ${SS}_b(2k)$. The corresponding word will have exactly $b$ distinct letters.
For example, the partition $\{\{1,2,5,6\},\{3,4,7,8\},\{9,10\}\}$ of $[10]$ belongs to $SS_3(10)$.  The corresponding special symmetric word is $aabbaabbcc$.
\begin{remark}
 The set $SS(2k)$ \textit{does not} form a lattice under the inclusion ordering. For example, the partitions $\{\{1,2\},\{3,6\},\{4,5\}\}$ and $\{\{1,6\},\{2,3,4,5\}\}$ both belong to $SS(6)$.  However, the largest partition which are smaller than both satisfies
$$\{\{1,2\},\{3,6\},\{4,5\}\} \wedge \{\{1,6\},\{2,3,4,5\}\}=\{\{1\},\{2\},\{3\},\{4,5\},\{6\}\} \notin SS(6).$$
\end{remark}
\begin{definition} 
If $\pi$ is a circuit then any $\pi(i)$  will be called a \textit{vertex}. This vertex is \textit{generating} if $i=0$ or $\boldsymbol {\omega}[i]$ is the first occurrence of a letter in the  word $\boldsymbol {\omega}$ corresponding to $\pi$. All other vertices are \textit{non-generating}.
\end{definition} 

For example, for the word $abc$,  $\pi(0), \pi(1), \pi(2)$ and $\pi(3)=\pi(0)$  are generating.
For the word $aaa$, $\pi(0)$ and $\pi(1)$ are generating.
For the word $abcabc$ $\pi(0), \pi(1), \pi(2)$ and $\pi(3)$ are generating. It so happens that in this case due to the structure of the word, $\pi(3)=\pi(0)$. 
 
Note that the circuits corresponding to a word $\boldsymbol {\omega}$ are completely determined by the generating vertices. 
The vertex $\pi(0)$ is always generating, there is one generating vertex for each new letter in $\boldsymbol{\omega}$. So, if $\boldsymbol {\omega}$ has $b$ distinct letters then the number of generating vertices is $(b+1)$. As seen in the example above, some of these vertices may be identical, depending on the nature of the word. In any case, 
\begin{equation}\label{cardiality of word}
| \Pi(\boldsymbol {\omega})| \leq \mathcal{O}(n^{b+1})\ \ \text{whenever} \ \omega \text{ has } \\   b \ \ \text{distinct letters}.
\end{equation}  
 Now suppose $\boldsymbol {\omega}$ is a word of length $k$ with $b$ distinct letters. Then each repeated letter gives rise to constraints (either $(C1)$ or $(C2)$) which forces some of the $\pi(i)$ to be identical in value, thus forming a partition of the vertices $$V_{\omega}=\{\pi(i):0 \leq i \leq k\}.$$ Note that the number of partition blocks is \textit{at most} $b+1$. 
 For example, the word $abc$ gives three partition blocks,
 $\pi(0)=\pi(3)$,  $\pi(1)$ and $\pi(2)$.
The word $abcabc$ gives three partition blocks,
$\pi(0)=\pi(3)=\pi(6)$,  $\pi(1)=\pi(4)$ and $\pi(2)=\pi(5)$. As two of the generating vertices $\pi(0)$ and $\pi(3)$ are equal, they do not give rise to distinct partition blocks. On the other hand, the word $aabbbb$ gives three partition blocks,
$\pi(0)=\pi(2)=\pi(4)=\pi(6)$,  $\pi(1)$ and $\pi(3)=\pi(5)$ as all the generating vertices in this case are distinct.
   
%

For a word $\omega$ with $b$ distinct letters, suppose $i_1,i_2,\ldots,i_b$ are the positions where new letters made their first appearances in $\boldsymbol {\omega}$. 

Denote by $\mathcal{E}_{i_j}$ the partition block where 
$\pi_{i_{j}}$ belongs. Also let us denote $\mathcal{E}_0$ to be the partition block where $\pi(0)$ belongs. Note that any two such blocks are either equal or disjoint. 
For example for the word $abcabc$, $\mathcal{E}_{0},$
$\mathcal{E}_{1}$, $\mathcal{E}_{2}$ are distinct and 
$\mathcal{E}_{3}=\mathcal{E}_{0}$.

%
	

    Note that  if the number of partition blocks is exactly equal to $b+1$ then 
  $$\displaystyle \lim_{n \rightarrow \infty}\frac{| \Pi(\boldsymbol {\omega})|}{n^{b+1}}=1.$$ 
On the other hand, if the number of partitions blocks is strictly less than $b+1$ then 
$| \Pi(\boldsymbol {\omega})|\leq n^{b}$ and hence  
  $$\displaystyle \lim_{n \rightarrow \infty}\frac{| \Pi(\boldsymbol {\omega})|}{n^{b+1}}=0.$$ 
We shall now investigate when either of this happens.

  \begin{lemma}\label{lem:wig1}
	Let  $\boldsymbol {\omega}$ have  $b$ distinct letters and satisfy
\begin{equation}\label{wigner1}
\frac{|\Pi(\boldsymbol {\omega})|}{n^{b+1}}\to 1 \ \  \text{ as } n \rightarrow \infty.
\end{equation}
 Then the following are true:
	\begin{enumerate}
	\item[(i)] The last new letter of the word appears only in pure blocks of even sizes.  
	\item[(ii)] The successive appearances of the same letter obey the $(C2)$ constraint.
	\item[(iii)] Between successive appearances of each letter, other letters can only appear equal number of times in the odd and even positions.
	\end{enumerate}
	Hence $\boldsymbol {\omega}$ is a special symmetric word.
\end{lemma}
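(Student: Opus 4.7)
The plan is to interpret each circuit as a walk on $[n]$ and to read the hypothesis $|\Pi(\boldsymbol{\omega})|/n^{b+1}\to 1$ as saying that the $b+1$ generating vertices of $\boldsymbol{\omega}$ can be chosen freely and independently, with no identification forced by the word. Every repeated letter imposes either a $(C1)$ or a $(C2)$ equation on the $\pi$-values, and I aim to show that any violation of (i)--(iii) makes one of these equations propagate into a nontrivial identity among generating vertices, thereby reducing $|\Pi(\boldsymbol{\omega})|$ to $O(n^b)$ and contradicting (\ref{wigner1}) together with (\ref{cardiality of word}).

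I would first prove (ii). Suppose a letter first appears at position $i$ and next repeats at position $j$ under $(C1)$, so $\pi(i-1)=\pi(j-1)$ and $\pi(i)=\pi(j)$. Reading the word left-to-right and tracking, for each $m\le j-1$, the set of earlier $\pi(\cdot)$-values to which $\pi(m)$ is already forced to be equal, $\pi(j-1)$ is either a generating vertex or is already tied to some earlier $\pi(m)$. In either case, the equation $\pi(j-1)=\pi(i-1)$ is a new identity between quantities that were previously free, and by a short induction on $j-i$ it always produces an identification of generating vertices (the base case being simply that $i-1$ and $j-1$ contribute distinct generating vertices). This forces $|\Pi(\boldsymbol{\omega})|=O(n^b)$, contradicting the hypothesis, so every repetition must in fact be $(C2)$.

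Having (ii) in hand, I would prove (i). Let $a$ be the last new letter, introduced at position $i_b$, so $\pi(i_b)$ is a generating vertex and every position $>i_b$ carries a repetition. If some $a$-occurrence at $j>i_b$ is separated from $i_b$ by at least one non-$a$ letter, the chain of $(C2)$ constraints from the intermediate repetitions traces $\pi(j-1)$ back to some vertex in $\{\pi(0),\ldots,\pi(i_b-1)\}$, whereas the $(C2)$ constraint for $a$ at $j$ forces $\pi(j-1)=\pi(i_b)$; combining these identifies $\pi(i_b)$ with an earlier generating vertex, contradiction. Hence each post-$i_b$ occurrence of $a$ is contiguous with $i_b$, i.e.\ lies in a pure block. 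The parity of that pure block's length is pinned by an analogous propagation: a pure $a$-block of odd length starting at $i_b$ returns the walk to $\pi(i_b-1)$ with a parity mismatch that any subsequent repeated edge touching $\pi(i_b)$ must resolve by equating $\pi(i_b)$ with a prior vertex. Iterating the argument after peeling off this pure block reduces to a word on $b-1$ letters satisfying the same hypothesis, and establishes (i) for every pure block in which $a$ appears.

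Finally, (iii) is a parity-bookkeeping consequence of the walk structure: between two successive appearances of any letter, the intervening sub-walk must close from one endpoint of the repeated edge to the other, and under the $(C2)$ rule each other letter traverses its associated edge in one direction from odd positions and in the opposite direction from even positions; thus the counts of odd- and even-position occurrences of every intermediate letter have to balance, for otherwise the sub-walk fails to close and one is again forced into an identification of generating vertices. The main obstacle will be making the propagation step in (ii) airtight, since the identifications accumulated by earlier $(C2)$ repetitions must be tracked carefully to rule out loopholes; the cleanest route appears to be strong induction on the position $j$ of the first $(C1)$ repetition, leveraging the tree-like structure that $(C2)$-only repetitions induce on the generating vertices.
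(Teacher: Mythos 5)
Your overall strategy coincides with the paper's: read \eqref{wigner1} as saying that the $b+1$ vertex-partition blocks $\mathcal{E}_0,\mathcal{E}_{i_1},\ldots,\mathcal{E}_{i_b}$ are pairwise distinct, and show that any violation of (i)--(iii) forces two of them to coincide, whence $|\Pi(\boldsymbol{\omega})|\leq n^b$, contradicting the hypothesis. Your treatment of (i) is essentially the paper's argument. The paper, however, obtains (ii) not by your direct claim but by peeling off the last letter and inducting on the number of distinct letters, and it obtains (iii) through a separate auxiliary lemma; it is in these two places that your sketch has real problems.

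The genuine gap is in (iii). The combinatorial heart of the paper's proof is Lemma~\ref{lemma:result}: between two successive occurrences of a letter $y$, the intermediate letters $x_1,\ldots,x_m$ cannot all be distinct. This is proved by tracking which block $\mathcal{E}_{i_t}$ each vertex $\pi(s+i)$ is forced into and deriving a strictly monotone chain $t_m<t_{m-1}<\cdots<t_1<j$ (or the reverse), which forces two blocks to coincide. Your replacement --- ``the counts of odd- and even-position occurrences have to balance, for otherwise the sub-walk fails to close'' --- is not a proof and its stated reason is wrong: the sub-walk between the two $y$'s \emph{does} close (that is exactly what the $(C2)$ constraint on $y$ enforces), and a closed walk traversing $m$ distinct edges once each is perfectly realizable on $[n]$; what excludes it is the forced coincidence of generating-vertex blocks, which requires the chain argument. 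You also omit the extraction step (choose the intermediate letter whose second-to-last occurrence is rightmost, show via Lemma~\ref{lemma:result} that its last two occurrences are adjacent, delete that pair, and iterate), which is what upgrades ``even number of occurrences'' to ``equal numbers in odd and even positions.'' A secondary weakness is your proof of (ii): the assertion that a $(C1)$ constraint between successive occurrences ``always produces an identification of generating vertices'' by ``a short induction on $j-i$'' is under-justified, because whether $\pi(j-1)=\pi(i-1)$ is a \emph{new} identity depends on which block the prefix constraints have already placed $\pi(j-1)$ in, and that depends on the structure of the intervening letters --- precisely the content of (iii). Proving (ii) first and independently therefore risks circularity, which is presumably why the paper derives (ii) by repeatedly stripping the last new letter after (i) is established.
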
	
\begin{proof}
From the discussion before the lemma, there are $(b+1)$ partition blocks, $\mathcal{E}_{0},\mathcal{E}_{i_1},\mathcal{E}_{i_2},\ldots,\mathcal{E}_{i_b}$.
%

Fix any letter $x$ in $\boldsymbol {\omega}$. Then there exists $t\in \{1,2,\ldots,b\}$ such that $x$ first appears at the position $(\pi(i_t-1), \pi(i_t))$. Then we will say that  $x$ is  \textit{associated} with $\pi(i_t)$ and the partition block $\mathcal{E}_{i_{t}}$. Also we can say that $\pi(i_t-1)\in \mathcal{E}_{i_s}$ for some $s<t$.\\


\noindent\textit{Proof of (i)}. Suppose $z$ is the last new letter of $\boldsymbol {\omega}$ and appears at $(\pi(i_b-1),\pi(i_b))$-th position in $\boldsymbol {\omega}$ for the first time. Suppose this $z$ is followed by $x\neq z$. So $x$ is an old letter, say equal to the $j$th new letter of $\boldsymbol {\omega}$ where $j<b$. Then clearly 
either \\
(a) $\pi(i_b)=\pi(i_j)$, that is, $\pi(i_b)\in \mathcal{E}_{i_j}$, or \\
(b) $\pi(i_b)=\pi(i_j-1)$, that is, $\pi(i_b)\in \mathcal{E}_{i_t}$ (where $\pi(i_j-1) \in \mathcal{E}_{i_t}$) for some $t<j$. \\
In any case, $\mathcal{E}_{i_b}$ coincides with $\mathcal{E}_{i_m}$ for some $m\neq b$. As a result $|\Pi(\boldsymbol {\omega})|\leq n^b$ and $\boldsymbol {\omega}$ does not satisfy \eqref{wigner1}. So, $z$ has to be followed by itself, that is, it appears in a pure block.

 Now suppose, this pure block size is odd. 
Consider the last $z$ that appears in this block at $(\pi(i_b+s-1),\pi(i_b+s))$ where $s$ is even. Then
\begin{align*}
&\pi(i_b-1)=\pi(i_b+1)=\pi(i_b+3)= \cdots = \pi(i_b+s-1),\\
&\pi(i_b)=\pi(i_b+2)=\pi(i_b+4)= \cdots = \pi(i_b+s).
\end{align*}
Therefore $\pi(i_b+s) \in \mathcal{E}_{i_b}$. If this $z$ is followed by an old letter then by the same arguments as given earlier, we arrive at a contradiction. 
Hence this block size has to be even and $s$ is odd. 

If $z$ does not appear elsewhere, we are done. So suppose after this block, $z$ appears again next at the $(\pi(t-1),\pi(t))$-th position where $t-(i_b+s)>1$. Then there are two possibilities: either (a) $\{\pi(t-1)= \pi(i_b),   \pi(t)=\pi(i_b-1)\}$
 or (b) $\{\pi(t-1)= \pi(i_b-1),  \pi(t)=\pi(i_b)\}$. As an old letter appears at the $(\pi(t-2),\pi(t-1))$-th position,  
if (a) happens, then by the argument as before, $\mathcal{E}_{i_b}$ coincides with one of the other partition blocks $\mathcal{E}_{0}=\mathcal{E}_{i_0},\ldots,\mathcal{E}_{i_{b-1}}$. This  contradicts \eqref{wigner1}.
So, the only possibility is (b),  
i.e., these two $z$'s satisfy the $(C2)$ constraint. Now we are back to the same situation as that we had for the first appearance of $z$ in the first block of $z$'s. Repeating that argument,  
it follows that 
this $z$ is also followed by an odd number of $z$. Same argument holds for all blocks. 
This completes the proof of (i).\\
 
\noindent \textit{Proof of (ii)}. Now we drop all these $z$'s and consider the reduced word
$\boldsymbol {\omega}'$. During the course of the proof of (i), we have already seen that successive appearances of two $z$'s obey the $(C2)$ constraint and $z$ appears in even pure blocks. Thus $\pi(i_b)$ is the only generating vertex that has been dropped in the process of reduction. Also  all the vertices in the partition block $\mathcal{E}_{i_{b}}$ are dropped. 
Therefore $\boldsymbol {\omega}'$ has $b$ distinct partition blocks (as only one of the partition block is reduced), 
$\mathcal{E}_{i_0},\mathcal{E}_{i_1},\mathcal{E}_{i_2},\ldots,\mathcal{E}_{i_{b-1}}$. 

Now let $y$ be the last new letter in $\boldsymbol {\omega}'$. As $\boldsymbol {\omega}'$ satisfies $\displaystyle \lim_{n \rightarrow \infty} \frac{|\Pi(\boldsymbol {\omega}')|}{n^b}=1$, the argument given earlier for $z$ can be repeated to show that 
$y$ must appear in pure even blocks and any two successive appearances of $y$ must obey the $(C2)$ constraint. 

 Clearly this argument can be repeated sequentially to complete the proof of (ii).\\ 
 
\noindent \textit{Proof of (iii)}. We shall use the following lemma to prove (iii). 
For the moment we assume this lemma and complete the proof of (iii).  
We shall give the proof of Lemma \ref{lemma:result} after that.
\begin{lemma}\label{lemma:result}
Suppose $\boldsymbol {\omega}$ is as stated in Lemma \ref{lem:wig1}. Suppose $x_1,x_2,\ldots,x_m$ appear in between two successive appearances of the same letter, say $y$. Then, all $x_i$ cannot be distinct.
\end{lemma}
Let us focus on the string of letters $z_1,z_2,\ldots,z_m$ in between two successive appearances of $x$ at $(\pi(s),\pi(s+1))$ and $(\pi(s+m+1),\pi(s+m+2))$-th positions.

By Lemma \ref{lemma:result}, there exists $j$ such that $z_j$ has appeared at least twice. 
For any such letter $z_j$, let $p_j$ be the second last position of its appearance (in between the two $x$'s considered). 
Let $p=\max \{p_l:z_l$ has appeared more than once in between the two $x'\text{s}\}$. 
There is some $z_i$ such that $p_i=p$. That is, $z_i$ appears for the second last time at the $p$-th position. Consider the $z_i$ at the $p$-th position and the last $z_i$ (in between the two $x$'s considered). 
By our choice of $p$, no letter can appear more than once in between. Now we invoke Lemma \ref{lemma:result} to conclude that 
there are no other letters in between these $z_i$'s. So these two $z_i$'s appear in consecutive positions, and hence once in an odd position and once in an even position.  
Now we drop this pair of $z_i$'s. If this $z_i$ does not appear elsewhere in the word then the partition block corresponding to $z_i$ is dropped and for the reduced word $\boldsymbol {\omega}'$ with $(b-1)$ distinct letters, $\displaystyle \lim_{n \to \infty}\frac{|\Pi(\boldsymbol {\omega}')|}{n^b}=1$. So we repeat the process. 

If this $z_i$ appears somewhere else in $\boldsymbol {\omega}$, then the reduced word $\boldsymbol {\omega}'$ still has $b$ distinct letters and $\displaystyle \lim_{n \to \infty}\frac{|\Pi(\boldsymbol {\omega}')|}{n^{b+1}}=1$. This is because none of the partition blocks have been dropped or have coincided  in the process of dropping the two $z_i$'s as this pair of $z_i$ are in a  $(C2)$ relation and as a result only one member of a partition block has been omitted. So here again $\boldsymbol {\omega}'$ retains the properties of $\boldsymbol {\omega}$ and we may repeat the process. Continuing this process, 
either there are no letters left in between the $x$'s or all letters that remain are distinct. But the latter 
is not possible due to Lemma \ref{lemma:result}. 
So, we conclude that each distinct $z_j$ appears even number of times in between the two $x$'s, and same number of times in odd and even positions. This completes the proof of  (iii).

Now from (i) and (iii), we conclude that $\boldsymbol {\omega}$ is a special symmetric word.
\end{proof}

\begin{proof}[Proof of Lemma \ref{lemma:result}]  Let $y$ be any letter of $\boldsymbol {\omega}$ with successive occurrences at $(\pi(s),\pi(s+1))$ and $(\pi(s+m+1),\pi(s+m+2))$-th positions. Let $x_1,x_2,\ldots,x_m$ be the letters in between these $y$'s, all distinct. 
Note that $x_i$ appears at $(\pi(s+i),\pi(s+i+1))$-th position. Suppose 
$\pi(s+1) \in \mathcal{E}_{i_j}$. Now there are two cases:\\

\noindent Case 1: Suppose $\pi(s+2)$ belongs to the partition block associated to $x_1$. As $x_1$ and $x_2$ are distinct, $\pi(s+2)$ does not belong to the partition block associated to $x_2$. Therefore, $\pi(s+3)$ belongs to the partition block associated to $x_2$. Again as $x_{3}$ is different from $x_2$, $\pi(s+4)$ belongs to the partition block associated to $x_{3}$. Therefore, repeating this argument for all $x_i$'s, we see that for each $1\leq i \leq m$, $\pi(s+i+1)$ belongs to the partition block associated to $x_i$. Now we know that $\pi(s+1)=\pi(s+m+1)$ since the two appearances of $y$ are in $(C2)$. Therefore $\pi(s+m+1) \in \mathcal{E}_{i_j}$ for some $j$. Also, $\pi(s+m+1)$ belongs to the partition block associated to $x_m$. Thus the partition block associated to $x_m$ coincides with $\mathcal{E}_{i_j}$. Now for $\boldsymbol {\omega}$ to satisfy \eqref{wigner1}, $x_m$ must be the $j$th new letter of $\boldsymbol {\omega}$. Therefore, $\pi(s+m+1)=\pi(i_j)$ and $\pi(s+m)=\pi(i_j-1)$. Hence we have $\pi(s+m) \in \mathcal{E}_{i_{t_1}}$, where $t_1 < j$. Proceeding in this manner we have
\begin{align*}
&\pi(s+m-1) \in \mathcal{E}_{i_{t_2}} \ \ \text{where } \ \ t_2 < t_1; \ \  \pi(s+m-2) \in \mathcal{E}_{i_{t_3}} \ \ \text{where } \ \ t_3 < t_2;\ \ldots;\\ 
&\pi(s+2) \in \mathcal{E}_{i_{t_{m-1}}} \ \ \text{where } \ \ t_{m-1} < t_{m-2}; \ \  \pi(s+1) \in \mathcal{E}_{i_{t_{m}}} \ \ \text{where } \ \ t_{m} < t_{m-1}<j. \\
\end{align*} 
This shows that $\mathcal{E}_{i_j}$ and $ \mathcal{E}_{i_{t_m}}$ coincide for some $t_m \neq j$. So in this case, $| \Pi(\omega)|\leq n^b$, which contradicts  \eqref{wigner1}.\\

\noindent Case 2: Suppose $\pi(s+1)$ belongs to partition block associated to $x_1$. Now we know that $\pi(s+1)=\pi(s+m+1)$ since the two appearances of $y$ are in $(C2)$. As $x_1$ and $x_m$ are distinct, $\pi(s+m+1)$ does not belong to the partition block associated to $x_m$.  Thus, $\pi(s+m)$ belongs to the partition block associated to $x_m$. So, $\pi(s+m-1)$ belongs to the partition block associated to $x_{m-1}$. Therefore, we see that for each $1\leq i \leq m$, $\pi(s+i)$ belongs to the partition block associated to $x_i$. 
 
 We have observed that $\pi(s+m)$ belongs to the partition block associated to $x_m$. We know that $\pi(s+m)=\pi(i_{t_1})$ for some $t_1 \in \{1,2,\ldots,b\}$. Then, $\pi(s+m+1)=\pi(i_{t_1}-1)$. Now recall that $\pi(s+1)= \pi(s+m+1)$ and $\pi(s+1)\in \mathcal{E}_{i_j}$. So, $\pi(s+m+1)\in \mathcal{E}_{i_j}$. Therefore, $\pi(i_{t_1}-1)\in \mathcal{E}_{i_j}$. For the partition block associated to each letter to be distinct, we must have, $\pi(s+m) \in \mathcal{E}_{i_{t_1}}$ where $t_1 > j $.  Proceeding in this manner we have
\begin{align*}
&\pi(s+m-1) \in \mathcal{E}_{i_{t_2}} \ \ \text{where } \ \ t_2 > t_1;\ \pi(s+m-2) \in \mathcal{E}_{i_{t_3}} \ \ \text{where } \ \ t_3 > t_2; \ \ldots;\\ 
&\pi(s+2) \in \mathcal{E}_{i_{t_{m-1}}} \ \ \text{where } \ \ t_{m-1} > t_{m-2};\ \pi(s+1) \in \mathcal{E}_{i_{t_{m}}} \ \ \text{where } \ \ t_{m} > t_{m-1}>j. \\
\end{align*} 
This shows that $\mathcal{E}_{i_j} \cap \mathcal{E}_{i_{t_m}} \neq \phi$ for some $t_m \neq j$. 
 But then $| \Pi(\omega)|\leq n^b$ and it contradicts \eqref{wigner1}. This completes the proof of Lemma \ref{lemma:result}.
\end{proof}
The following corollary follows from Lemma \ref{wigner1}.
\begin{cor}\label{cor:non-ss(k)}
Suppose the word $\boldsymbol {\omega}$ with $b$ distinct letters does not belong to $SS(2k)$ for any $k$. Then $|\Pi(\boldsymbol {\omega})| \leq n^b$. Hence $\displaystyle \lim_{n \rightarrow \infty}\frac{1}{n^{b+1}} | \Pi(\boldsymbol {\omega})|= 0$.
\end{cor}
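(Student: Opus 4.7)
The plan is to deduce this corollary directly from the contrapositive of Lemma \ref{lem:wig1}, combined with the dichotomy that was established in the paragraph immediately preceding that lemma. Recall: the vertices $\pi(i)$ of any circuit in $\Pi(\boldsymbol{\omega})$ are forced to lie in at most $b+1$ partition blocks, those blocks being the (possibly coinciding) blocks $\mathcal{E}_0, \mathcal{E}_{i_1}, \ldots, \mathcal{E}_{i_b}$ of the generating vertices. Moreover, the circuit is completely determined by the generating vertices. Hence a clean dichotomy holds: either the number of distinct partition blocks equals exactly $b+1$ (in which case all generating vertices are free and $|\Pi(\boldsymbol{\omega})|/n^{b+1}\to 1$), or it is strictly less than $b+1$ (in which case $|\Pi(\boldsymbol{\omega})|\leq n^{b}$, so that $|\Pi(\boldsymbol{\omega})|/n^{b+1}\to 0$).

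Now suppose, for a fixed word $\boldsymbol{\omega}$ of length $m$ with $b$ distinct letters, that $\boldsymbol{\omega}\notin SS(2k)$ for any $k$. First I would argue by contradiction: assume the first alternative of the dichotomy above, namely $|\Pi(\boldsymbol{\omega})|/n^{b+1}\to 1$, holds. Then Lemma \ref{lem:wig1} applies and concludes that $\boldsymbol{\omega}$ is a special symmetric word, so $\boldsymbol{\omega}\in SS(m)$. Since $m$ is some fixed integer, this contradicts the hypothesis that $\boldsymbol{\omega}$ belongs to no $SS(2k)$. (If $m$ is odd, the same contradiction arises because $SS(m)=\emptyset$, while Lemma \ref{lem:wig1} would still force $\boldsymbol{\omega}$ to be special symmetric.)

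Therefore only the second alternative of the dichotomy can hold, giving at once $|\Pi(\boldsymbol{\omega})|\leq n^{b}$ and hence $\lim_{n\to\infty} n^{-(b+1)}|\Pi(\boldsymbol{\omega})|=0$. I do not anticipate any technical obstacle here: the entire content is already packaged into the combinatorial analysis of generating vertices and into Lemma \ref{lem:wig1}, so the corollary is a one-step logical consequence once the dichotomy is made explicit.
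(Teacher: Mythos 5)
Your proposal is correct and matches the paper's intent exactly: the paper gives no explicit proof, simply asserting that the corollary follows from Lemma \ref{lem:wig1}, and the intended argument is precisely the contrapositive you spell out, using the dichotomy (number of partition blocks equal to $b+1$ versus strictly fewer) stated just before that lemma. Your parenthetical handling of odd-length words is a reasonable extra precaution and does not change the argument.
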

We now establish  the converse of  Lemma \ref{lem:wig1}. This will identify the words which contribute in the limiting spectral distribution of $W_n$. 
\begin{lemma}\label{lem:wig2}
Suppose $\boldsymbol {\omega}\in SS(2k)$ has  $b$ distinct letters. 
Then, $\displaystyle \lim_{n \rightarrow \infty}\frac{1}{n^{b+1}} | \Pi(\boldsymbol {\omega})|= 1$. 
\end{lemma}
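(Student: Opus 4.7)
The plan is to argue by induction on the number of distinct letters $b$, exploiting the recursive description of $SS(2k)$ given in part (b) of the formal definition. The upper bound $|\Pi(\boldsymbol{\omega})|\leq n^{b+1}$ is immediate from \eqref{cardiality of word}, so the task reduces to exhibiting $n^{b+1}(1+o(1))$ distinct circuits in $\Pi(\boldsymbol{\omega})$. The base case $b=1$ corresponds to the word $a^{2k}$: the $(C2)$ constraints force $\pi(2m)=\pi(0)$ and $\pi(2m+1)=\pi(1)$ for every $m$, giving exactly $n^{2}=n^{b+1}$ circuits.

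For the inductive step, let $z$ be the last new letter of $\boldsymbol{\omega}$, and let $V_{r}=I_{1}\cup\cdots\cup I_{l}$ denote its positions, where each $I_{j}=[t_{j}+1,t_{j}+2l_{j}]$ is an even-length interval. By part (b)(iii) of the formal definition, the reduced word $\boldsymbol{\omega}'$ obtained by deleting all $z$-positions lies in $SS(2k-|V_{r}|)$ with $b-1$ distinct letters, and the induction hypothesis yields $|\Pi(\boldsymbol{\omega}')|=n^{b}(1+o(1))$. Given any $\pi'\in\Pi(\boldsymbol{\omega}')$ and any value $v\in\{1,\ldots,n\}$ to serve as the generating vertex $\pi(i_{b})$ for $z$, I would reconstruct a candidate $\pi$ of length $2k$ by copying $\pi'$ at the non-$z$ positions (with the obvious re-indexing) and setting $\pi(t_{j}+2m+1)=v$, $\pi(t_{j}+2m)=\pi(t_{j})$ inside each $I_{j}$, in accordance with the $(C2)$ rule for successive occurrences of $z$ established in Lemma \ref{lem:wig1}. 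Different pairs $(\pi',v)$ producing valid candidates will yield distinct elements of $\Pi(\boldsymbol{\omega})$, so the count of admissible pairs gives the desired lower bound.

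The main obstacle is verifying that the construction produces an element of $\Pi(\boldsymbol{\omega})$ for most choices of $(\pi',v)$. Two issues can arise: first, the boundary values $\pi'(t_{1}),\ldots,\pi'(t_{l})$ (read off $\pi'$ at the positions just preceding each $I_{j}$) might fail to all agree, in which case the various $z$-edges would not be the same unordered pair; second, the chosen $v$ might coincide with an existing vertex value of $\pi'$ in a way that makes a non-$z$ edge equal to $\{\pi(t_{1}),v\}$, thereby collapsing $z$ into an older letter and altering the underlying word. I expect the first issue to be automatic: property (ii) of Definition \ref{ss(2k)} asserts that between consecutive intervals $I_{j}$ and $I_{j+1}$ every other block intersects the gap in an even number of positions, equally distributed between odd and even slots; combined with part (b)(iii) applied iteratively, this balance forces the circuit $\pi'$ to return to the same vertex before each $I_{j}$, i.e.\ $\pi'(t_{j})=\pi'(t_{1})$ for all $j$. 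For the second issue, at most $O(1)$ values of $v$ (those equal to one of the finitely many distinct vertex values taken by $\pi'$) can create a spurious edge coincidence, leaving $n-O(1)$ admissible choices of $v$ for each $\pi'$. Putting these together gives $|\Pi(\boldsymbol{\omega})|\geq(n-O(1))\,|\Pi(\boldsymbol{\omega}')|=n^{b+1}(1+o(1))$, which combined with the upper bound closes the induction.
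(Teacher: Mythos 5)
Your architecture is the same as the paper's: induction on the number of distinct letters $b$, with the inductive step consisting of deleting the last new letter $z$, applying the hypothesis to the reduced word $\boldsymbol{\omega}'$, and gaining one factor of $n$ from the free choice of the generating vertex of $z$. Your base case is correct, and your appeal to part (b)(iii) of the formal description of $SS(2k)$ to conclude $\boldsymbol{\omega}'\in SS(2k-|V_r|)$ is a legitimate shortcut past the paper's more laborious route (the paper instead proves a separate ``Claim'' to show that the last new letter of $\boldsymbol{\omega}'$ still appears in pure even blocks, so that the induction hypothesis applies). Your treatment of the second obstacle --- spurious coincidences of the new $z$-edge with old edges, excluded for all but $O(1)$ values of $v$ --- is fine and is in fact more explicit than anything in the paper.

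The genuine gap is the first obstacle, which you declare ``automatic'': the assertion that every $\pi'\in\Pi(\boldsymbol{\omega}')$ satisfies $\pi'(t_1)=\cdots=\pi'(t_l)$ at the positions immediately preceding the intervals of $z$. This is the crux of the reconstruction: without it the $z$-edges built in distinct intervals $I_j$ need not coincide, the candidate circuit fails to lie in $\Pi(\boldsymbol{\omega})$, and the lower bound $|\Pi(\boldsymbol{\omega})|\geq(n-O(1))|\Pi(\boldsymbol{\omega}')|$ collapses. It does not follow from Definition \ref{ss(2k)}(ii) by itself: that condition is a parity count on the \emph{word}, whereas what you need is a property of \emph{every circuit} realizing the reduced word, namely that the sub-walk across each gap between consecutive intervals of $z$ returns to its starting vertex. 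The clean route from one to the other is to note that the walk returns whenever each edge in the gap is traversed equally often in both directions, which in turn holds when successive occurrences of every letter are forced into the $(C2)$ pattern --- and establishing that forcing for words in $SS(2k)$ is itself an inductive argument of the same flavour as Lemma \ref{lemma:result} and parts (ii)--(iii) of Lemma \ref{lem:wig1}, run in the converse direction. You should either strengthen your induction hypothesis to carry this $(C2)$/return property along with the count, or prove it as a standalone lemma; asserting it as a consequence of the parity condition alone is not a proof. (To be fair, the paper's own write-up is equally terse at the corresponding moment --- ``the generating vertex of $z$ has $n$ choices'' --- but since you explicitly isolate this step as an issue and then dismiss it, the missing argument has to be flagged.)
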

\begin{proof}
We use induction on the number of distinct letters to prove this lemma.
First note that the length of the word is even.
  
When $b=1$, all letters are identical.
In this case $\pi(0)$ and $\pi(1)$ can be chosen in $n \times n=n^2$ ways. Given these vertices, all other vertices have exactly one choice each.
Therefore, $| \Pi(\boldsymbol {\omega})|= n^{2}$ and hence the result is true for $b=1$.

Now suppose, that the result is true for words with $(b-1)$ distinct letters. We shall prove the result for all words with $b$ distinct letters.

By property (i) the last new letter of the word, say $z$, 
appears in pure even blocks. Drop these blocks. 
Now the reduced word $\boldsymbol {\omega}'$ has $(b-1)$ distinct letters. 
For the reduced word 
$\boldsymbol {\omega}'$, 
property (ii) and (iii) are clearly true. 
We now show that $\boldsymbol {\omega}'$ also satisfies property (i) of Lemma \ref{lem:wig1}. Then we can apply our induction hypothesis on $\boldsymbol {\omega}'$. 

Let the last new letter in $\boldsymbol {\omega}'$ be $y$. If $y$ appears only in pure even blocks in $\boldsymbol {\omega}$, then we are done.

Now, suppose that instead,
\textit{there exists a string of $y$'s (from left to right) of odd length in $\boldsymbol {\omega}$}. Under this scenario, we make the following claim:\\

\noindent\textbf{Claim:} There is a string $\boldsymbol {\omega}_1$ in $\boldsymbol {\omega}$ such that
\begin{enumerate}
\item[(i)] It starts with the string of $y$'s of odd length which is followed by a string of $z$'s of even length.
\item[(ii)] It ends with a $y$-string of odd length and cannot be followed by $z$.
\item[(iii)] In between the above two strings there will be strings of $y$ and $z$ of even (may be 0) lengths  in $\boldsymbol {\omega}_1$.
\end{enumerate}
Suppose this result is true. Now after we have dropped the $z$'s from $\boldsymbol {\omega}$, $y$'s have to appear in pure even blocks in $\boldsymbol {\omega}'$. Therefore, $\boldsymbol {\omega}'$ satisfies property (i), (ii) and (iii) of Lemma \ref{lem:wig1}. So, by induction hypothesis, $\displaystyle \lim_{n \to \infty}\frac{| \Pi(\boldsymbol {\omega}')|}{n^b}=1$. The generating vertex of $z$ in $\boldsymbol {\omega}$ has $n$ choices. Therefore, $\displaystyle \lim_{n \rightarrow \infty}\frac{1}{n^{b+1}} | \Pi(\boldsymbol {\omega})|= 1$. This completes the proof of the lemma. 
\end{proof}
Now we prove our claim.\\

\noindent \textit{Proof of the Claim}: Suppose the first pure block of $y$ of odd length (may be of length one) is found. If it is followed by an old letter, say $x$, then between this $x$ and another $x$ just preceding it, there are an odd number of $y$'s, which violates property (iii) of Lemma \ref{lem:wig1}. Therefore, this string of $y$'s can only be followed by $z$ in $\boldsymbol {\omega}$. As $z$ always appears in even pure blocks in $\boldsymbol {\omega}$, this string of $z$'s is of even length. If this string of $z$'s is followed by an older letter say $x$ (not $y$), then the number of $y$'s appearing in between two successive appearances of $x$ is odd which cannot be true because $\boldsymbol {\omega}$ satisfies property (iii) of Lemma \ref{lem:wig1}. So the string of $z$'s mentioned here has to be followed by a string of $y$'s. Then we can have the following two cases:\\

\noindent Case 1: \textit{This string of $y$'s is of odd length}. Then, this string cannot be followed by a string of $z$'s because that would say that there are odd number of $y$'s in between two successive appearances of $z$. So this string has to be followed by an old letter (older than $z$ and $y$), say $x$. Thus we get the sub-word $\boldsymbol\omega_1$ as desired.\\

\noindent Case 2: \textit{This string of $y$'s is of even length}. This string of $y$'s cannot be followed by an older letter say $x$, as then between this $x$ and another $x$ just preceding it, there are odd number of $y$'s which violates property (iii) of Lemma \ref{lem:wig1}. So this string can only be followed by a string of $z$'s. Now as $y$ appears an even number of times in $\boldsymbol {\omega}$, there has to be another pure block of $y$ of odd length (may be of length one). Note that till we obtain such a string,  there can be strings of $y$ and $z$ of even lengths and an older letter say $x$ cannot appear. After we get the string of $y$ of odd length, we see that it cannot be followed by $z$ by the same argument as in case 1. Hence we get the  sub-word $\boldsymbol\omega_1$ as desired.
This completes the proof of the claim.\\

Note that from the above proof it is clear that if we drop all appearances of the last distinct letter of any word $\omega\in SS_{b}(2k)$, then the reduced word belongs to $SS_{b-1}(2j)$ for some $j < k$.  
\section{Proofs of Theorem \ref{thm:maingk} and Corollary \ref{thm:main}}\label{proof of the theorem}
The following elementary result shall be useful.  
See Section 1.2 of \citep{bose2018patterned} for its proof.
\begin{lemma}\label{lem:genmoment}
Suppose $A_n$ is any sequence of symmetric random matrices such that the following conditions hold:
\begin{enumerate}
\item[(i)] For every $k\geq 1$, $\frac{1}{n}\mathbb{E}[\Tr (A_n)^k] \rightarrow \alpha_k$ as $n \rightarrow \infty$.
\item[(ii)] $\displaystyle \sum_{n=1}^{\infty}\frac{1}{n^4}\mathbb{E}[\Tr(A_n^k) \ - \ \mathbb{E}(\Tr(A_n^k))]^4  < \infty$ for every $k \geq 1$.
\item[(iii)] The sequence $\{\alpha_k\}$ is the moment sequence of a unique probability measure $\mu$.
\end{enumerate}
Then $\mu_{A_n}$ converges to $\mu$
weakly almost surely.
\end{lemma}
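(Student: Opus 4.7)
The plan is to apply the classical method of moments, using the fact that the $k$th moment of the empirical spectral distribution is exactly $\frac{1}{n}\Tr(A_n^k)$. So, writing $\beta_k(\mu_{A_n}) = \frac{1}{n}\Tr(A_n^k)$, condition (i) is simply the statement that $\mathbb{E}[\beta_k(\mu_{A_n})] \to \alpha_k$ for every fixed $k \geq 1$.

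Next, I would upgrade the convergence of the expected moments to almost sure convergence of the random moments themselves. For every fixed $k$ and $\veps > 0$, Markov's inequality gives
\[
\mathbb{P}\Bigl(\bigl|\beta_k(\mu_{A_n}) - \mathbb{E}\beta_k(\mu_{A_n})\bigr| > \veps\Bigr) \leq \frac{1}{\veps^4 n^4}\,\mathbb{E}\bigl[\Tr(A_n^k) - \mathbb{E}\Tr(A_n^k)\bigr]^4,
\]
and summing over $n$ gives a finite bound by condition (ii). The Borel--Cantelli lemma then yields $\beta_k(\mu_{A_n}) - \mathbb{E}[\beta_k(\mu_{A_n})] \to 0$ almost surely, and combined with (i), $\beta_k(\mu_{A_n}) \to \alpha_k$ almost surely for each fixed $k$. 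Taking the countable intersection over $k \in \mathbb{N}$ of these probability-one events preserves probability one, so on a single event of probability one, every moment of $\mu_{A_n}$ converges to $\alpha_k$.

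Finally, by condition (iii) the sequence $\{\alpha_k\}$ is the moment sequence of a unique probability measure $\mu$. A standard result (the method of moments for probability measures) then asserts that if the moments of $\mu_{A_n}$ converge to those of $\mu$, and $\mu$ is determined by its moments, then $\mu_{A_n} \to \mu$ weakly. Applying this on the probability-one event constructed above gives weak almost sure convergence, completing the proof.

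The only mildly delicate point is the pooling of the countably many null sets from Borel--Cantelli across all $k$, and the appeal to the moment-determinacy conclusion (iii) to pass from moment convergence to weak convergence of measures; neither is truly a serious obstacle, which is consistent with the lemma being cited as an elementary tool.
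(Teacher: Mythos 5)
Your proposal is correct and is essentially the standard argument that the paper relies on (the paper does not prove the lemma itself but cites Section 1.2 of Bose (2018), where exactly this Markov--Borel--Cantelli--method-of-moments chain is carried out). The only point to state explicitly, which you have implicitly right, is that $\frac{1}{n}|\Tr(A_n^k)-\mathbb{E}\Tr(A_n^k)|>\varepsilon$ is the event $|\Tr(A_n^k)-\mathbb{E}\Tr(A_n^k)|>n\varepsilon$, which is where the factor $n^4$ in condition (ii) comes from after applying Markov's inequality at the fourth power.
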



Condition (ii) of Lemma \ref{lem:genmoment} is known as the \textit{fourth moment condition}. To verify this in all our cases, we shall need the following lemmata. We define  
\begin{align*}
Q_{k,4}^b = | \{& (\pi_1,\pi_2,\pi_3,\pi_4): \ell(\pi_i)=k; \pi_i, 1 \leq i \leq 4 \ \text{jointly- and cross-matched with }\\
 & b \text{ distinct edges or } b \text{ distinct letters across all } (\pi_i)_{1\leq i \leq 4}\}|.
\end{align*}
Here we put a new letter wherever a new edge (or $L-$value) appears across all the circuits $\pi_1,\pi_2,\pi_3$ and $\pi_4$. As the circuits are cross-matched out of the $4k$ places across $(\pi_i)_{1\leq i \leq 4}$, there can be at most $2k$ distinct edges or distinct letters. In $Q_{k,4}^b$ we consider all those quadruple of circuits with $b$ distinct edges and hence $b$ distinct letters.
\begin{lemma}\label{lem:moment}
There exists a constant C, such that,
\begin{equation}
Q_{k,4}^b \leq  C \ n^{b +2}\ .
\end{equation} 
\end{lemma}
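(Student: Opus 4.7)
The plan is to bound $Q_{k,4}^b$ by controlling the number of distinct vertex values across the four circuits, and then to extract the savings from the cross-matching hypothesis via an auxiliary graph on $\{1,2,3,4\}$.

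To each quadruple $(\pi_1,\pi_2,\pi_3,\pi_4)$ counted in $Q_{k,4}^b$ I will associate the multigraph $G$ with vertex set $V(G)=\bigcup_{i,j}\{\pi_i(j)\}$ and edge set equal to the $b$ distinct unordered pairs $\{\pi_i(j-1),\pi_i(j)\}$. Each $\pi_i$ is a closed walk in $G$ and therefore lies inside a single connected component of $G$. Since every connected graph satisfies $|V|\leq |E|+1$, summing over components yields $|V(G)|\leq b+\mathrm{comp}(G)$, so it suffices to show $\mathrm{comp}(G)\leq 2$.

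For this I will introduce the auxiliary graph $H$ on $\{1,2,3,4\}$ with $i\sim j$ whenever $\pi_i$ and $\pi_j$ share at least one edge. The cross-matching hypothesis says that every vertex of $H$ has degree at least $1$, and a short case analysis shows that a graph on four vertices with no isolated vertex has at most two connected components (any three-component configuration is forced to contain an isolated vertex). Since two circuits in the same component of $H$ evidently lie in the same component of $G$, this gives $\mathrm{comp}(G)\leq \mathrm{comp}(H)\leq 2$ and therefore
\[
|V(G)|\leq b+2.
\]

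Given this vertex bound, the counting step proceeds exactly as in the proof of \eqref{cardiality of word}. Each quadruple is specified by (i) its \emph{combinatorial type}, namely the partition of the $4k$ edge-positions into the $b$ edge-classes together with an orientation of each class (which endpoint is first), and (ii) an assignment of values in $[n]$ to the at most $b+2$ distinct vertex equivalence classes. The number of combinatorial types depends only on $k$, and each type admits at most $n^{b+2}$ such assignments, yielding $Q_{k,4}^b\leq C\,n^{b+2}$ for a constant $C=C(k)$. The only real obstacle is the verification that cross-matching forces $\mathrm{comp}(H)\leq 2$; after that, everything reduces to routine bookkeeping on words and orientations.
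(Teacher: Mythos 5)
Your proof is correct, and it takes a genuinely different route from the paper's. The paper argues sequentially: it chooses the generating vertices of $\pi_1$ (at most $n^{k_1+1}$ choices), then runs a three-way case analysis on which of $\pi_2,\pi_3,\pi_4$ share a letter with $\pi_1$, using the freedom to rotate each circuit so that it "begins" at a shared edge to show that at most two of the four circuits get a free initial vertex; this yields $n^{(k_1+k_2+k_3+k_4)+2}=n^{b+2}$ in each case. You instead make the argument global and graph-theoretic: the union of the four closed walks spans a multigraph $G$ with $b$ distinct edges, so $|V(G)|\leq b+\mathrm{comp}(G)$, and cross-matching forces the auxiliary sharing graph $H$ on $\{1,2,3,4\}$ to have no isolated vertex, hence at most two components, hence $\mathrm{comp}(G)\leq 2$. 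This replaces the paper's case analysis with two one-line combinatorial facts and is the standard device in the random matrix literature for fourth-moment estimates; it is arguably cleaner and generalizes immediately to $m$ circuits (giving $|V|\leq b+\lfloor m/2\rfloor$), whereas the paper's argument is more hands-on but stays closer to the circuit/word formalism used throughout the rest of the proofs. One small presentational point: the component bound is best stated for the quotient graph whose vertices are the equivalence classes of vertex-positions forced by the combinatorial type (rather than the realized values in $[n]$), since that is what you actually count in the final step; the argument is identical there, so this is a matter of phrasing, not a gap.
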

 \begin{proof}
First observe that for any circuit $\pi$, if we set aside the first vertex $\pi(0)$, then the number of choices for the generating vertices is $n^b$, where $b$ is the number of distinct letters in $\pi$. Moreoever once all the generating vertices have been chosen, the number of choices for the non-generating vertices is at most one.
This observation will be used repeatedly. 

 Consider all circuits $(\pi_1,\pi_2,\pi_3,\pi_4)$ of length $k$ which are jointly-matched and cross-matched with $b$ distinct letters.
Let the number of new distinct letters appearing in $\pi_i$ be $k_i$, $i \in \{1,2,3,4\}$. So clearly, $k_1+k_2+k_3+k_4=b$.
We begin with the circuit $\pi_1$ and its vertices can be chosen in at most  $n^{k_1+1}$ ways. Now, since the circuits are cross-matched, there is another circuit with which $\pi_1$ shares a letter. So we have the following three cases:\\
 
\noindent{Case 1:} $\pi_1$ shares a letter with only one of the circuits, say $\pi_2$. Then, without loss of generality (since we are dealing with circuits) we can assume that $\pi_2$ begins with the letter it shares with $\pi_1$. Thus, $\pi_2(0)$ and $\pi_2(1)$ both cannot be chosen freely. Hence, having chosen the generating vertices of $\pi_1$, choosing from left to right, the vertices of $\pi_2$
can be chosen in at most $n^{k_2}$ ways. The generating vertices of $\pi_3$ can be chosen in at most $n^{k_3+1}$ ways. Now, since $\pi_4$ does not share any letter with $\pi_1$, it must share at least one letter with either $\pi_2$ or $\pi_3$. Again, we can assume that $\pi_4$ begins with this letter. 
Thus, having chosen the generating vertices of $\pi_1$, $\pi_2$ and $\pi_3$, the vertices $\pi_4(0)$ and $\pi_4(1)$ cannot be chosen freely and the number of choices for all generating vertices of $\pi_4$ is at most 
$n^{k_4}$. Therefore, all vertices of  $(\pi_1,\pi_2,\pi_3,\pi_4)$ can be chosen in at most $n^{(k_1+k_2+k_3+k_4)+2}=n^{b+2}$ ways.\\

\noindent{Case 2:} $\pi_1$ shares a letter with exactly two circuits, say $\pi_2$ and $\pi_3$. Then again we can assume that $\pi_2$ and $\pi_3$ begin with the letters that they share with $\pi_1$. Thus, $\pi_2(0),\pi_2(1),\pi_3(0),\pi_3(1)$ cannot be chosen freely. Hence, choosing from left to right, the generating vertices of $\pi_1(j)$, $\pi_2(j)$, $\pi_3(j)$ 
can be chosen in at most in $n^{(k_1+1)+k_2+k_3}$ ways. Now, $\pi_4$ shares a letter with either $\pi_2$ or $\pi_3$. Again, we can assume that $\pi_4$ begins with this letter.
Hence $\pi_4(0)$ and $\pi_4(1)$ cannot be chosen freely and so the generating vertices of $\pi_4(j)$
can be chosen in at most in $n^{k_4}$ ways. Therefore, all vertices of $(\pi_1,\pi_2,\pi_3,\pi_4)$ can be chosen in at most $n^{(k_1+k_2+k_3+k_4)+2}=n^{b+2}$ ways.\\

\noindent{Case 3:}  $\pi_1$ shares a letter with all the other three circuits. Then, arguing as in the other cases, 
the number of choices is now at most $n^{(k_1+1)+k_2+k_3+k_4}=n^{b+1}$.

This completes the proof of the lemma.	
\end{proof}
The $d_2$ metric 	helps us to deal with situations where entries may not have zero means and/or entries which may require truncation to ensure finiteness of all moments.

Let $F$ and $G$ be two distributions with finite second moment. Then the $d_2$ distance between them is defined as
\begin{align*}
d_2(F,G)= \Big[ \underset{(X \sim  F, Y \sim G)}{\inf} \mathbb{E}[X-Y]^2\Big]^{\frac{1}{2}},
\end{align*}
where $(X  \sim F, Y \sim G)$ denotes that the joint distribution of $(X,Y)$ is such that the marginal distributions are $F$ and $G$ respectively.
It is well-known that if $d_2(F_n,F) \rightarrow 0$ as $n \rightarrow \infty$, then $F_n \xrightarrow {\mathcal{D}} F$.
For a proof see Lemma 1.3.1 in \citep{bose2018patterned}.
This is applicable to ESD due to the following well-known lemma. For a proof of this lemma see Lemma 1.3.2 in \citep{bose2018patterned}.
\begin{lemma}\label{lem:metric}
Let $A$ and $B$ be $n \times n$ symmetric real matrices. Then
\begin{equation}
d_2^2(\mu_A, \mu_B) \leq \frac{1}{n} \Tr(A-B)^2.
\end{equation}
\end{lemma}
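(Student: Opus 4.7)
The plan is to invoke the Hoffman-Wielandt inequality, which provides exactly the bridge between eigenvalue differences and the Frobenius norm of the matrix difference. First, I would arrange the eigenvalues of $A$ and $B$ in non-decreasing order as $\lambda_1(A) \leq \lambda_2(A) \leq \cdots \leq \lambda_n(A)$ and $\lambda_1(B) \leq \lambda_2(B) \leq \cdots \leq \lambda_n(B)$, so that $\mu_A$ and $\mu_B$ are the uniform empirical measures on the respective eigenvalue sets.

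Next, I would construct an explicit coupling that controls the $d_2$ cost: let $I$ be a random index uniformly distributed on $\{1, 2, \ldots, n\}$, and set $X = \lambda_I(A)$ and $Y = \lambda_I(B)$. By construction, $X \sim \mu_A$ and $Y \sim \mu_B$, so by the very definition of $d_2$ as the infimum over joint couplings,
$$d_2^2(\mu_A, \mu_B) \;\leq\; \mathbb{E}[(X - Y)^2] \;=\; \frac{1}{n} \sum_{i=1}^n \bigl(\lambda_i(A) - \lambda_i(B)\bigr)^2.$$

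Finally, the Hoffman-Wielandt inequality for real symmetric matrices asserts that
$$\sum_{i=1}^n \bigl(\lambda_i(A) - \lambda_i(B)\bigr)^2 \;\leq\; \Tr\bigl((A-B)^2\bigr),$$
which, combined with the previous display, yields the claim. The only real obstacle is that Hoffman-Wielandt is invoked as a black box; its proof proceeds by writing $A - B$ in a basis adapted to the spectral decompositions of both matrices and using that the squared entries of the orthogonal change-of-basis matrix form a doubly stochastic matrix, so Birkhoff's theorem reduces the bound to its value at a permutation, where one checks that the ordered pairing is optimal. Since this is a classical matrix analysis result, quoting it is standard and nothing further is required for the present lemma.
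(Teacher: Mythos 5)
Your proof is correct and is essentially the standard argument: the paper itself does not spell out a proof but cites Lemma 1.3.2 of \citep{bose2018patterned}, whose proof is exactly your combination of the ordered-eigenvalue coupling with the Hoffman--Wielandt inequality. Quoting Hoffman--Wielandt as a classical black box is entirely appropriate here, so nothing further is needed.
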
	
 Now we prove our main theorem.
\begin{proof}[\textbf{Proof of Theorem \ref{thm:maingk}}]\label{proof of thm2}
We break the proof into a few steps.\\

\noindent\textbf{Step 1:} We shall reduce the general case to the case  where all the entries of $Z_n$ have mean 0. To see this, consider the matrix $\widetilde{Z}_n$ whose entries are $(y_{ij}-\mathbb{E}y_{ij})$. Clearly the entries of $\widetilde{Z}_n$ have mean 0. Now
\begin{align}\label{meanzero-noiid}
n\ \mathbb{E}[(y_{ij}- \mathbb{E} y_{ij})^{2k}]= n\ \mathbb{E}[y_{ij}^{2k}] + n \displaystyle \sum_{t=0}^{2k-1} {{2k} \choose {t}}\mathbb{E}[y_{ij}^{t}]\ (\mathbb{E}y_{ij})^{2k-t}.  
\end{align}
The first term of the r.h.s. is equal to $g_{2k}(i/n,j/n)$ by \eqref{gkeven}. For the second term we argue as follows:
\begin{align*}
\text{For } t\neq {2k-1}, \ \ n \ \mathbb{E} [y_{ij}^{t}]\ (\mathbb{E}y_{ij})^{2k-t}& = (n^{\frac{1}{2k-t}}\ \mathbb{E}y_{ij})^{2k-t} \ \mathbb{E}[y_{ij}^{t}]\\
&\overset{n \rightarrow \infty}{\longrightarrow} 0, \ \ \text{ uniformly }  \ \text{ by condition } \eqref{gkodd}.  
\end{align*}
\begin{align*}
\text{For } t={2k-1}, \ \ n \ \mathbb{E} [y_{ij}^{2k-1}]\ \mathbb{E}y_{ij}& = (\sqrt{n} \ \mathbb{E} [y_{ij}^{2k-1}]) \ (\sqrt{n}\  \mathbb{E}y_{ij})\\
&\overset{n \rightarrow \infty}{\longrightarrow} 0, \ \ \text{ uniformly }  \ \text{ by condition } \eqref{gkodd}.
\end{align*}
Hence from \eqref{meanzero-noiid}, we see condition \eqref{gkeven} is true for the matrix $\widetilde{Z}_n$. Similarly we can show that \eqref{gkodd} is true for $\widetilde{Z}_n$. Hence, Assumption A holds for the matrix $\widetilde{Z}_n$.

Now observe that 
\begin{align*}
d_2^2(\mu_{Z_n},\mu_{\widetilde{Z_n}}) & \leq \frac{1}{n} \displaystyle \sum_{i,j} (\mathbb{E}y_{ij})^2\\ 
& \leq n \ (\underset{i,j}{\sup} \ \mathbb{E}y_{ij})^2.\\
&= (\underset{i,j}{\sup} \ \sqrt{n}\ \mathbb{E}y_{ij})^2 \  \overset{n \rightarrow \infty}{\longrightarrow} 0, \ \ \ \ \text{ by condition } \eqref{gkodd}. 
\end{align*}
Hence the LSD of $Z_n$ and $\widetilde{Z}_n$ are same. Hence we can assume the entries have mean zero.

Now we shall use Lemma \ref{lem:genmoment} to prove the first part of the theorem. We will verify the conditions (i), (ii) and (iii) of that lemma using Assumption A and a few other observations made earlier.\\  


\noindent\textbf{Step 2:} We verify the fourth moment condition, i.e.,  (ii) of Lemma \ref{lem:genmoment} for $Z_n$ in this step. 
In particular, we show that 
\begin{equation}\label{4thmoment-noniid}
 \frac{1}{n^{4}} \mathbb{E}[\Tr(Z_n^k) \ - \ \mathbb{E\
 }(\Tr(Z_n^k))]^4\ = \mathcal{O}(n^{-\frac{3}{2}}).
\end{equation}  
 Observe that
 \begin{align}\label{fourthmoment-noniid}
 \frac{1}{n^{4}} \mathbb{E}[\Tr(Z_n^k) \ - \ \mathbb{E}(\Tr(Z_n^k))]^4\ = \frac{1}{n^4} \displaystyle \sum_{\pi_1,\pi_2,\pi_3,\pi_4} \mathbb{E}[\displaystyle \Pi_{i=1}^4 (Y_{\pi_i}\ - \ \mathbb{E}Y_{\pi_i})].
 \end{align}
    If $(\pi_1,\pi_2,\pi_3,\pi_4)$ are not jointly-matched, then one of the circuits has a letter that does not appear elsewhere. 
	Hence by independence, and mean zero assumption, $\mathbb{E}[\displaystyle \Pi_{i=1}^4 (Y_{\pi_i}\ - \ \mathbb{E}Y_{\pi_i})]=0$.
  
	Now, if $(\pi_1,\pi_2,\pi_3,\pi_4)$ are not cross-matched, then one of the circuits say $\pi_j$ is only self-matched. Then, we have $\mathbb{E}[Y_{\pi_j}\ - \ \mathbb{E}Y_{\pi_j}]=0$. So, again we have  $\mathbb{E}[\displaystyle \Pi_{i=1}^4 (Y_{\pi_i}\ - \ \mathbb{E}Y_{\pi_i})]=0$.\\
Thus, we need to consider only circuits $(\pi_1,\pi_2,\pi_3,\pi_4)$ that are jointly- and cross-matched.
Suppose $\pi_i$ has $k_i$ new distinct letters for each $1\leq i \leq 4$ where $k_1+k_2+k_3+k_4=b$. Suppose the $j$th letter appear $s_j$ times across $\pi_1,\pi_2,\pi_3,\pi_4.$
Now the $s_i$'s might be odd or even. Without loss of generality assume that among the $s_i$'s there are $s_{i_1},s_{i_2},\ldots,s_{i_{b_1}}$ which are even and $s_{i_{b_1+1}},s_{i_{b_1+2}},\ldots,s_{i_{b_2}}$ which are odd where $b_1$ and $b_2$ are any two numbers adding up to $b$. Each term then can be written as 
\begin{align*}
 \frac{1}{n^4}  \displaystyle \sum_{b=1}^{2k} n^{-{b_1}} n^{-(b_2-\frac{1}{2})}
 \prod_{j=1}^{b_1} \  g_{s_{i_j},n}(\pi(i_j-1)/n,\pi(i_j)/n) \ \prod_{m=b_1+1}^{b_1+b_2} n^{\frac{b_2-(1-1/2)}{b_2}} \mathbb{E}[y_{\pi(i_{m}-1)\pi(i_{m})}^{s_{i_m}}]. 
 \end{align*}

We note that $g_{s_{i_j},n}   \rightarrow  g_{s_{i_j}}$  for all $1\leq j \leq b_1$. Therefore, the sequence $\|g_{s_{i_j},n}\|$ is bounded by  a constant $M_{j}$. Also as $\frac{b_2-(1-1/2)}{b_2}<1$, by \eqref{gkodd}, we have $n^{\frac{b_2-(1-1/2)}{b_2}} \mathbb{E}[y_{\pi(i_{m}-1)\pi(i_{m})}^{s_{i_m}}]  $ is bounded by $1$ for $n$ large when $b_1+1\leq m \leq b_1+b_2$. Let
$$M^{\prime} = \underset{b_1+b_2=b}{\max} \{M_{t},1:  1 \leq t \leq b_1 \}\ \mbox{ and }\ M_0^{\prime}=  \max \{{M^{\prime}}^b:  1 \leq b \leq 2k \}. 
$$
By Lemma \ref{lem:moment}, we have the total number of such circuits is of the order of $n^{b+2}$.  Therefore we have 
 \begin{align*}
 \frac{1}{n^{4}} \mathbb{E}[\Tr(Z_n^k) \ - \ \mathbb{E}(\Tr (Z_n^k))]^4 
& \leq M_0^{\prime} \displaystyle \sum_{b=1}^{2k} \frac{1}{n^{b+3\frac{1}{2}}} n^{b+2} \\
& = \mathcal{O}(n^{-\frac{3}{2}}).   
 \end{align*}
This completes  the  proof of \eqref{4thmoment-noniid}.
 By Lemma \ref{lem:genmoment} and \eqref{4thmoment-noniid}, it is enough to show  that for every $k\geq 1$, 
$\displaystyle \lim_{n\to\infty}\frac{1}{n}\mathbb{E}[\Tr(Z_n)^{k}]$ exists and is given by $\beta_k( \mu^{\prime})$ for each $k \geq 1$.\\

\noindent\textbf{Step 3:} In this step we verify the first moment condition, (i) of Lemma \ref{lem:genmoment} for $Z_n$. Now from \eqref{momentf1usual} and using the fact that $\mathbb{E}(y_{ij})=0$,  we have 	
\begin{align}\label{momentnoniid1}
\lim_{n \rightarrow \infty}\frac{1}{n}\mathbb{E}[\Tr(Z_n)^{k}]& =\displaystyle\lim_{n \rightarrow \infty}\frac{1}{n} \sum_{\pi:\ell(\pi)=k}\mathbb{E}[Y_{\pi}] \nonumber \\
 &= \displaystyle\lim_{n \rightarrow \infty} \displaystyle \sum_{b=1}^k \frac{1}{n} \sum_{\underset{\text{with b distinct letters}}{\omega \ \text{matched}}} \sum_{\pi \in \Pi(\boldsymbol{\omega})} \mathbb{E}(Y_{\pi})  
\end{align}

Let $\boldsymbol {\omega}$ be a word with $b$ distinct letters and let $\pi \in \Pi(\boldsymbol {\omega})$. Suppose the first appearance of the letters of $\boldsymbol {\omega}$ are at the $i_1,i_2, \ldots,i_b$ positions. Thus, the $j$th new letter appears at the $(\pi(i_j-1),\pi(i_j))-$th position for the first time. Recall the partition blocks $\mathcal{E}_{0},\mathcal{E}_{i_1},\mathcal{E}_{i_2},\ldots,\mathcal{E}_{i_b}$ as defined in Section \ref{Circuits and Words}.

From the proof of Lemma \ref{lem:wig1} and Lemma \ref{lem:wig2}, we know that all the $\mathcal{E}_{i_j} \ (0 \leq j \leq b)$ are distinct if and only if $\boldsymbol{\omega}\in SS_b(2k)$. 

So write \eqref{momentnoniid1} as 	
\begin{align}\label{momentnoniid2}
\lim_{n \rightarrow \infty}\frac{1}{n}\mathbb{E}[\Tr(Z_n)^{k}]
&=  \displaystyle\lim_{n \rightarrow \infty} \displaystyle \sum_{b=1}^k  \Big[\frac{1}{n} \sum_{\omega \in SS_b(k)} \sum_{\pi \in \Pi(\boldsymbol{\omega})}\ \mathbb{E}(Y_{\pi}) + \frac{1}{n} \sum_{\underset{\boldsymbol {\omega} \text{ with b letters}}{\omega \notin SS(k)}}  \sum_{\pi \in \Pi(\boldsymbol{\omega})}\ \mathbb{E}(Y_{\pi})\Big] .\nonumber \\
& = T_1+T_2.
\end{align}

Clearly $T_1$ is the term involving all the special symmetric partitions. This will be shown to contribute positively to the limit. The sum of contributions of all other partitions is $T_2$ and will be shown to go to 0 as $n \to \infty$.

For each $j \in \{1,2,\ldots , b\}$ denote $(\pi(i_j-1),\pi(i_j))$ as $(t_j,l_j)$. Clearly $t_1=\pi(0)$ and $l_1=\pi(1)$. It is easy to see that each distinct $(t_j,l_j)$ corresponds to each distinct letter in $\boldsymbol {\omega}$.
Let $S$ be the collection of representatives of each of the distinct $\mathcal{E}_{l_j}$'s and $\mathcal{E}_{t_1}$. Clearly, by \eqref{cardiality of word}, $|S|\leq (b+1)$.
%

Let $\boldsymbol {\omega}\in SS_b(k)$. Then by Lemma \ref{lem:wig2}, $|S|=b+1$. 
Suppose the $j$th new letter appear $s_j$ times in $\boldsymbol {\omega}$. Clearly all the $s_j$ are even. So the total contribution of this $\boldsymbol {\omega}$ to $T_1$ in \eqref{momentnoniid2} is as follows:
\begin{align}
\frac{1}{n} \displaystyle \sum_{\underset{1 \leq j \leq b}{(t_j,l_j)}} \ \displaystyle \prod_{j=1}^b \ \mathbb{E}\Big[y_{t_j l_j}^{s_j}\Big] 
= \frac{1}{n^{b+1}} \displaystyle \sum_{\underset{1 \leq j \leq b}{(t_j,l_j)}} \displaystyle \prod_{j=1}^b g_{s_j,n}\big(t_j/n,l_j/n\big).\label{finitesum-ss2k}
\end{align}
Next observe that if a sequence of bounded Riemann integrable functions, say $f_n$ converges uniformly to a function $f$ and a sequence of finite measures, say $\nu_n$ converges weakly to a measure $\nu$, then $$\int f_n \ d\nu_n \rightarrow \int f \ d\nu.$$

From this observation it is clear that for any sequence of bounded Riemann integrable function $f_n(x_1,x_2,\ldots,x_{b+1})$ on $[0,1]^{b+1}$, that converges uniformly to $f(x_1,x_2,\ldots,x_{b+1})$, as $n \rightarrow \infty$,
$$\frac{1}{n^{b+1}}\displaystyle \sum_{j_1,\ldots, j_{b+1}=1}^n f_n(j_1/n,j_2,n, \ldots, j_{b+1}/n) \rightarrow \int_{[0,1]^{b+1}}f(x_1,x_2,\ldots,x_{b+1}) \ dx_1dx_2\cdots dx_{b+1}.$$
Now as $|S|=b+1$ for $\boldsymbol {\omega} \in SS_b(2k)$, as $n \rightarrow \infty$, \eqref{finitesum-ss2k} becomes 
\begin{equation}\label{limit-ss2k}
\displaystyle \int_{[0,1]^{|S|}} \ \prod_{j=1}^b g_{s_j}\big(x_{t_j},x_{l_j}\big) \ dx_S,
\end{equation} 
where $dx_S= \prod_{i \in S} dx_i$ denotes the $|S|-$dimensional Lebesgue measure on $[0,1]^{|S|}$.

We split the investigation of $T_2$ into two cases. \\

\noindent\textbf{Case 1.}  Suppose $\boldsymbol {\omega}$ is an even word with $b$ distinct letters but is not special symmetric. Then the contribution to $T_2$ of \eqref{momentnoniid2} can be calculated as in \eqref{finitesum-ss2k}. But now note that  $|S|\leq b$. Hence in this case as $n \rightarrow \infty $, the contribution of this word $\boldsymbol {\omega}$ is 0.\\

\noindent\textbf{Case 2.}  $\boldsymbol {\omega} \notin E(2k)$. Suppose $\boldsymbol{\omega}$ contains $b_1$ distinct letters that appears even number of times and $b_2$ number of distinct letters that appears odd number of times and $b=b_1+b_2$. Without loss of generality, we can assume for each $\pi \in \Pi(\boldsymbol {\omega})$, without loss of generality $s_{j_p}$, $1\leq p \leq b_1$ to be even and $s_{j_q}$, $b_1+1\leq q \leq b_1+b_2$ to be odd. Hence the contribution of this $\boldsymbol {\omega}$ to $T_2$ in \eqref{momentnoniid2} is as follows:
\begin{align}\label{finitesum-nonss2k}
\frac{1}{n}   n^{-{b_1}} n^{-(b_2-\frac{1}{2})}
\sum_{\underset{1 \leq j \leq b}{(t_j,l_j)}} \prod_{p=1}^{b_1} \  g_{s_{j_p},n}\big(t_{j_p}/n,l_{j_p}/n\big) \ \prod_{m=b_1+1}^{b_1+b_2} n^{\frac{b_2-(1-1/2)}{b_2}} \mathbb{E}\Big[y_{t_{i_m}l_{j_m}}^{s_{j_m}}\Big] \nonumber\\
= \frac{1}{n^{b_1+b_2+\frac{1}{2}}}  \displaystyle   
\sum_{\underset{1 \leq j \leq b}{(t_j,l_j)}} \prod_{p=1}^{b_1} \  g_{s_{j_p},n}\big(t_{j_p}/n,l_{j_p}/n\big) \ \prod_{m=b_1+1}^{b_1+b_2} n^{\frac{b_2-(1-1/2)}{b_2}} \mathbb{E}\Big[y_{t_{i_m}l_{j_m}}^{s_{j_m}}\Big].
\end{align}
For $n$ large, $n^{\frac{b_2-(1-1/2)}{b_2}} \mathbb{E}[y_{t_{i_m}l_{j_m}}^{s_{j_m}}]<1$ for any $b_1+1\leq m\leq b_1+b_2$. Now as $|S|\leq b$,  \eqref{finitesum-nonss2k} contributes 0 as $n \rightarrow \infty$.

For any partition $\sigma\in SS_b(2k)$ let $\{V_1, \ldots ,V_b\}$ be its partition blocks. Then from \eqref{momentnoniid2} and \eqref{limit-ss2k}, we have 
\begin{align}\label{wignernoniid}
\lim_{n \rightarrow \infty}\frac{1}{n}\mathbb{E}[\Tr(Z_n)^{2k}]& =    \sum_{b=1}^{k}\displaystyle \sum_{\sigma \in SS_b(2k)} \displaystyle \int_{[0,1]^{|S|}} \ \prod_{j=1}^b g_{|V_j|}(x_{t_j},x_{l_j}) \ dx_S
 \end{align}
where $dx_S= \prod_{i\in S} dx_i$ denotes the $|S|-$dimensional Lebesgue measure on $[0,1]^{|S|}$.

We also note that $\displaystyle\lim_{n \rightarrow \infty}\frac{1}{n}\mathbb{E}[\Tr(Z_n)^{2k+1}]=0$ for any $k\geq 0$.
This completes the proof of the first moment condition.\\

\noindent \textbf{Step 4:} We prove the uniqueness of the measure in this step. We have obtained 
\begin{align*}
\gamma_{2k}= &\lim_{n \rightarrow \infty}\frac{1}{n}\mathbb{E}[\Tr(W_n)^{2k}]
 \leq \displaystyle \sum_{\sigma \in SS(2k)} M_{\sigma}
 \leq \displaystyle \sum_{\sigma \in \mathcal{P}(2k)} M_{\sigma} 
 =\alpha_{2k}.
\end{align*}
As $\{\alpha_{2k}\}$ satisfies Carleman's condition, $\{\gamma_{2k}\}$ also does so. Now using Lemma \ref{lem:genmoment}, we see that there exists a measure $ \mu^{\prime} $ with moment sequence $\{\gamma_{2k}\}$ such that $\mu_{Z_n}$ converges weakly almost surely to $ \mu^{\prime} $. 

This completes the proof of part (a).\\

\noindent\textbf{Step 5:} We prove part (b) of the theorem in this final step. To see this, observe that 
 \begin{align}\label{d2inequality}
 d_2^2(\mu_{W_n},\mu_{Z_n}) \leq \frac{1}{n} \sum_{i,j} \ x_{ij}^2[\boldsymbol {1}_{[|x_{ij}| > t_n]}].
 \end{align}
Now if this $\{t_n\}$ also satisfies condition \eqref{truncation}, then using \eqref{d2inequality} and (a) we can say that the ESD of $W_n$ converges to $ \mu^{\prime}$ almost surely (respectively in probability). 

This proves part (b).

To complete the proof, it remains to verify the condition for the limit to be semi-circular. We define any  word of length $2k$ to be \textit{Catalan} if it is pair-matched and at the same time non-crossing. This corresponds to the partitions in $SS_k(2k)$.

First note that if $g_{2k}=0$ for $k>1$, then by \eqref{wignernoniid} it is only the Catalan words that contribute to the sum. Let $\omega$ be a Catalan word with the last new letter appearing at the $(\pi(i_k-1),\pi(i_k))-$th position. Then the generating vertex for that letter is $\pi(i_k)$. Therefore, going by the notation mentioned in the beginning of this proof and as well as those in the proof of Lemma \ref{lem:wig2}, $i_k$ does not appear in any of the pairs $(t_j,l_j)$ for $j<k$. So, the contribution  $p(\omega)$ of $\omega$ to $\gamma_{2k}$ is as follows:
\begin{align}\label{Catalan}
p(\omega)= \int_{[0,1]^{k+1}} g_2(x_{t_1},x_{l_1})\cdots g_2(x_{t_k},x_{l_k}) \ dx_S. 
\end{align}
Since $x_{l_k}$ does not appear in any of the other factors of the integrand, we can integrate w.r.t $x_{l_k}$ to get 
\begin{align*}
p(\omega)&=\int_{[0,1]^{k}} g_2(x_{t_1},x_{l_1})\cdots \int_{[0,1]}g_2(x_{t_k},x_{l_k}) \ dx_{i_k} \ dx_{S \setminus \{x_{l_k}\}}\\
&= c \ \int_{[0,1]^{k}} g_2(x_{t_1},x_{l_1})\cdots g_2(x_{t_{k-1}},x_{l_{k-1}}) \ dx_{S \setminus \{x_{l_k}\}}, \ \ \  \text{as } \int_{[0,1]} g_2(x,y)\ dy \ = c.
\end{align*} 
Now dropping the last new letter from the word $\omega$, the reduced word $\omega'$ is also a Catalan word. Hence following the same argument we have that \eqref{Catalan} becomes $c^k$ which is independent of $\omega$. Hence for any Catalan word the contribution to $\gamma_{2k}$ is same. Therefore, the limit is semi-circular.

Now suppose that the limit is semi-circular and without loss assume that it has variance 1. Note that then the fourth moment equals $2$. Define 
$$f(x)=\int_{0}^1 g_2(x,y)dy.$$
Then $$\gamma_2=\int_0^1 f(x)dx=1.$$
Then from equation (\ref{limit-ss2k}),  the fourth moment is given by 
\begin{eqnarray*}2&=&\gamma_4\\
 &=&\int_0^1 g_4(x,y)dxdy+ 2\int_0^1 g_2(x_1, x_2)g_2(x_1, x_3)dx_1dx_2dx_3\\
&=& \int_0^1  g_4(x,y)dxdy+2\int_0^1 f^2(x)dx\\
& \geq & \int_0^1  g_4(x,y)dxdy+2\big(\int_0^1 f(x)dx\big)^2\\
& \geq & \int_0^1  g_4(x,y)dxdy+2.
\end{eqnarray*}
Clearly then from the above $f(x)=1$ for all $x$ and $g_4\equiv0$. 

Now note that from earlier calcuations, since $f\equiv 1$, the contribution of each Catalan word (of any order) equals 1. Since this contribution already gives the moments of the semi-circular, the contribution from other words vanish.  As a consequence inductively $g_{2k}=0$ for all $k >1$. 
The proof of the theorem is now complete.
\end{proof}
\begin{remark}
The upper bound used to prove Carleman's condition in Step 4 of the above proof can be strengthened further because $SS(2k)\subset S(2k)\subset E(2k)$. 
\end{remark}
\begin{remark}
When we look at the case where $g_{2k}=0$ for all $k>1$, we know from  \eqref{wignernoniid} that only Catalan words contribute to the sum. But without any further condition on $g_2$, the contribution of different Catalan words may be unequal. For example, let $\omega_1=aabbcc$ and $\omega_2=abccba$ be two Catalan words of length 6. By \eqref{limit-ss2k}, the contribution for  $\omega_1$ is  $$p(\omega_1)=\int_{[0,1]^4} g_2(x_1,x_2)g_2(x_1,x_3)g_2(x_1,x_4) \ \prod_{i=1}^4 dx_i$$ while the contribution for  $\omega_2$ is  $$ p(\omega_2)=\int_{[0,1]^4} g_2(x_1,x_2)g_2(x_2,x_3)g_2(x_3,x_4) \ \prod_{i=1}^4 dx_i$$ Obviously $p(\omega_1)\neq p(\omega_2)$ in general.
 Also, it can be verified that under the assumption that $g_{2k}=0$ for all $k>1$, the condition $\int_{[0,1]} g_2(x,y)\ dy $ is constant is necessary for the limit to be semi-circular.
\end{remark}

\begin{proof}[\textbf{Proof of Corollary \ref{thm:main}.}]
We know that  $\{x_{ij}; 1 \leq i \leq j\leq n\}$ are i.i.d. for every fixed $n$. Then $\{y_{ij}; 1 \leq i \leq j\leq n\}$ are also i.i.d. for every fixed $n$. From condition (i) of the corollary, clearly \eqref{gkodd} and \eqref{gkeven} are satisfied with $g_{2k,n}\equiv C_{2k}$ on $[0,1]$. Therefore, $g_{2k}\equiv C_{2k}$ on $[0,1]$ and $W_n$ satisfies condition (ii) of Assumption A. Having observed this, condition (ii) of the corollary implies condition (iii) of Assumption A. Thus, from Theorem \ref{thm:maingk}, the ESD of $Z_n$ converges to a probability measure $\mu$. 

From \eqref{wignernoniid}, we see that only the special symmetric words contribute in the limiting moment sequence. Also as $g_{2k}\equiv C_{2k}$ on $[0,1]$, the moments of $\mu$ are given by 
$$\beta_k(\mu) =\left\{ \begin{array}{cc}
\displaystyle\sum_{\sigma \in SS(k)} C_{\sigma} & \text{if} \ k  \ \text{is even},\\
0 &  \text{if}\  k \ \text{is odd}.
\end{array}
\right.
$$

Now suppose further that $\frac{1}{n}\displaystyle \sum_{i,j} \ x_{ij}^2[\boldsymbol {1}_{[|x_{ij}| > t_n]}]$ converges to 0 almost surely (respectively in probability). Then by Theorem \ref{thm:maingk}, the ESD of $W_n$ converges to $\mu$ almost surely (respectively in probability).
\end{proof}
 \begin{remark} 
 Suppose $F$ is the distribution function of a symmetric infinitely divisible distribution with all moments finite and cumulant seqeunce $\{D_k\}_{k \geq 1}$. Then due to infinite divisibility, for every $n$, we can find i.i.d. random variables $\{y_{i,n}: 1\leq i \leq n1\}$ with distribution $H_n$, such that $\sum_{i=1}^{n}y_{i,n}$ converges in distribution to $F$ (see page 766, Characterization 1 in \citep{bose2002contemporary}). Moreover, it can be easily verified that 
the above weak convergence holds if
\begin{equation}\label{eq:momentconv}n \mathbb{E}[y_{i,n}^k]\rightarrow D_k.
\end{equation}
  Now let $\{x_{ij,n}: \ i\leq j\}$ be i.i.d. with distribution $H_n$, for every fixed $n$ such that (\ref{eq:momentconv} holds. Then from the above discussion it is clear that these variables satisfy \eqref{ckeven} and  \eqref{ckodd} with $t_n=\infty$ and $C_{2k}=D_{2k}$. 
  Now if moments of $F$ satisfy Carleman's condition, then the variables $\{x_{ij,n}: \ i\leq j\}$ satisfy the assumptions in Corollary \ref{thm:main}. Thus the ESD of $W_n$ with entries $\{x_{ij,n}: \ i\leq j\}$ as described here converges almost surely to the symmetric probability distribution $F$ which is identified by $\{D_{2k}\}_{k \geq 1}$.
This gives a class of non-trivial matrices whose almost sure LSD exists by an application of Corollary \ref{thm:main}.
\end{remark}
\begin{remark}\label{momentgenfn}
If condition (ii) of Corollary \ref{thm:main} is replaced by the condition that the sequence $\{0, C_2, 0,C_4, 0,\ldots\}$ is  the cumulant sequence  of a probability distribution $G$ whose moment generating function has a positive radius of convergence around 0, then the result still holds. To see this, suppose $X \sim G$. Suppose $Y$ is a random variable whose moments are as as follows:
\begin{align*}
\mathbb{E}(Y^{2k-1}) = 0 \ \mbox{ and }\ 
 \mathbb{E}(Y^{2k})  = \displaystyle \sum_{\sigma \in SS(2k)} C_{\sigma} \ \ \ \text{ for each } k \geq 1.
\end{align*}
Then observe that
\begin{align}\label{Y}
\mathbb{E}(Y^{2k})= \displaystyle \sum_{\sigma \in SS(2k)} C_{\sigma}
\leq \displaystyle \sum_{\sigma \in E(2k)} C_{\sigma}=  \mathbb{E}(X^{2k})
\end{align}
and
\begin{align*}
 0 \leq M_Y(t)  = \displaystyle \sum_{k=0}^{\infty} \frac{t^k}{k!} \mathbb{E}(Y^k)
  = \displaystyle \sum_{k=0}^{\infty} \frac{t^{2k}}{(2k)!} \mathbb{E}(Y^{2k}) \leq \displaystyle \sum_{k=0}^{\infty} \frac{t^{2k}}{(2k)!}  \mathbb{E}(X^{2k})= M_X(t).
\end{align*}
So if $M_X(t)$ has a positive radius of convergence around 0, then 
$M_Y(t)$ has a positive radius of convergence around 0. This implies that the distribution of $Y$ is uniquely determined by its moments, and everything else follows as in the proof of Theorem \ref{thm:maingk}.
\end{remark}
 \section{Discussion}\label{discussion}

In this section we connect Theorem \ref{thm:maingk} and Corollary \ref{thm:main} to Results \ref{result:semi-circle}--\ref{result:Zhu}. We also prove some extensions and new results. In Sections \ref{i.i.d}, we show how Result \ref{result:semi-circle} follows from Corollary \ref{thm:main} and then provide an extension in Result \ref{res:variance} to matrices that have independent entries but with unequal variances. 
In Section \ref{heavy-tailed}, we deduce Result \ref{Heavy-tailed entries, BenArous} by using Theorem \ref{thm:maingk} and a truncation argument. In Section \ref{sparse}, we first show how Result \ref{result:sparse} follows from Corollary \ref{thm:main}. Result \ref{imhomoge}
is a new result on the existence of the almost sure LSD of the adjacency matrix of a non-homogeneous  Erd\H{o}s-R\'{e}nyi graph in the sparse regime. In Section \ref{triangular iid}, we deduce Result \ref{result:Zakharevich} from Corollary \ref{thm:main}. We also give an alternative description of the special symmetric words in terms of coloured rooted trees in Lemma \ref{lem:wig3},  connecting the limiting moments in Corollary \ref{thm:main} to those given in \citep{zakharevich2006generalization}. In Section \ref{graphons}, we show how we can generalize the graphon approach and thereby deduce Result \ref{result:Zhu}. In Section \ref{variance prf}, we state and prove Result \ref{res:variance profile} for matrices with a variance profile. In Section \ref{band and block} we state and prove Results \ref{res:band1}, \ref{res:band2}, \ref{res:block} to generalize results given in \citep{casati1993wigner} and \citep{zhu2020graphon}. 
We end with a few simulations in Section \ref{simulations} to show the varying nature of the LSD.

\subsection{Result \ref{result:semi-circle} and a non-iid extension}\label{i.i.d}
 We first consider $W_n$ with i.i.d. entries and then with only independent entries.\\
  
\noindent \textbf{(a) I.I.D. case}: Suppose $W_n=(x_{ij}/\sqrt{n})$, where $\{x_{ij}\}$ are i.i.d. with distribution  $F$,  mean zero and variance one.  

Let $t_n=n^{-1/3}$. Then $ t_n\sqrt{n} \rightarrow \infty$ as $n \rightarrow \infty$ and
\begin{align*}
\displaystyle \lim_{n \rightarrow \infty} n \ \mathbb{E}\bigg [\bigg(\frac{x_{ij}}{\sqrt{n}}\bigg)^2 \boldsymbol{1}_{[|x_{ij}/\sqrt{n}| \leq t_n]}\bigg] \ =  \ 1=  C_2.
\end{align*}
Also, for any $k>2$,
\begin{align*}
n\ \mathbb{E}\bigg [\bigg(\frac{x_{ij}}{\sqrt{n}}\bigg)^{k} \boldsymbol{1}_{[|x_{ij}/\sqrt{n}| \leq t_n]}\bigg] \ &
= n\ \mathbb{E}\big [{(x_{11}/\sqrt{n})}^{(k-2)} \ {(x_{11}/\sqrt{n})}^{2} \boldsymbol{1}_{[|x_{11}| \leq t_n  \sqrt{n}]}\big]\\
& \leq n \frac{t_n^{(k-2)}}{n} \mathbb{E} \big [{x_{11}}^{2} \boldsymbol{1}_{[|x_{11}| \leq t_n  \sqrt{n}]}\big]\\
& \leq t_n^{(k-2)}\\
& =  (n^{- \frac{1}{3}})^{k-2} \ \rightarrow \ 0 \ \ \text{ as } n \rightarrow \infty.
\end{align*}
Now for any $ t > 0$, 
\begin{align*}
\frac{1}{n}\displaystyle \sum_{i,j} \ \big(x_{ij}/\sqrt{n}\big)^2[\boldsymbol {1}_{[|x_{ij}/\sqrt{n}| > t_n]}]= & \frac{1}{n^2}\displaystyle \sum_{i,j} \ x_{ij}^2[\boldsymbol {1}_{[|x_{ij}| > t_n \sqrt{n}]}]\\
& \leq \frac{1}{n^2}\displaystyle \sum_{i,j} \ x_{ij}^2[\boldsymbol {1}_{[|x_{ij}| > t]}] \ \ \text{for all large} \ n,\\
& \overset{a.s.}{\longrightarrow} \ \mathbb{E}\big[ x_{11}^2[\boldsymbol {1}_{[|x_{11}| > t]}]\big] \ \ \text{for all large} \ \ n.  
\end{align*}
As $\mathbb{E}[x_{11}^2]=1$, taking $t$ to infinity, the above limit is 0 almost surely. 

So without loss we may assume that all moments of $F$ are finite.
Let $G_n$ be the distribution of $X/ \sqrt{n}$ for each $n$ where $X\sim F$.
So the $k$th moment of $G_n$ equals $\mu_n(k)= \frac{\beta_k(F)}{n^{k/2}}$ for $k\geq 1$. Thus $n\mu_n(2)=\beta_2(F)=1$  for all $n$. Also, for $k>2$, $n\mu_n(k)=\frac{\beta_k(F)}{n^{k/2-1}}$. As $F$ has all moments finite, we have  $C_2=1$ and $C_{2k}=0$ for all $k>1$.

Hence $W_n=(x_{ij}/\sqrt{n})_{1 \leq i,j \leq n},$
where $x_{ij}=x_{ji}, \ 1 \leq i<j \leq N$ satisfy the assumptions of Corollary \ref{thm:main}. Therefore the ESD of $W_n$ converges almost surely to $\mu$ whose moments are given by
\begin{align*}
\beta_{2k}(\mu) =  \displaystyle \sum_{\sigma \in SS(2k)} C_{\sigma}.
\end{align*}
But for any $\sigma=\{V_1,V_2,\ldots,V_b\}\in SS(2k)$, $C_{\sigma}=0$ if there is any $j$ such that $|V_j|\neq 2$. Therefore, we must have $b=k$ and $|V_i|=2$ for all $1 \leq i \leq k$. Hence $\sigma$ is a pair-partition and in particular, $\sigma$ is a non-crossing pair-partition of $[2k]$ as $\sigma$ belongs to $SS(2k)$ also.
Thus 
\begin{align}\label{momentf1finite}
\beta_{2k}(\mu)  = \displaystyle \sum_{\sigma \in SS(2k)} C_{\sigma}  = \displaystyle \sum_{\sigma \in NC_2(2k)} 1
 = \frac{1}{k+1} {{2k} \choose k}. 
\end{align}
Therefore $(\beta_{2k})_{k \geq 1}$ are the Catalan numbers, and hence the LSD of $W_n=(x_{ij})/\sqrt{n}$ is the  semi-circular distribution. Thus we get Result \ref{result:semi-circle} as a special case of Theorem \ref{thm:maingk}.\\

\noindent \textbf{(b) Independent entries}: We now drop the identically distributed condition and also assume unequal variances for the entries. This provides an extension of the result in  \citep{bai2010spectral} for the equal variance case. 
\begin{result}\label{res:variance}
   Consider the Wigner matrix $W_n$ with entries $\{\frac{x_{ij}}{\sqrt{n}} : 1\leq i \leq j \leq n\}$ that are independent and satisfy the following conditions:
\begin{enumerate}
\item[(i)] $\mathbb{E}x_{ij}=0$ and $\mathbb{E}[x_{ij}^2]= {\sigma_{ij}}^2$.
\item[(ii)] $\sigma_{ij}$ satisfy the following:
\begin{align}\label{variance}
\underset{i}{\sup}\ \displaystyle \bigg|\frac{1}{n}\sum_{j=1}^n {\sigma^2}_{ij} -1\bigg| \rightarrow 0 \ \ \  \text{ as } n \rightarrow \infty.
\end{align}
\item[(iii)] $\displaystyle \lim_{n \rightarrow \infty} \frac{1}{n^2} \displaystyle \sum_{i,j} \mathbb{E}\big[x_{ij}^2]\boldsymbol{1}_{[|x_{ij}|>\eta \sqrt{n}]}\big]=0$ for every $\eta>0$.
\end{enumerate}
Then the almost sure LSD of $W_n$ is the semi-circular distribution. 
\end{result}

\begin{proof}
Note that for every $\eta>0$,
\begin{equation}
\sup_{i} \bigg| \frac{1}{n}\sum_{j}\mathbb{E}\bigg[x_{ij}\boldsymbol{1}_{[|x_{ij}|\leq \eta \sqrt{n}]} -\mathbb{E}\big[x_{ij}\boldsymbol{1}_{[|x_{ij}|\leq \eta \sqrt{n}]} \big] \bigg]^2 \ - \ 1 \bigg| \longrightarrow 0 \ \ \ \text{ using (ii) and (iii)}.
\end{equation}

Again using (iii) and the truncation step (Step 1) in the proof of Theorem 2.9 in \citep{bai2010spectral}, we can assume without loss of generality the entries of $W_n$ to be $x_{ij}\boldsymbol{1}_{[|x_{ij}|\leq \eta_n \sqrt{n}]} -\mathbb{E}\big[x_{ij}\boldsymbol{1}_{[|x_{ij}|\leq \eta_n \sqrt{n}]}\big] $ for some decreasing sequence $\eta_n \downarrow 0$. 
 

First observe that for the word $aa$, the contribution to the moment sequence is 1. This is because 
\begin{align*}
\big|\frac{1}{n} \underset{i_0,i_1}{\sum}\  \frac{1}{n} {\sigma^2}_{i_0,i_1} -1 \big|\ \leq \underset{i_0}{\sup}\ \displaystyle |\frac{1}{n}\sum_{i_1=1}^n {\sigma^2}_{i_0i_1} - 1| \rightarrow 0 \ \ \  \text{ as } n \rightarrow \infty. 
\end{align*} 
We shall prove by induction on the length of the word that each Catalan word contributes 1 to the limit. Towards that suppose all Catalan words of length $2(k-1)$ contribute 1 to the moment.

Now suppose  $\omega$ is a Catalan word of length $2k$. Recall the notation used in the proof of Theorem \ref{thm:maingk}. Using those notation, clearly $|S|=k+1$ and each distinct letter corresponds to the pair $(t_j,i_j), \ (1 \leq j \leq k)$. As $\omega$ is Catalan, $i_k$ appears only once in the sum. Therefore for this word $\omega$, 
\begin{align}\label{independent}
\frac{1}{n^{k+1}} \underset{i_0,i_1,\ldots,i_k}{\sum}\ \prod_{j=1}^k {\sigma^2}_{t_j,i_j} 
= \frac{1}{n^k} \underset{i_0,i_1,\ldots,i_{k-1}}{\sum}\ \prod_{j=1}^{k-1} {\sigma^2}_{t_j,i_j} \ \big(\frac{1}{n}\sum_{i_k} {\sigma^2}_{t_k,i_k}-1\big)\ + \frac{1}{n^k} \underset{i_0,i_1,\ldots,i_{k-1}}{\sum}\ \prod_{j=1}^{k-1} {\sigma^2}_{t_j,i_j}.
\end{align}  
The second term of the r.h.s. of the above equation goes to 1 as $n$ goes to $\infty$ by the induction hypothesis. Observe that the factor $$\big(\frac{1}{n}\sum_{i_k} {\sigma^2}_{t_k,i_k}-1\big) \rightarrow 0\ \ \ \text{ by } \eqref{variance}.$$ Also note that as $\omega$ is Catalan we can write the first term as 
\begin{align*}
\frac{1}{n}\sum_{i_0} \prod_{j=1}^{k-1}\big( \frac{1}{n}\sum_{i_j}{\sigma^2}_{t_j,i_j} \big) \big(\frac{1}{n}\sum_{i_k} {\sigma^2}_{t_k,i_k}-1\big).
\end{align*}
Moreover by \eqref{variance}  $\big(\frac{1}{n}\sum_{i_{j}} {\sigma^2}_{t_{j},i_{j}}\big)$ is bounded for each $1 \leq j \leq k-1$. Thus the first term of the r.h.s. of  \eqref{independent} goes to 0 as $n \rightarrow \infty$. Therefore every Catalan word contributes 1 in the limit.  

Now, suppose $\omega$ is a non-Catalan word with $b$ distinct letters which appear $s_1,s_2,\ldots,s_b$ times. So, $|S|\leq b$. Then the contribution for this word is as follows: 
\begin{eqnarray*}
\frac{1}{n^{k+1}} \underset{\text{distinct elements of}\  S}{\sum} \prod_{j=1}^b \mathbb{E}[x^{s_j}_{t_j,i_j}]
& \leq& \frac{(\eta_n \sqrt{n})^{2k-2b}}{n^{k+1}} \underset{\text{distinct elements of} \ S}{\sum} \prod_{j=1}^b {\sigma^2}_{t_j,i_j}\\
& =& \frac{{\eta_n}^{2k-2b}}{n^{b+1}}\underset{\text{distinct elements of} \ S}{\sum} \prod_{j=1}^b {\sigma^2}_{t_j,i_j}\\
& \leq &\frac{{\eta_n}^{2k-2b+2}}{n^{b}}\underset{\text{distinct elements of} \ S\setminus \{i_m\}}{\sum} \underset{j\neq m}{\prod_{j=1}^b} {\sigma^2}_{t_j,i_j} \ \  (\text{as}\  \ |S|\leq b) \\
&=& \mathcal{O}\big({\eta_n}^{2k-2b+2}\big) \ \ \text{since} \ \   \big(\frac{1}{n}\sum_{i_{j}} {\sigma^2}_{t_{j},i_{j}}\big) \ \ \text{is bounded}\\
&\to & 0 \ \ \text{since}\ \ b\leq k\ \ \text{and}\ \ \eta_n \downarrow 0.
\end{eqnarray*}
Thus such words do not contribute in the limit. Therefore, the moments of the limiting EESD are those of the semi-circular distribution.

Again, using the above technique along with Lemma \ref{lem:moment}, condition (ii) of Lemma \ref{lem:genmoment} can be verified. Hence the ESD of $W_n$ converges weakly to the semi-circular distribution almost surely.\\
\end{proof}



\subsection{Result \ref{Heavy-tailed entries, BenArous}: heavy tailed entries}\label{heavy-tailed}
 Suppose $F$ is an $\alpha$-stable distribution ($0<\alpha <2$), i.e., there exists a slowly varying function $L$ such that $$\mathbb{P}[|x_{ij}|\geq u]= \frac{L(u)}{u^{\alpha}}.$$ Now we consider $W_n=(x_{ij}/a_n)$ where $\{x_{ij}\}$ are i.i.d. with distribution $F$ and $a_n= \inf\{u: \mathbb{P}[|x_{ij}|\geq u]\leq \frac{1}{n}\}$. \citep{arous2008spectrum}  proved the existence of LSD of $W_n$  using the method of Stieltjes transform. We show how our theorem  may be used to  give an alternative proof. 
 
For a fixed constant say, $B$, consider the matrix $W_n^B$ whose entries are $\frac{x_{ij}}{a_n}\boldsymbol {1}_{[|x_{ij}|\leq B a_n]}$. Then we have the following: 
 \begin{enumerate}
 \item[(a)]  For every fixed $B\in \mathbb{N}$, $W_n^B$ satisfies assumptions of Corollary \ref{thm:main}. Hence there exists a probability measure $\mu_B$ which is the  weak limit of the ESD of $W_n^B$ almost sure. That is, for each fixed $B$ and $\epsilon>0$, for $n$ large enough,
 \begin{align}\label{finitelevel}
 d_1(\mathbb{E}[\mu_{B}],\mathbb{E}[\mu_{W_n^B}]) \leq \epsilon/3, \ \ \ \ \text{ almost surely}
 \end{align}
 where  $d_1$ (see Lemma 2 in \citep{arous2008spectrum}) is an appropriate metric giving weak convergence of distributions.
 \item[(b)] By Theorem 2.2 in \citep{arous2008spectrum}, for every $\epsilon>0$, there exists $B(\epsilon)$ and $\delta(\epsilon,B)>0$ such that for $n$ large enough 
 \begin{align}\label{closeness}
 \mathbb{P}\big[d_1(\mu_{W_n},\mu_{W_n^B})>\epsilon\big] \leq \exp(-\delta(\epsilon,B)n).
 \end{align}
 Hence for every $\epsilon>0$, there exists $B(\epsilon)$ and $\delta(\epsilon,B)>0$ such that for $n$ large enough
 \begin{align}\label{closeness-exp}
 d_1(\mathbb{E}(\mu_{W_n}),\mathbb{E}(\mu_{W_n^B})) \leq \exp(-\delta(\epsilon,B)n).
 \end{align}
 Hence using \eqref{finitelevel} and \eqref{closeness-exp}, we have 
 that $\mu_B$ is $d_1-$ Cauchy. As the space of all distributions is complete with respect to this metric, $\mu_B$ converges to  a probability measure $\tilde{\mu}$, say.
 \item[(c)] Next observe that 
 \begin{align*}
 \mathbb{P}\big[d_1(\mu_{W_n},\tilde{\mu})>\epsilon\big] &
 \leq \mathbb{P}\big[d_1(\mu_{W_n},\mu_{W_n^B})>\epsilon/3\big]+\mathbb{P}\big[d_1(\mu_{W_n^B},\mu_{B})>\epsilon/3\big]+\mathbb{P}\big[d_1(\tilde{\mu},\mu_{B})>\epsilon/3\big]
 \end{align*}
 Choosing  $B$ large enough and then taking $n$ to $\infty$, we have that the r.h.s. of the above inequality is very small.
\end{enumerate}  
Hence we conclude that $\mu_{W_n}$ converges weakly to $\tilde{\mu}$ in probability. This yields Result \ref{Heavy-tailed entries, BenArous}.

\subsection{Result \ref{result:sparse} and a non-homogenous extension}\label{sparse}

\noindent \textbf{(a) Homogeneous case}: Let $G_n= Bin(1,{p}_n)$ where $n{p}_n \rightarrow \lambda>0$ as $n\to\infty$. Consider the  Wigner matrix   
\begin{align*}
W_n=(x_{ij})_{1\leq i, j \leq n},
\end{align*}
where $\{x_{ij}; \ 1 \leq i\leq j \leq n\}$ are i.i.d. $G_n$. 
It is easy to see that the assumptions of Corollary \ref{thm:main} holds with $t_n=\infty$ and $C_{2k}= \lambda $ for all $k$.  
Hence  by Corollary \ref{thm:main}, the ESD  of $W_n$ converges almost surely to $\mu$ whose moments are given by \begin{align}\label{momentf1sparse}
\beta_{2k}(\mu)= \displaystyle \sum_{\sigma \in SS(2k)} C_{\sigma}  = \displaystyle \sum_{\sigma \in SS(2k)} \lambda^{|\sigma|}=\sum_{b=1}^k |SS_b(2k)|\lambda^b.
\end{align}

Bauer and Golinelli \citep{bauer2001random} considered the matrix $M_n$ with entries as in $W_n$ but  the diagonal entries being 0.
First note that the LSD of $M_n$ and $W_n$ are identical. To see this observe,
\begin{align*}
d_2^2(\mu_{W_n},\mu_{M_n}) & \leq \frac{1}{n} \displaystyle \sum_{i=1}^n x_{ii}^2.
\end{align*}
 As $x_{ii}\sim  Bin(1,p_n)$ for each fixed $n$ and independent across $i$, $X_n=\sum_{i=1}^n x_{ii}^2$ follows $Bin(n,p_n)$ where $n p_n\rightarrow \lambda$. This implies that $\frac{X_n}{n} \rightarrow 0$ almost surely. Hence the r.h.s. of the above inequality goes to 0 almost surely as $n\rightarrow \infty$. 

Recall the expression \eqref{Bauer-exprss} of moments given earlier for the limit of the EESD from \citep{bauer2001random}.
 Now since the limit of the EESD and almost sure limit of the ESD must be equal, the two expressions \eqref{momentf1sparse} and \eqref{Bauer-exprss}, must be identical. Since both expressions are polynomials in $\lambda$, we must have 
 $$|SS_b(2k)|= I_{k,b}\ \ \text{for all}\ \ b.$$
 this yields Result \ref{result:sparse}.
 Later in Lemma \ref{lem:wig3}, we shall relate $SS_b(2k)$ to certain trees which are differently constructed compared to \citep{bauer2001random}.\\

\noindent \textbf{(b) Non-homogeneous case}: Now consider the non-homogeneous Erd\H{o}s-R\'{e}nyi graph in the sparse regime.
\begin{result}\label{imhomoge}
Suppose there is a sequence of bounded Riemann integrable symmetric functions $p_n:[0,1]^2\longrightarrow [0,1]$ such that $n p_n$ converges uniformly to a function $p$. 
Consider the  Wigner matrix   
\begin{align*}
W_n=(x_{ij})_{1\leq i, j \leq n},
\end{align*}
where $\{x_{ij}, \ 1 \leq i\leq j \leq n\}$ are
 such that $n \mathbb{E}[x_{ij}^{2k}]= p_n(i/n,j/n)$.
 Then the ESD of $W_n$ converges waekly almost surely to a symmetric probability measure $\mu^{\prime}$ whose even moments are given by 
 \begin{equation}\label{eq:inhomosparse}\beta_{2k}(\mu^{\prime})= \sum_{\pi \in SS(2k)} \lambda^{|\pi|}= \sum_{b=1}^k |SS_b(2k)|\lambda^b. 
\end{equation}  
 In particular, if $\int_0^1p(x,y)dx=\lambda$ for all $y$, then $\mu^{\prime}=\mu_{bg}$. \end{result}
 
 \begin{proof}
 It is easy to see that $W_n$ satisfies Assumption A with $g_{2k,n}= p_n$ and $g_{2k}=p$ for all $k\geq 1$ and $t_n= \infty$.
Therefore, from Theorem \ref{thm:maingk} the ESD of $W_n$ converges weakly almost surely to 
$\mu^{\prime}$ say. Thus the LSD exists and (\ref{eq:inhomosparse}) holds. 

Now consider the special case where, $\int_{0}^1 p(x,y) \ dy = \lambda$. Then from \eqref{limit-ss2k}, each word in $SS_b(2k)$ contributes $\lambda^b$ to the $2k$th moment of $\mu^{\prime}$. Hence this moment is given by  
\begin{align*}
\beta_{2k}(\mu^{\prime})= \sum_{\pi \in SS(2k)} \lambda^{|\pi|}= \sum_{b=1}^k |SS_b(2k)|\lambda^b.
\end{align*}
  As these moments determine the distribution uniquely, we have $\mu^{\prime}=\mu_{bg}$ (the limit in the homogeneous case).
\end{proof}

  \begin{remark}
	We could of course start with numbers  $p_{i,j,n} \in [0,1]$, $1 \leq i \leq j \leq n$, for each fixed $n$. Then we can create a sequence of continuous functions 
$p_n$ on $[0,1]^2$ such that $$p_n(i/n,j/n)=p_{i,j,n} \ \ \ \ \ \text{ for every } 1 \leq i \leq j \leq n.$$

Now, assume that the functions $n p_n$ converge uniformly to the function $p$ on $[0,1]^2$. Then we can conclude the convergence of the ESD as discussed above.

It can be verified that the condition  $\int_{0}^1 p(x,y) \ dy = \lambda$ is equivalent to the condition $$\underset{i}{\sup} |\sum_{j=1}^n p_{ij,n}-\lambda| \longrightarrow 0.$$
\end{remark}

  \subsection{Result \ref{result:Zakharevich}: triangular IID, colored rooted trees and $SS(2k)$}\label{triangular iid}
 Consider the set up of Result \ref{result:Zakharevich}. It is easy to see that $W_n$ satisfies assumptions of Corollary \ref{thm:main}. Hence the ESD of $W_n$ converges to $\mu$ almost surely. Since this almost sure LSD must agree with the limit of the EESD stated in Result \ref{result:Zakharevich}, we wish to explore the relation between our moment formula and that given in \citep{zakharevich2006generalization} in terms of certain colored rooted trees.

A colored rooted tree is a graph with no cycles with one distinguished vertex as the root, and each vertex has a colour that signifies certain properties. 

The next lemma claims that these colored rooted trees and our special symmetric partitions $SS(2k)$ are in one-to-one correspondence. As a consequence, the limiting moments derived in the proof of Corollary \ref{thm:main} are the same as the one derived in Theorem 2 of \citep{zakharevich2006generalization}. We shall also see how the construction of such trees help us to extend the idea of graph homomorphism densities and thereby to generalize  Result \ref{result:Zhu}.  		
	
\begin{lemma}\label{lem:wig3}
Suppose $\boldsymbol{\omega}\in SS_b(2k)$. Also suppose that each letter appears $2k_1,2k_2,\ldots,2k_b$ times respectively in $\boldsymbol{\omega}$. Then there is a coloured rooted tree corresponding to $\boldsymbol{\omega}$ with $(k+1)$ vertices and $(b+1)$ distinct colours $a_0,a_1,\ldots,a_b$ with the following properties:
\begin{enumerate}
\item [(a)] The root is of  colour $a_0$ and there are exactly $k_i$ vertices of colour $a_i$, $1 \leq i \leq b$.
\item [(b)] If two vertices are of same colour then their parents are also of same colour.
\item [(c)] vertices with the same colour are at the same distance from the root. 
\end{enumerate}
Also, for every such tree with $k+1$ vertices and $b+1$ distinct colours there is a unique word that belongs to $SS_b(2k)$ and vice-versa.

In particular, the \textit{Catalan number} $\{C_{k}\}_{k \geq 1}$ counts the number of coloured rooted trees with $k+1$ vetrices, each vertex with a distinct colour. 
\end{lemma}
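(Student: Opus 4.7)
My plan is to construct the tree from $\boldsymbol{\omega}$ by a walk procedure driven by the parity of occurrences, and then prove the stated properties by strong induction on the number of distinct letters $b$, exploiting the recursive structure of $SS(2k)$ recorded just after the proof of Lemma \ref{lem:wig2}. Specifically, starting from a root $r$ coloured $a_0$, I will scan $\boldsymbol{\omega}$ from left to right while maintaining a ``current'' tree vertex: if the $i$th symbol is the $m$th overall occurrence of the $l$th new letter, I move \emph{down} to a freshly created child of the current vertex coloured $a_l$ when $m$ is odd, and \emph{up} to the parent of the current vertex when $m$ is even. Because each letter of $\boldsymbol{\omega}\in SS(2k)$ appears an even number of times, downs and ups balance, and exactly $k_l$ vertices of colour $a_l$ are born, giving $k+1$ vertices in total and establishing (a) immediately.

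For the well-definedness of the walk and properties (b), (c), I induct on $b$. The base case $b=1$ has $\boldsymbol{\omega}=\underbrace{a\cdots a}_{2k_1}$ and produces a star with $k_1$ leaves of colour $a_1$, for which every claim is trivial. For the inductive step, let $x_b$ be the last new letter and $\boldsymbol{\omega}'$ the reduced word obtained by deleting every occurrence of $x_b$. By the remark following the proof of Lemma \ref{lem:wig2}, $\boldsymbol{\omega}' \in SS_{b-1}(2j)$ for some $j<k$, and the inductive hypothesis furnishes a coloured rooted tree $T'$ with $j+1$ vertices satisfying (a)--(c). The original word $\boldsymbol{\omega}$ is recovered from $\boldsymbol{\omega}'$ by re-inserting pure even $x_b$-blocks at certain positions; at an $x_b$-block of size $2m_s$ the walk creates $m_s$ new children of the current vertex $w_s$, all coloured $a_b$, and returns to $w_s$. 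The crux is to show that the vertices $w_1,w_2,\ldots$ at the start of successive $x_b$-blocks all share a single colour. Here I invoke property (iii) from Lemma \ref{lem:wig1}, which forces the segment between any two consecutive $x_b$-occurrences to be closed (each other letter appears an even number of times, equally split between odd and even positions), and hence induces a closed sub-walk on $T'$ that leaves the current vertex unchanged between blocks. Combined with the inductive properties of $T'$, this yields (b), (c) and also rules out any attempt to step up from the root.

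For the inverse direction, given a coloured rooted plane tree $T$ with $k+1$ vertices, $b+1$ colours and properties (a)--(c), I will traverse $T$ in the canonical DFS that visits children in their left-to-right order; on descending to a child of colour $a_l$, the walk emits the letter associated with $a_l$, and on the matching ascent it emits the same letter. Properties (b) and (c) guarantee that each letter's emissions alternate down/up, that every pure block of the last new letter has even size, and that sub-words between consecutive occurrences of any letter correspond to closed sub-walks; together these force the emitted word to lie in $SS_b(2k)$, and the two constructions are manifestly mutual inverses. The final sentence of the lemma then falls out by specializing to $b=k$: each $k_i=1$ forces the colouring condition to become vacuous, recovering the classical bijection between plane trees with $k+1$ vertices and non-crossing pair-partitions of $[2k]$, counted by $C_k$.

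The principal obstacle is the promotion of property (iii) of $\boldsymbol{\omega}$ to the statement that the inserted $x_b$-blocks all open at same-coloured vertices of the inductively constructed subtree. I would isolate this as a small standalone claim, since it is the only place where the intricate balance condition in the definition of $SS$-words interacts nontrivially with the tree recursion; after that, the remainder of the argument reduces to careful but routine bookkeeping.
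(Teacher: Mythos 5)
Your construction is essentially the paper's: for a word in $SS_b(2k)$ every two successive occurrences of a letter are in the $(C2)$ relation (Lemma \ref{lem:wig1}(ii)), so the odd-numbered occurrences are exactly those in $(C1)$ relation with the first appearance, and your parity-driven walk (down to a fresh child on odd occurrences, up to the parent on even ones) produces the same tree that the paper builds by attaching a child for each $(C1)$-occurrence and ignoring the $(C2)$-occurrences; the converse via depth-first traversal is identical. The organizational difference is that you establish well-definedness and properties (b), (c) by induction on $b$, peeling off the last new letter and re-inserting its pure even blocks, whereas the paper argues directly from the distinctness of the $b+1$ partition blocks $\mathcal{E}_{i_0},\ldots,\mathcal{E}_{i_b}$ guaranteed by Lemma \ref{lem:wig2}; the latter gives the key fact (all occurrences of a given letter open at vertices representing one and the same partition block, hence of one colour) in one line. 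One caution about the claim you defer: ``each intermediate letter appears an even number of times, equally often in odd and even positions'' does \emph{not} by itself force the sub-walk between two successive occurrences of a letter to be closed --- an up-step followed by a down-step is balanced but lands at a sibling, not at the starting vertex. What you actually need is the stronger fact, established inside the paper's proof of Lemma \ref{lem:wig1}(iii), that the segment between two successive occurrences reduces to the empty word by repeatedly deleting adjacent $(C2)$-matched pairs; that reducibility is precisely the statement that the sub-walk is a well-nested excursion returning to its starting vertex. If you phrase your standalone claim in terms of that reduction (or simply import the partition-block distinctness from Lemma \ref{lem:wig2}, as the paper does), the rest of your argument, including the specialization to $b=k$ for the Catalan count, goes through.
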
 
\begin{proof}
Suppose $\boldsymbol{\omega}\in SS_b(2k)$ such that each letter appears $2k_1,2k_2,\ldots,2k_b$ times respectively. Let $i_1,i_2,\ldots,i_b$ be the positions where the distinct  letters made their first appearance in $\boldsymbol{\omega}$. We have already seen in the proof of Lemma \ref{lem:wig2} that for such a word $\boldsymbol{\omega}$ there are exactly $b$ distinct partition block associated to each distinct  letter (containing its generating vertex) and one partition block for $\mathcal{E}_{i_0}$ which contains $\pi(0)$. We assign the colour $a_j$ to the partition block $\mathcal{E}_{i_j}$ for $1 \leq j \leq b$. Therefore, we have $(b+1)$ distinct colours and $(b+1)$ distinct partition blocks. Now we begin constructing a tree from left to right.

Create a root and colour it $a_0$. The first letter in $\boldsymbol{\omega}$ has coordinates $(\pi(0),\pi(1))$. For this we create a child of the root and colour it $a_1$. Then for each successive appearance of this letter which are in $(C1)$ relation with the first appearance, we place a further child (of colour $a_1$), moving from left to right. The letters which are in $(C2)$ relation with the first appearance, are ignored. 

 When the second distinct letter of $\boldsymbol{\omega}$ appears at $(\pi(i_2-1),\pi(i_2))$ position for the first time, there are two possibilities: $\pi(i_2-1)=\pi(0)$ or $\pi(i_2-1)=\pi(1)$. In the first case, we create a further child of the root, place it to the right of the previous children and colour it $a_2$. In the second case, we create a  child for the rightmost $a_1$-colored child and colour it $a_2$. Then for each successive appearance of the letter, if it obeys the $(C1)$ relation with the first appearance,  we create a child with colour $a_2$ for the same $a_1$ colored child, using the same principle. We continue inductively with all other letters in the same manner until all letters are exhausted. Clearly, by construction this gives us a tree with a root of colour $a_0$.


For example consider the word $aaabbacc$. Then $k=4$ and $b=3$. Hence we obtain a tree with $5$ vertices and four colors is as follows:
\begin{center}
\begin{tikzpicture}
     [sibling distance=5em,scale=.9,auto=center,every node/.style={circle,fill=blue!20}] 
    \node {$a_0$}
    child{node {$a_1$}}
    child{node {$a_1$}
    child{node {$a_2$}}
   }
   child{node {$a_3$}}
   ;
    
    \end{tikzpicture}

\end{center}
 Now if $x$ is the $j$th distinct letter of $\boldsymbol {\omega}$, then it appears $2k_j$ times in $\boldsymbol {\omega}$ out of which $k_j$ times the coordinates of $x$ are in $(C1)$ relation with its first appearance (including the first appearance). So there are $k_j$ vertices of the colour $a_j$. So property (a) holds for this tree.


Suppose (b) does not hold for the tree constructed. Then there are two vertices of the same colour say, $a_m$ such that their parents are of different colours, say $a_{p_1}$ and $a_{p_2}$ ($p_1 \neq p_2$). Also these two vertices of colour $a_m$ correspond to the appearance of the $m$-th new letter in positions say, $(\pi(s-1),\pi(s))$ and $(\pi(t-1),\pi(t))$ ($s \neq t$) in  $\boldsymbol {\omega}$ where $\pi(s)=\pi(t)=\pi(i_m)$. By the construction process, we get $\pi(s-1)=\pi(i_{p_1})$ and $\pi(t-1)=\pi(i_{p_2})$. But we know that $\pi(s-1)=\pi(t-1)$. Therefore the two partition blocks $\mathcal{E}_{i_{p_1}}$ and  $\mathcal{E}_{i_{p_2}}$ coincide which cannot be true by Lemma \ref{lem:wig2}. So there cannot be such vertices of same colour with parents of different colours. That is, (b) is true for $\boldsymbol {\omega}$.

Now, vertices of colour $a_1$ can appear only as chlidren of the root according to the construction of the tree. Suppose, property (c) is true for all colours $a_i$ where $1\leq i \leq j-1$. Now there are $k_j$ vertices of the colour $a_j$. By the construction we see that either all of these $k_j$ vertices appear as children of the root or they appear as children of the vertices of the colour $a_t$ where $t<j$. If all $k_j$ vertices are children of the root, we have nothing to prove and (c) holds for the colour $a_j$. In the other case, all of these $k_j$ vertices  appear as children of the vertices of the colour $a_t$ and as $t<j$ all vertices of colour $a_t$ are at the same distance from the root. Therefore, all vertices of colour $a_j$ also are at the same distance from the root. Hence by induction, we have that (c) is true for the tree corresponding to $\boldsymbol {\omega}$. 
Thus for any word $\boldsymbol {\omega}\in SS_b(2k)$, the $(k+1)$ vertices and $(b+1)$ distinct colours satisfy properties (a), (b) and (c).

Further, for any two distinct words $\boldsymbol {\omega}_1$ and $\boldsymbol {\omega}_2$  in $SS_b(2k)$, the above process of construction yield two distinct coloured rooted tree with $(k+1)$ vertices and $(b+1)$ distinct colours each with properties (a), (b) and (c).

Now, suppose we have a  coloured rooted tree with $(k+1)$ vertices and $(b+1)$ distinct colours with properties (a), (b) and (c). We need to show that there is a word in $SS_b(2k)$ corresponding to this tree. 

As there are $(b+1)$ distinct colours $a_0,a_1,a_2,\ldots,a_b$, we can associate to each colour $a_j$ a generating vertex $\pi(i_j)$ (and hence to the partition block $\mathcal{E}_{i_j}$) for $1 \leq j \leq b$ and we associate $a_0$ with $\pi(0)$ and hence the partition block $\mathcal{E}_{i_0}$. We traverse the tree from left to right in the depth-first way starting at the root. 
For every step downward we get a vertex of colour $a_j$, we add the $j$th distinct letter to the word and for every step upward to a vertex of colour $a_t$ we add  the $t$-th distinct letter to the word. We repeat this process for all the branches of the tree, left to right.

The first vertex appearing after the root is of colour $a_1$ which creates the letter $a$. 

If there are no further children of this vertex then we  come back to $a_0$ (root) and add the letter $a$ to obtain the partial word $aa$.  We then move to the next right branch. 

If instead $a_1$ has at least one child, then we traverse one step down, to the next vertex which is given the color $a_2$ and we add the second distinct letter to $a$ to obtain the partial word $ab$. 

We repeat the above process on all branches, always moving depth first and left to right. Note that eventually we traverse upward on the rightmost branch and come back to the root. In this process we end up with a word  $\boldsymbol {\omega}$ of length $2k$ since each edge has been traversed twice. Also there are $b$ distinct letters as for each of the colours $a_1,a_2,\ldots,a_b$, a distinct letter has been added to form the word. 
We now wish to verify that $\boldsymbol {\omega}\in SS_b(2k)$. 
So we need to verify (i), (ii) and (iii) of Lemma \ref{lem:wig1}.

Due to the construction, among all the vertices of color $a_b$,   the first one corresponds to the first appearance of the last distinct letter of $\boldsymbol {\omega}$. Clearly, no vertex of  colour $a_b$ has any children. So whenever we go downward to a vertex of colour $a_b$, in the next step we must come back to its parent vertex. Hence property (i) of Lemma \ref{lem:wig1} is satisfied.
 
We now verify property (iii).  Consider two successive appearances of the same letter say, $x$. If they are side by side, there is nothing to verify. 
 
Now suppose they have some other letters in between. Suppose $x$ is the $j$th distinct  letter of $\boldsymbol {\omega}$. Then there are two ways in which we have got the first $x$: (a) the first $x$ appeared while we were going down to the vertex of colour $a_j$ which has a further child or (b) while we were coming upward from a vertex of colour $a_j$. 
 
In case (a), during the construction, we had gone down that branch and reached the end of this sub-tree and each time added a letter to the word for each vertex. Having reached the end, we had started coming upward and added those letters in the reverse order to the word to reach the vertex of colour $a_j$ we started with. Because of property (c) we cannot get the next $x$ before we reach this vertex. Hence we see that in between these two successive $x$'s each letter has been added even number of times. Therefore we have property (iii) of Lemma \ref{lem:wig1} for $\boldsymbol {\omega}$. 
 
In case (b), we have got the first $x$ while coming upward from a vertex of colour $a_j$ (to a vertex of color $a_t$ say). Observe from the properties (b) and (c) of the tree that the next $x$ can only occur while going downward to another vertex of colour $a_j$ whose parent must have the  colour $a_t$. By property (c), same coloured vertices occur only at the same level of the tree. So to come to this next $x$, we have to keep traversing the tree from left to right until we reach a vertex of colour $a_j$ from $a_t$. Clearly in this process we pass each intermediate vertex exactly two times, once going up and once going down. Hence as before each letter can appear only even number of times in between these two successive $x$'s. Moreover, since each pair of intermediate letters also satisfies this property, it is easy to infer that the two $x$'s are in $(C2)$ relation. 

 Therefore, $\boldsymbol {\omega}\in SS_b(2k)$. Finally, it is also clear that different colored rooted trees yield different words. 
 
Now it is clear from above that each Catalan word corresponds to a colored rooted tree with $k+1$ vertices and $k+1$ distinct colours. Hence the Catalan number counts the number of colored rooted tree with $k+1$ vertices each vertex with a distinct colour.

 This completes the proof of the lemma.
\end{proof}
Since Lemma \ref{lem:wig3} provides a description of $SS(2k)$ in terms of colored rooted trees, 
we can express the $2k$th moment of the LSD of $W_n$ 
in terms of these trees. Let $T_{2k}^b(k_1,k_2,\ldots,k_b)$ denote the number of coloured rooted trees  with $(k+1)$ vertices and $(b+1)$ distinct colours which satisfy properties (a), (b) and (c) of Lemma \ref{lem:wig3}.
 
Therefore, under the assumption of  Corollary \ref{thm:main}  the $2k$th moment of the LSD of $W_n$ is given by
\begin{align*}
\displaystyle\lim_{n \rightarrow \infty}\frac{1}{n}\mathbb{E}[\Tr(W_n^{2k})]
  = \displaystyle \sum_{\sigma \in SS(2k)} C_{\sigma}  
  = \displaystyle \sum_{b=1}^k \sum _{k_1+k_2+\cdots+k_b=k}T_{2k}^b(k_1,k_2,\ldots,k_b)\prod_{i=1}^b C_{2k_i}.
\end{align*}
Thus we obtain Result \ref{result:Zakharevich} from Theorem \ref{thm:maingk}.

\subsection{Result \ref{result:Zhu}: graphons and beyond}
\label{graphons}

Theorem 3.2 of  \citep{zhu2020graphon}, stated as Result \ref{result:Zhu} earlier, considers   Wigner matrix with independent entries $\{a_{ij}\}$ (subject to symmetry) having mean zero and $\mathbb{E}[|a_{ij}|^2]=s_{ij}$ whose distribution do not depend on $n$. In proving the convergence of the ESD the author used the concept of graphons and homomorphism density to express the limiting moments. 
  First we prove a more general result using Lemma \ref{lem:wig3}.  We connect the special symmetric words and hence the moments of the limiting distribution of Theorem \ref{thm:maingk} with a more general class of trees (see Lemma \ref{lem:wig3} in Section \ref{triangular iid}). 

Let us generalize the concept of homomorphism density for the coloured rooted trees that appear in Lemma \ref{lem:wig3}. Define, 
for each $k$, a \textit{graphon sequence}  
$M_{2k,n}$ that takes the value $$n \mathbb{E}[x_{ij,n}^{2k}\boldsymbol{1}_{[|x_{ij,n}|\leq t_n]}]=g_{2k,n}(i/n,j/n)\ \ \text{ on }I_i \times I_j (1 \leq i,j \leq n )$$
where $t_n$ is a sequence as defined in Assumption A.

For each word in $SS(2k)$ with $b$ distinct letters, we have a coloured rooted tree $T^{\prime}$ as described in Lemma \ref{lem:wig3}. Denote its vertex set by $V=:\{0, \ldots k\}$, enumerated by first appearances, left to right and depth first. Each vertex is painted with a colour from the colour set $C=:\{c_0, \ldots , c_b\}$, say. 
Let $E$ be the edge set of $T^{\prime}$. Observe that there can be many edges whose vertices have a fixed pair of colours $c_i$ and $c_j$. 
Enumerate $E$ as follows:  
\begin{align*}
E=&\cup_{0\leq i < j\leq b}E(i,j)\\
E(i,j)=& \ \big\{(v_1, v_2) \in E:  v_1 < v_2\ \ \text{are coloured}\ \  c_i \ \ \text{and} \ \ c_j \ \ \text{respectively}\big\}, 0\leq i < j \leq b.
\end{align*}
 
The homomorphism density $t(T, H_n)$ in \eqref{zhu-density} is now extended to 
\textit{generalized homomorphism density} as 
\begin{align}\label{homomorphismdensity}
t(T^{\prime}, \{M_{2k,n}\}) = \int_{[0,1]^{b+1}} \underset{0\leq i<j\leq b} {\prod_{(i,j)\in E}} g_{2|E(i,j)|,n}(x_{i},x_j) \ \prod_{0 \leq i \leq b} dx_i.
\end{align}

Now we state and prove the following result:
\begin{result}\label{res:graphons}
Suppose $W_n$ is the $n \times n$ Wigner matrix with independent entries $\{x_{ij,n};i \leq j\}$ that satisfies \eqref{gkeven}, \eqref{gkodd} and \eqref{truncation}. Suppose that 
\begin{equation}\label{gen-homomorphism density}
t(T^{\prime}, \{M_{2k,n}\})\ \ \text{ converges for all coloured rooted trees } T^{\prime}.
\end{equation}   Then the ESD of $W_n$ converges weakly almost surely (or in probability) to a distribution whose odd moments are $0$ and the $2k$th moment is given by $$\sum_{T\ \ \text{colored rooted trees}}\lim t(T, \{M_{2k,n}\})$$
provided these moments determine a unique probability distribution.
\end{result}
\begin{proof}
In order to establish the first moment condition, first observe that the words that do not belong to the set $SS(2k)$, do not contribute to  the limiting $2k$th moment. Also from Lemma \ref{lem:wig3}, we know that each word in $SS_b(2k)$ corresponds to a coloured rooted tree with $b$ distinct colours. Hence the contribution for each such word (or tree) is $\displaystyle \lim_{n \rightarrow \infty} t(T, \{M_{2k,n}\})$ (as this limit exists). Thus we get the first moment condition.
The fourth moment condition can be verified in the same manner as in Step 2 of the proof of Theorem \ref{thm:maingk}. 

Finally, as  these moments determine a unique probability distribution, using Lemma \ref{lem:genmoment}, we conclude that the ESD of $W_n$ converges weakly almost surely (or in probability) to a symmetric  distribution $\mu$ whose odd moments are $0$ and the $2k$th moment is given by $$\beta_{2k}(\mu)=\sum_{T\ \ \text{colored rooted tree}}\lim t(T, \{M_{2k,n}\}).$$
\end{proof}

%
    

 \noindent Now we shall show how Result \ref{result:Zhu} follows from Result \ref{res:graphons}. Consider the matrix $\tilde{W}_n$ as defined in Result \ref{result:Zhu}. Thus the variables $\{a_{ij}\}$ as defined in Result \ref{result:Zhu} satisfy the three conditions  \eqref{boundedvar}, \eqref{lindeberg} and \eqref{treecondition} as in \citep{zhu2020graphon}. Note that there is a sequence $\eta_n$ decreasing to $0$ such that \eqref{lindeberg} continues to hold with $\eta$ replaced by $\eta_n$. Using the truncation step (step 1) in the proof of Theorem 2.9 in \citep{bai2010spectral}, it follows that it is enough to consider the entries of $\tilde{W}_n$ to be bounded by $\eta \sqrt{n}$. Now, $\lim t(T^{\prime}, \{M_{2k,n}\})=0$ for all trees with less than $k+1$ colours and $\lim t(T^{\prime}, \{M_{2,n}\})= \lim t(T^{\prime}, H_n)$ exists finitely for all trees with $k+1$ colours. Therefore from Result \ref{res:graphons}, we conclude that the almost sure LSD of $\tilde{W}_n$ exists and is equal to $\mu_{zhu}$. Further if $\lim\int M_{2,n}(x_1,x_2) \ dx_1 \to 1$, then the limit is semi-circular. 
 Thus we get Result \ref{result:Zhu} as a special case of Result \ref{res:graphons}.
 
%

%
\begin{remark}
Note that under Assumption A, for every coloured rooted tree $T^{\prime}$,
$$t(T^{\prime}, \{M_{2k,n}\}) \rightarrow 
\int_{[0,1]^{b+1}} \underset{0\leq i<j\leq b} {\prod_{(i,j)\in E}} g_{2|E(i,j)|}(x_{i},x_j) \ \prod_{0 \leq i \leq b} dx_i.$$
As seen in the proofs of Theorem \ref{thm:maingk} and Lemma \ref{lem:wig3}, these trees correspond to the words in $SS(2k)$ and thus the existence of the limits in the above equation implies their positive contribution in the limiting moments.
Note that the uniform convergence of $\{g_{2k,n}\}$ to $\{g_{2k}\}$ and their integrability in Assumption A is a sufficient condition for \eqref{gen-homomorphism density}.
\end{remark}
\subsection{Matrices with a variance profile}\label{variance prf}
\begin{result}\label{res:variance profile}
Consider a Wigner matrix with a variance profile $\sigma$, i.e. suppose the entries of the matrix $W_n$ are $\{y_{ij,n}=\sigma(i/n,j/n)x_{ij,n}; i\leq j\}$ where $\{x_{ij,n};i\leq j\}$ are i.i.d. for every fixed $n$  that satisfy the two conditions (i) and (ii) of Corollary \ref{thm:main} with $t_n=\infty$ and $\sigma$ is a bounded piecewise continuous function on $[0,1]^2$. 
 Then the ESD of $W_n$ converges weakly almost surely to a symmetric probability measure $\nu$ whose $2k$th moment is determined by $\sigma$ and $\{C_{2m}\}_{1\leq m\leq 2k}$. 
\end{result}
\begin{proof}
Indeed, observe that $y_{ij,n}$ satisfy Assumption A with $g_{2k}\equiv \sigma^{2k}C_{2k}$. 
 Thus using Theorem \ref{thm:maingk}, we conclude that the ESD of $W_n$ converges weakly almost surely to a symmetric probability measure $\nu$. 
 
 Now we give a description of the limiting moments. observe that, from Step 3 in the proof of Theorem \ref{thm:maingk}, for each word in $SS_{b}(2k)$ with each distinct letter appearing $s_1,s_2,\ldots,s_b$ times, its contribution to the limiting moments is (see \eqref{limit-ss2k})
$$\int_{[0,1]^{b+1}} \prod_{j=1}^b \sigma^{s_j}(x_{t_j},x_{l_j}) \ \prod_{i\in S} dx_i \prod_{j=1}^b C_{s_j}$$
where $(t_j,l_j)$ denotes the position of first appearance of the $j$th distinct letter in the word.

Hence the $2k$th moment of $\nu$ is given as follows:
\begin{align*}
\beta_{2k}= \displaystyle \sum_{b=1}^k \sum_{\pi \in SS_b(2k)}\int_{[0,1]^{b+1}} \prod_{j=1}^b \sigma^{s_j}(x_{t_j},x_{l_j}) \ \prod_{i\in S} dx_i \prod_{j=1}^b C_{s_j}.
\end{align*}
\end{proof}


\subsection{Band and Random Block matrices: some extensions}\label{band and block}
\subsubsection{Band matrices}\label{band}
In band matrices, the entries are non-zero only around the diagonal in the form of a band. As the dimension of the matrices increase, so does  the number of non-zero elements around the diagonal. Let $m_n$ be as sequence of positive integers such that $m_n \rightarrow \infty$ and $m_n/n \rightarrow \alpha>0$ as $n \rightarrow \infty$. Now we define two modes of banding--\textit{periodic banding} and \textit{non-periodic banding}.\\

\noindent \textbf{Periodic banding}: Periodic banding $W_n^b$ of $W_n$ is the symmetric matrix with entries $y_{ij,n}$ where for $m_n\leq n/2$, 
\begin{align}\label{band1}
y_{ij,n}= \begin{cases}
x_{ij,n} & \ \ \text{ if } |i-j|\leq m_n \ \ \text{ or }|i-j|\geq n- m_n\\
0   & \ \ \text{ otherwise.}
\end{cases}
\end{align}
\noindent \textbf{Non-periodic banding}: Non-periodic banding $W_n^B$ of $W_n$ is the symmetric matrix with entries $y_{ij,n}$ where
\begin{align}\label{band2}
y_{ij,n}= \begin{cases}
x_{ij,n} & \ \ \text{ if } |i-j|\leq m_n\\
0   & \ \ \text{ otherwise.}
\end{cases}
\end{align}
These types of band Wigner matrices have been dealt with in previous works for example in \citep{casati1993wigner} where the authors conjectured the convergence of the ESD of periodic and non-periodic band Wigner matrices to the semi-circular distribution in probability under the Lindeberg condition. Also such matrices were considered in \citep{molchanov1992limiting} where $m_n/n\rightarrow 0$. Later in \citep{casati1993generalized}, the authors considered a more generalized model for Wigner band matrices and showed the convergence of their ESD's in the almost sure sense.


\begin{result}\label{res:band1}
Suppose the random variables $\{x_{ij,n}\}$ satisfy assumption A. Then the ESD of $W_n^b$ (as described in \eqref{band1}) converges weakly almost surely to a symmetric probability measure $\mu_{\alpha}$ whose moments are determined by $\{g_{2k}\}$ and $\alpha= \lim_{n \rightarrow \infty}m_n/n$.
\end{result}
\begin{proof}
For every $n$, define the function $f_n$ on $[0,1]^2$ by 
\begin{align}
f_n(x,y)= \begin{cases}
1 & \ \ \text{ if }|x-y| \leq m_n/n \ \ \text{ or } \ \ |x-y| \geq 1-m_n/n\\
0 & \ \ \text{ otherwise.}
\end{cases}
\end{align}
Observe that the entries $y_{ij,n}$ of $W_n^b$ can be written as $f_{n}(i/n,j/n)x_{ij,n}$.

As $|f_n|\leq 1$, following the Steps 2 and 4 in the proof of Theorem \ref{thm:maingk} the fourth moment condition and Carleman's condition follows immediately. Next observe that $\int_{[0,1]^2}f_n(x,y)\ dxdy$ converges to $\int_{[0,1]^2}f(x,y) \ dxdy$ on $[0,1]^2$ where $f$ is defined on $[0,1]^2$ as follows:
 \begin{align}
 f(x,y)= \begin{cases}
 1 & \ \ \text{ if } |x-y|\leq \alpha \ \ \text{ or } |x-y| \geq 1- \alpha\\
0 & \ \ \text{ otherwise.}
 \end{cases}
 \end{align}
 So we have that $n \mathbb{E}[y_{ij,n}^{2k}]= f_n^{2k}(i/n,j/n)g_{2k,n}(i/n,j/n)$ and $f_ng_{2k,n}$ converges to $fg_{2k}$. 
 Now following the proof of Step 3 in Theorem \ref{thm:maingk}, we get that only $SS(2k)$ words contribute in the limiting moments and for each word in $SS(2k)$ with $b$ distinct letters, its contribution to the limiting moments is as follows:
$$\int_{[0,1]^{b+1}}\prod_{j=1}^b \bigg[g_{s_j}(x_{t_j},x_{l_j})\big[\boldsymbol {1}( |x_{t_j}-x_{l_j}|\geq 1-\alpha)+ \boldsymbol {1}( |x_{t_j}-x_{l_j}|\leq \alpha)\big]\bigg]\ dx_{t_1}dx_{l_1}\cdots dx_{l_b}.
$$ 
Hence the ESD of $W_n^b$ converges weakly almost surely to a symmetric probability measure $\mu_{\alpha}$.
\end{proof}
\begin{remark}
Suppose $\{x_{ij,n}; i\leq j\}$ in Result \ref{res:band1} are i.i.d for every fixed n and satisfies \eqref{gkeven} and \eqref{gkodd}. Then from the previous result we get that the ESD of $W_n^b$ converges weakly almost surely to a symmetric probability measure $\mu_\alpha$ whose $2k$th moment is given as follows:
\begin{align*}
\beta_{2k}(\mu_{\alpha})= \displaystyle \sum_{\pi \in SS(2k)}(2\alpha)^{|\pi|} C_{\pi}.
\end{align*}
Now if the entries of the matrix are $\{y_{ij,n}/\sqrt{n}\}$ where $\{y_{ij,n}\}$ are i.i.d  for all $n$ with finite mean and variance $\sigma^2$, then $C_2= \sigma^2$ and $C_{2k}=0$ for all $k \geq 2$. Therefore we have that the ESD of $W_n^b$ converges to a symmetric probability measure $\mu_\alpha$ whose $2k$th moment is given as follows:
\begin{align*}
\beta_{2k}(\mu_{\alpha})= \displaystyle \sum_{\pi \in SS_k(2k)}(2\alpha)^{|\pi|} \sigma^{2k}.
\end{align*}
Hence $\mu_{\alpha}$ in this case is the semicircular distribution with variance $2\alpha \sigma^2$. Hence we can conclude the convergence in Theorem 4 of \citep{casati1993wigner}, which happens in the almost sure sense.
\end{remark}

\begin{result}\label{res:band2}
Suppose the random variables $\{x_{ij,n}\}$ satisfy assumption A. Then the ESD of $W_n^B$ (as described in \eqref{band2}) converges weakly almost surely to a symmetric probability measure $\mu_{\alpha}$ whose moments are determined by $g_{2k}$ and $\alpha= \lim_{n \rightarrow \infty}m_n/n$.
\end{result}
\begin{proof}
The proof is very similar to the proof of Result \ref{res:band1}. In this case observe that the function $f_n$ on $[0,1]^2$ is given by 
\begin{align}
f_n(x,y)= \begin{cases}
1 & \ \ \text{ if }|x-y| \leq m_n/n\\
0 & \ \ \text{ otherwise}
\end{cases}
\end{align}

 and $\int_{[0,1]^2}f_n(x,y) \ dxdy$ converges to $\int_{[0,1]^2}f(x,y) \ dxdy$ on $[0,1]^2$ where $f$ is defined on $[0,1]^2$ as follows:
 \begin{align}
 f(x,y)= \begin{cases}
 1 & \ \ \text{ if }|x-y| \leq \alpha\\
0 & \ \ \text{ otherwise.}
 \end{cases}
 \end{align}
  Now in a similar manner as the proof of Result \ref{res:band1}, we get that only $SS(2k)$ words contribute in the limiting moments and for each word in $SS(2k)$ with $b$ distinct letters, its contribution to the limiting moments is 
$\int_{[0,1]^{b+1}}\prod_{j=1}^b \big[g_{s_j}(x_{t_j},x_{l_j})\boldsymbol {1}(|x_{t_j}-x_{l_j}|\leq \alpha)\ dx_{t_1}dx_{l_1}\cdots dx_{l_b}.
$ 
 
Therefore the ESD of $W_n^B$ converges weakly almost surely to a symmetric probability measure $\mu_{\alpha}$.
\end{proof}

\subsubsection{Random Block matrices}\label{block}

Random block matrices with finite number of rectangular blocks have been studied in \citep{bolla2004distribution}, \citep{ding2014spectral} and \citep{zhu2020graphon}. In \citep{zhu2020graphon}, the graphon approach has been used to prove the results on  random block matrices. We prove a general result on block matrices by the methods of proof of Theorem \ref{thm:maingk}. Theorem 6.1 in \citep{zhu2020graphon} follows from it. 
\vspace{3mm}

\noindent 
Consider the symmetric matrix $W_n^{\prime}$ consisting of $d^2$ rectangular blocks, $W_n^{\prime (m,l)}, 1\leq m, l \leq d$. We write $W_n^{\prime}= \displaystyle \sum_{m,l} E_{ml} \otimes W_n^{\prime (m,l)}$ where $\otimes$ is the Kronecker product  of matrices and $E_{ml}$ are $d \times d$ elementary matrices with entry 1 at the $(m,l)$th position and $0$ otherwise. \\

\noindent \textbf{Assumption B}: 
\begin{enumerate}
\item[(i)] The blocks $W_n^{\prime (m,l)}, 1\leq m\leq l \leq d$ are $n_m\times n_l$ rectangular random matrices with i.i.d. entries  (inside each block) but independent across blocks, subject to symmetry. 

\item[(ii)]  Also suppose the entries $x_{ij,n}$ have all moments finite. For each $k \in \mathbb{N}$ and $1\leq m\leq l \leq d$,
	 \begin{eqnarray}
 \lim_{n \rightarrow \infty} n \ \mathbb{E}\left[x_{ij,n}^{2k}\right]&=:&C_{2k}^{(m,l)} < \infty  \ \ \text{whenever } x_{ij,n} \ \ \text{is in the} \ (m,l)\text{th block}, \label{ckeven-b}\\
 \lim_{n \rightarrow \infty} n^{\alpha} \  \mathbb{E}\left[x_{ij,n}^{2k-1}\right] &=& 0 \ \ \text{for any}\ \  \alpha < 1.\label{ckodd-b}
\end{eqnarray}	
\end{enumerate}

\begin{result}\label{res:block}
Let $\lim_{n \rightarrow \infty} \frac{n_m}{n} = \alpha_m>0, 1 \leq m \leq d$. 
Suppose $W_n^{\prime}$ is the $n \times n$ symmetric random block matrix such that Assumption B holds. Then the ESD of $W_n^{\prime}$ converges weakly almost surely to a symmetric probability measure $\tilde{\mu}$ whose moments are determined by $(C_{2k}^{(m,l)})_{k\geq 1}$ and $(\alpha_m)_{m=1}^d$.
\end{result}
\begin{proof}
The proof follows in a manner very similar to the proof of Theorem \ref{thm:maingk}. We omit the details and give only the outline of the proof to verify the first moment condition.

Let $C_{2k,n}^{(m,l)}=   n \ \mathbb{E}\left[x_{ij,n}^{2k}\right]  \ \ \text{ whenever } x_{ij,n} \ \ \text{ is in the } (m,l)\text{th block}$. Also let $n_0=0$ and $\alpha_0=0$. Observe that in this case the sequence of functions $g_{2k,n}$ are given as follows : 
$$g_{2k,n}(x,y)= C_{2k,n}^{(m,l)} \ \ \text{ when } (x,y)\in \bigg[\sum_{t=0}^{m-1}n_t/n, \sum_{t=0}^{m}n_t/n \bigg] \times \bigg[\sum_{t=0}^{l-1}n_t/n, \sum_{t=0}^{l}n_t/n\bigg].$$

This converges to $g_{2k}$ which is defined as: 
$$g_{2k}(x,y)= C_{2k}^{(m,l)} \ \ \text{ when } (x,y)\in \bigg[\sum_{t=0}^{m-1}\alpha_t, \sum_{t=0}^{m}\alpha_t \bigg] \times \bigg[\sum_{t=0}^{l-1}\alpha_t, \sum_{t=0}^{l} \alpha_t\bigg].$$

 Now following the same arguments as in the proof of Step 3 in Theorem \ref{thm:maingk} with the above functions $\{g_{2k}\}_{k \geq 1}$, we find  that the limiting moments are determined by $(C_{2k}^{(m,l)})_{k\geq 1}$ and $(\alpha_m)_{m=1}^d$. Hence the first moment condition can be verified.
\end{proof}

Now suppose the entries of $W_n^{\prime}$ are $\frac{x_{ij}}{\sqrt{n}}$ and conditions (1)--(4) of Section 6 in \citep{zhu2020graphon} hold. Then it can be shown easily that only words with $k$ distinct letters contribute to the limiting $2k$th moment. Moreover, the words that do not belong to $SS(2k)$ contribute 0 in the limit. As $SS_k(2k)$ is the collection of all \textit{Catalan words}, only these contribute to the limiting moments. Hence we get Theorem 6.1 of \citep{zhu2020graphon} as a special case of Theorem \ref{thm:maingk}.
\subsection{Simulations}\label{simulations}
Figure \ref{fig:wigfig2} provides the simulated the ESD for some choices of the matrices. 
\begin{figure}[htp]
\begin{subfigure}{.5\textwidth}
  \centering
  \includegraphics[width=.7\linewidth]{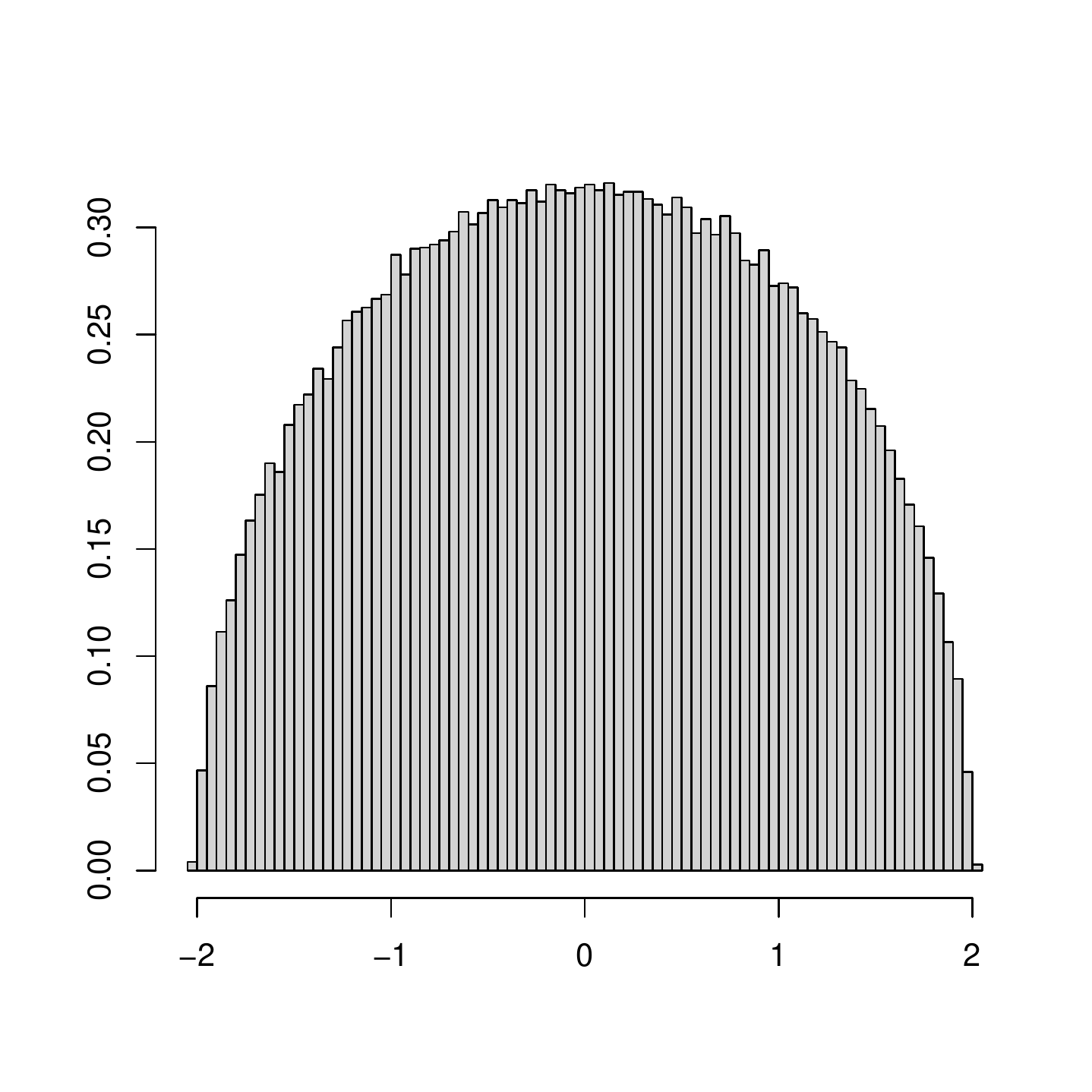}  
  \caption{Input is i.i.d $N(0,1)/\sqrt{n}$.}
\end{subfigure}
\begin{subfigure}{.5\textwidth}
  \centering
  \includegraphics[width=.7\linewidth]{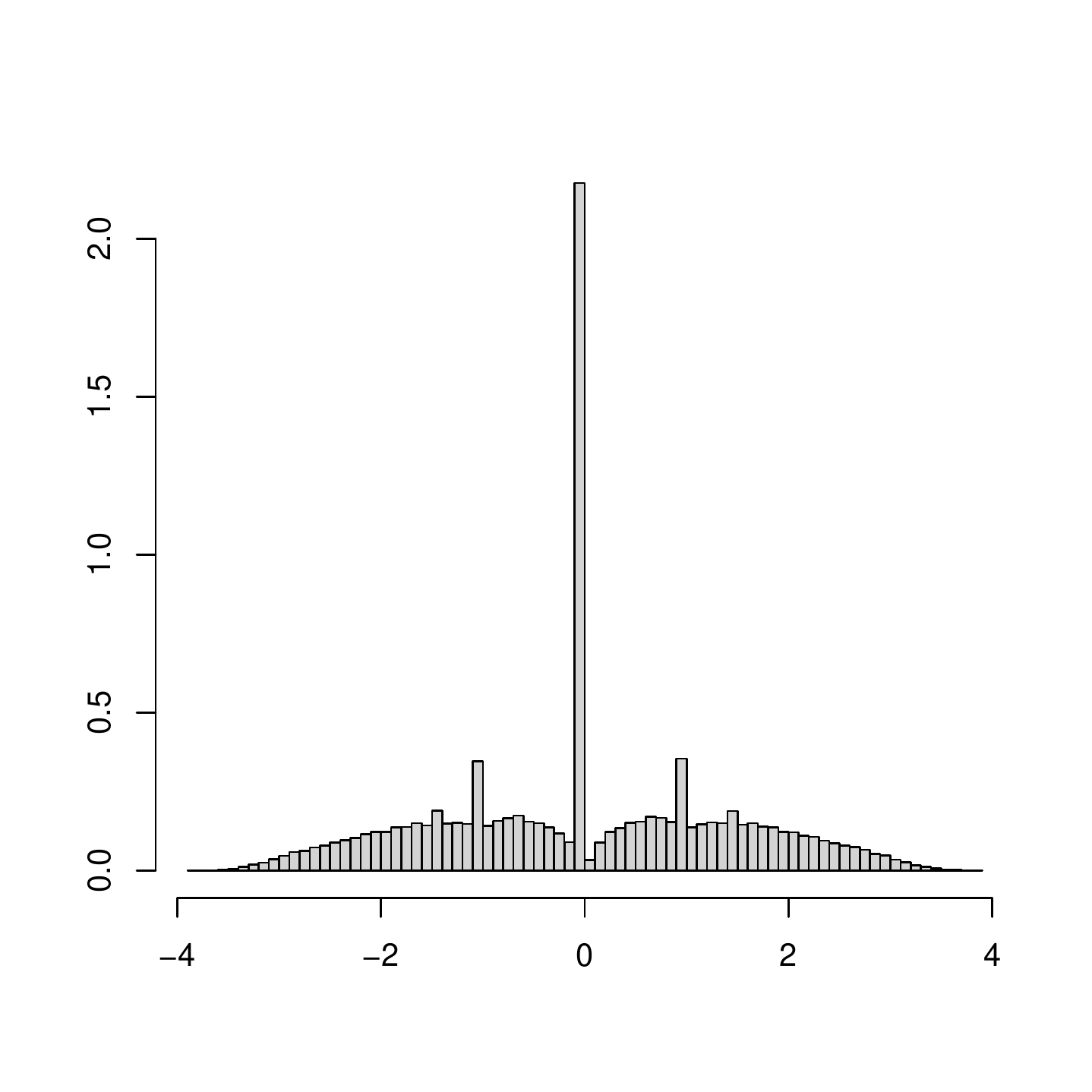}  
  \caption{Input is i.i.d Ber$(2/n)$ for every $n$.}
\end{subfigure}
\\
\begin{subfigure}{.5\textwidth}
  \centering
  \includegraphics[width=.7\linewidth]{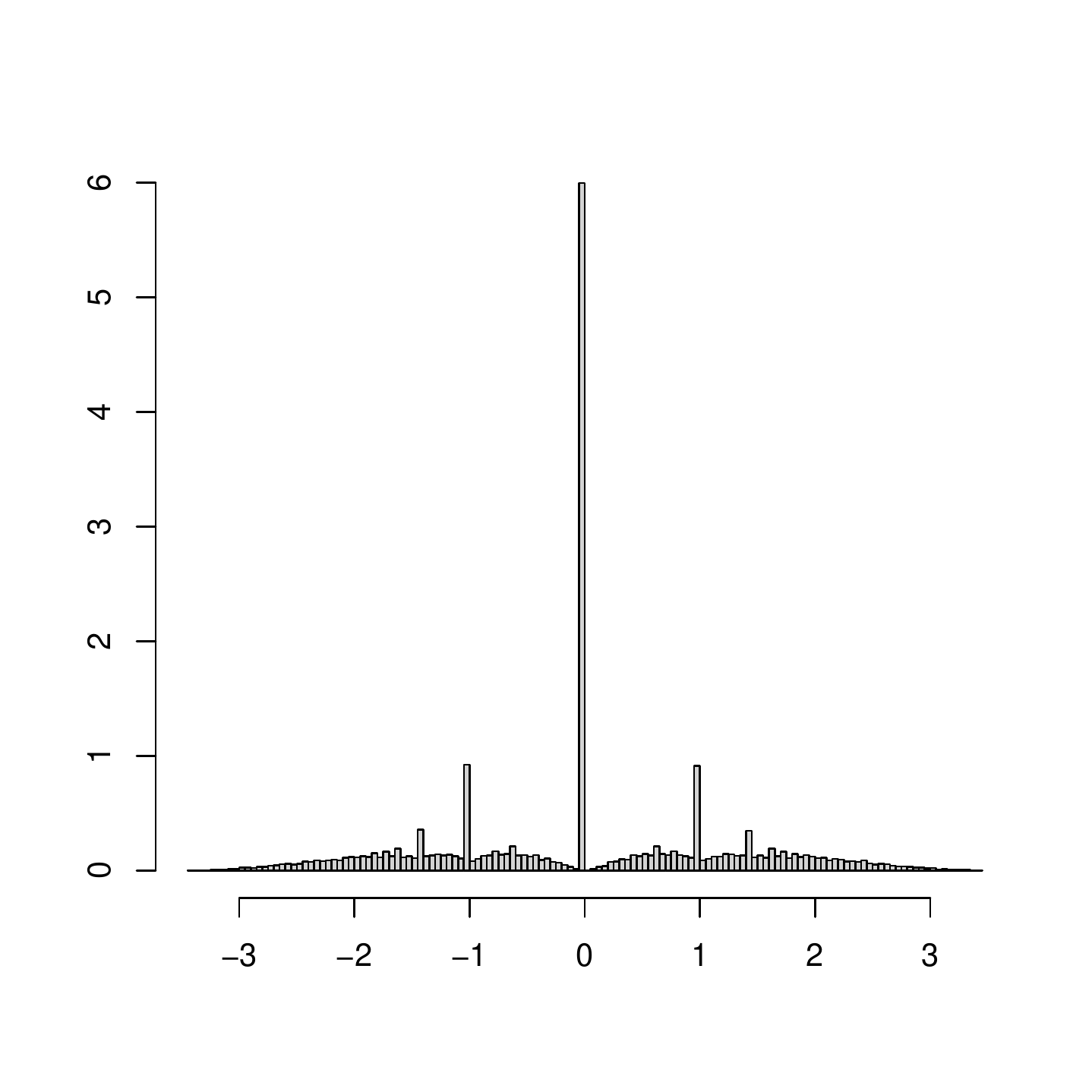}  
  \caption{Input is $x_{ij,n}= \mbox{Ber} (\sin(\frac{\pi(i+j)}{n^2}))$ for every $n$.}
\end{subfigure}
\begin{subfigure}{.5\textwidth}
 \centering
  \includegraphics[width=.7\linewidth]{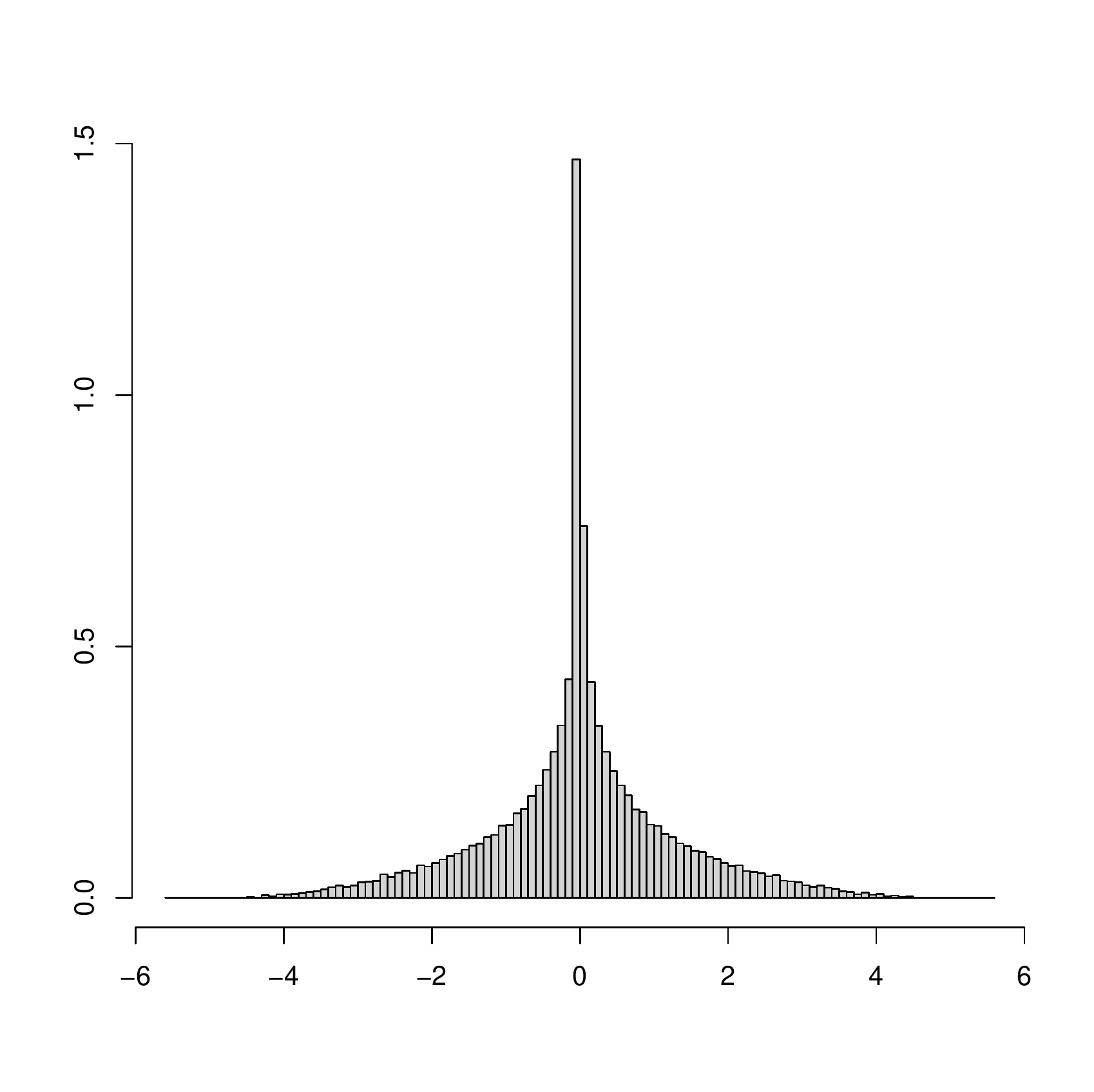}  
  \caption{Input is $x_{ij,n}= \frac{(i+j)^2}{2n^2}\mbox{Ber}(3/n)$ for every $n$.}
\end{subfigure}
\caption{Histogram of the eigenvalues of $W_n$ for $n=1000, 30$ replications.}
\label{fig:wigfig2}
\end{figure}
\vspace{5mm}


\providecommand{\bysame}{\leavevmode\hbox to3em{\hrulefill}\thinspace}
\providecommand{\MR}{\relax\ifhmode\unskip\space\fi MR }
\providecommand{\MRhref}[2]{%
  \href{http://www.ams.org/mathscinet-getitem?mr=#1}{#2}
}
\providecommand{\href}[2]{#2}

%

\end{document}